\theoremstyle{plain}
\newtheorem{theorem}{Theorem}[section]
\newtheorem{exmp}[theorem]{Example}
\newtheorem{remark}[theorem]{Remark}
\newtheorem{thm-def}[theorem]{Theorem/Definition}
\newtheorem{definition}[theorem]{Definition}
\newtheorem{proposition}[theorem]{Proposition}
\newtheorem{lemma}[theorem]{Lemma}
\newtheorem{corollary}[theorem]{Corollary}
\def\nota#1{\relax}
\def\Projan{\operatorname{Projan}}%{\mathop{\rm Projan}}
\begin{document}\baselineskip=1.25em

%\author[T. Gaffney]{Terence Gaffney}
%\thanks{T.~Gaffney was partially supported by PVE-CNPq Proc. 401565/2014-9}
 \address{T. Gaffney, Department of Mathematics\\
  Northeastern University\\
  Boston, MA 02215\\
  t.gaffney@neu.edu}
  
  %\author[N. Grulha]{Nivaldo G. Grulha, Jr.}
 % \thanks{N.~Grulha was partially supported by FAPESP Proc. 2015/16746-7 and CNPq Proc. 474289/2013-3 and 303641/2013-4 }
  \address {N. Grulha\\
    Instituto de Ci\^encias Matem\'aticas e de Computa\c{c}\~ao - USP\\
   Av. Trabalhador s\~ao-carlense, 400 - Centro\\
  CEP: 13566-590 - S\~ao Carlos - SP, Brazil\\
  njunior@icmc.usp.br}
  
  %\author[M. A. Ruas]{Maria Aparecida Ruas}
  %\thanks{M.A.~Ruas was partially supported by FAPESP Proc. 2014/00304-2 and CNPq Proc. 305651/2011-0}
\address {M. Ruas\\
  Instituto de Ci\^encias Matem\'aticas e de Computa\c{c}\~ao - USP\\
 Av. Trabalhador s\~ao-carlense, 400 - Centro\\
CEP: 13566-590 - S\~ao Carlos - SP, Brazil\\
maasruas@icmc.usp.br}

\title[The local Euler obstruction and topology]{The local Euler obstruction and  topology of the stabilization of associated determinantal varieties}
\author{Terence Gaffney,  Nivaldo G.\  Grulha Jr.\  and Maria A.\  S.\  Ruas}
\let\thefootnote\relax\footnote{2010 Mathematics Subject Classification. Primary  14C17, 32S15, 55S35; Secondary 14J17, 58K05, 32S60.

Key words and phrases. Local Euler obstruction, Chern-Schwartz-MacPherson class, generic determinantal varieties, essentially isolated determinantal singularity, stabilization. }
%\address{Terence Gaffney, Department of Mathematics,\newline Northeastern University, Boston, MA 02215, USA.}
%\email{t.gaffney@northeastern.edu}

%\address{ Nivaldo G. Grulha Jr and Maria Aparecida Soares Ruas, \newline Universidade de S\~{a}o Paulo, Instituto de Ci\^{e}ncias Matem\'{a}ticas e de Computa\c{c}\~{a}o, Departamento de Matem\'{a}tica.\\  Av. Trabalhador S\~{a}o-carlense, 400 - Centro, Caixa Postal: 668 - CEP: 13560-970 - S\~{a}o Carlos - SP - Brazil.}
%\email{njunior@icmc.usp.br}
%\email{maasruas@icmc.usp.br}

\maketitle
	
\begin{abstract}
This work  has two complementary parts, in the first part we compute the local Euler obstruction of generic determinantal varieties and apply this result to compute the Chern--Schwartz--MacPherson class of such varieties. In the second part  we compute the Euler characteristic of the stabilization of an essentially isolated determinantal singularity (EIDS). The formula is given in terms of the local Euler obstruction and  Gaffney's $m_{d}$ multiplicity.
\end{abstract}

%\abstractname{: This work is split into two parts, in the first part we compute the local Euler obstruction of generic determinantal varieties and apply this result to compute the Chern-Schwartz-MacPherson class of such varieties. On the second part  we compute the Euler characteristic of the stabilization of an essentially isolated determinantal singularity (EIDS). The formula is given in terms of the local Euler obstruction and the Gaffney's $m_{d}$ multiplicity.} 

\section*{Introduction}

In \cite{M}  MacPherson proved the existence and %unicity 
uniqueness of Chern classes for possibly singular complex algebraic varieties. The local Euler obstruction, defined by MacPherson in that paper, was one of the main ingredients in his proof. 

The computation of the local Euler obstruction is not easy; various authors propose formulas which make the computation easier.  For instance, L\^{e}  and
Teissier provide a formula in terms of polar multiplicities \cite{LT1}.

In \cite{BLS}, Brasselet, L\^{e}  and Seade
give a Lefschetz type formula for the local Euler obstruction. The formula shows that
the local Euler obstruction, as a constructible function, satisfies the Euler condition
relative to generic linear forms.

In order to understand these ideas better,  some authors worked on some more specific situations. For example, in the special case of toric surfaces, an interesting formula for the Euler obstruction was proved by Gonzalez--Sprinberg \cite{GS}, this formula was generalized by Matsui and Takeuchi for normal toric varieties \cite{MT}.

A natural class of singular varieties to investigate the local Euler obstruction and the generalizations of the characteristic classes is the class of generic determinantal varieties (Def.\ \ref{DefGenDet}). Roughly speaking, generic determinantal varieties are sets of matrices with a given upper bound on their ranks. Their significance comes, for instance, from the fact that many examples in algebraic geometry are of this type, such as the Segre embedding of a product of two projective spaces. Independently, in recent work \cite{Z}, Zhang computed the Chern-Mather-MacPherson Class of projectivized determinantal varieties, in terms of the trace of certain matrices associated with the push forward of the MacPherson-Schwartz class of the Tjurina transform of the singularity.

In the first section we prove a surprising formula that allow us to compute the local Euler obstruction of generic determinantal varieties using only Newton binomials. Using this formula we also compute the Chern--Schwartz--MacPherson classes of such varieties.

%In the second section we compute the Euler characteristic of the stabilization of an essentially isolated determinantal singularity (EIDS, Def.\ \ref{EIDS}). The formula is given in terms of the local Euler obstruction and the $m_{d}$ multiplicity defined by Gaffney in \cite{G-Top} for the study of isolated complete intersection singularities (ICIS), and for isolated singularities whose versal deformation have a smooth base in \cite{Gaff1}. Our results of the second section are mainly based on \cite{GRa} and \cite{GR}. 

In the second section we apply the results of the first section to the study of essentially isolated determinantal singularity (EIDS, Def.\ \ref{EIDS}). These singularities are the pullbacks of the generic determinantal singularities by maps which are generic except at the origin. We first compute the Euler characteristic of the EIDS. The formula, which appears in Theorem \ref{stabilization}, is given in terms of the local Euler obstruction and the $m_{d}$ multiplicity defined by Gaffney in \cite{G-Top} for the study of isolated complete intersection singularities (ICIS), and for isolated singularities whose versal deformation have a smooth base in \cite{Gaff1}. By imposing conditions on $X$, we can ensure that $X$ is such a good approximation to the generic determinantal variety, that all of the terms of the formula come from the generic determinantal variety, except for the ICIS contained in $X$, formed from the points where the rank of the presentation matrix is $0$. This is Proposition \ref{stabilizationEIDS}. 

In fact, the original motivation for the paper came from two sources; it was noted in earlier work on {\cite{GR}} that for generic maps the Euler obstruction for generic determinantal singularities should be closely related to the Euler obstruction of the pullback. Further, the Euler obstruction of the generic determinantal singularities appeared in the formula for the Euler characteristic of the stabilization of an EIDS for sufficiently generic sections. 

So the computation of the Euler obstruction for generic determinantal singularities became important.  As mentioned above, Theorem \ref{stabilization} gives the connection between the Euler characteristic of the stabilization and the Euler obstructions of the strata, while in the case where the map defining $X$ is sufficiently generic,  Proposition \ref{stabilizationEIDS} connects the Euler characteristic of the stabilization directly with the Euler obstruction of the generic singularity.

Studying the relation between the Euler obstruction of a determinantal variety and the  Euler obstruction of a generic determinantal variety, raises the question of the functoriality of the Euler obstruction. In turn this motivates the definition of the Euler obstruction of a module, (Definition \ref{Genob})  as the Euler obstruction of $F^*(JM(\Sigma^t))$ appears in Theorem \ref{theo20}. 

This theorem is a model for a general result in which the Euler obstruction of a space $X$ is related to the Euler obstruction of the pullback of the Jacobian module of the singularity whose pullback is $X$, with a defect term accounting for the difference.  If the map $F$ used to define $X$ has nice properties, then $X$ is a good approximation to the generic determinantal singularity, so it is expected that the Euler obstructions will be the same. In Corollary \ref{cor21}, we give conditions under this happens for determinantal singularities. The special feature for determinantal singularities is the small size of the fiber of the conormal variety for generic singularities. This implies that for a certain range of dimensions the polar varieties are unexpectedly empty. Exploring what the other  properties  generic determinantal varieties and their good approximations share would be interesting.

\section*{Acknowledgments}

The authors are grateful to Jonathan Mboyo Esole, Thiago de Melo, Otoniel Silva,  Jawad Snoussi and Xiping Zhang for their careful reading of the first draft of this paper and for their suggestions. 

The first author was partially supported by Conselho Nacional de Desenvolvimento Cient\'{i}fico e Tecnol\'{o}gico - CNPq, Brazil, grant PVE 401565/2014-9. The second author was partially supported by Funda\c{c}\~{a}o de Amparo \`{a} Pesquisa do Estado de S\~{a}o Paulo - FAPESP, Brazil,  grant 2015/16746-7 and Conselho Nacional de Desenvolvimento Cient\'{i}fico e Tecnol\'{o}gico - CNPq, Brazil, grants 474289/2013-3 and 303641/2013-4. The third author was partially supported by Funda\c{c}\~{a}o de Amparo \`{a} Pesquisa do Estado de S\~{a}o Paulo - FAPESP, Brazil, grant 2014/00304-2 and Conselho Nacional de Desenvolvimento Cient\'{i}fico e Tecnol\'{o}gico - CNPq, Brazil, grant 305651/2011-0. 

This paper was written while the second author was visiting Northeastern University, Boston, USA. During this period the first author had also visited the Universidade de São Paulo at São Carlos, Brazil, and we would like to thank these institutions for their hospitality.

%\section{Chern--Schwartz--MacPherson class of a singular variety}

\section{The Chern--Schwartz--MacPherson class of generic determinantal varieties}

In \cite{M}  MacPherson proved the existence and %unicity 
uniqueness of Chern classes for possibly singular complex algebraic varieties, which was conjectured earlier by Deligne and Grothendieck. These are homology classes which for nonsingular varieties are the Poincar\'{e} duals of the usual Chern classes. Some time later, Brasselet and Schwartz proved in \cite{BS}, using Alexander's duality that the Schwartz class, stated before the Deligne--Grothendieck conjecture, coincides with MacPherson's class, and therefore this class is called the Chern--Schwartz--MacPherson class.

In his proof, MacPherson used the language of constructible sets and functions. A constructible set in an algebraic variety is one obtained from the subvarieties by finitely many of the usual set-theoretic operations. A constructible function on a variety is one for which the variety has a finite partition into constructible sets such that the function is constant on each set. MacPherson proved the following result.

\begin{proposition}[{\cite[Prop.\ 1]{M}}] There is a unique covariant functor 
$\mathbf{F}$ from compact complex algebraic varieties to the abelian group whose value on a variety is the group of constructible functions from that variety to the integers and whose value $f_{*}$ on a map $f$ satisfies $$f_{*}(1_{W})(p)= \chi(f^{-1}(p)\cap W),$$where $1_{W}$ is the function that is identically one on the subvariety $W$ and zero elsewhere, and where $\chi$ denotes the topological Euler characteristic.
\end{proposition}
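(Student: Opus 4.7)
The plan is to split the statement into a uniqueness part, which is almost formal, and an existence part, which is the substantive content. Every constructible function $\alpha$ on a variety $X$ admits a (non-unique) presentation $\alpha=\sum n_i\,1_{W_i}$ with $W_i\subset X$ closed subvarieties and $n_i\in\mathbb{Z}$: this follows by noetherian induction starting from a finite partition into constructible pieces on which $\alpha$ is constant. Consequently any covariant assignment $f\mapsto f_{*}$ that is $\mathbb{Z}$-linear on constructible functions is completely determined by its values on the generators $1_{W}$; the formula in the statement pins down those values. This immediately yields uniqueness, provided one checks that the definition extends consistently by linearity.

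For existence, I would define $f_{*}(1_{W})(p):=\chi(f^{-1}(p)\cap W)$ and extend by linearity, and then verify three things. First, that $p\mapsto \chi(f^{-1}(p)\cap W)$ is a constructible function on the target: choose a Whitney stratification of $f|_{W}:W\to Y$ so that each stratum of $W$ is mapped submersively to a stratum of $Y$. Since $X$ is compact, $f|_{W}$ is proper, and the Thom--Mather first isotopy theorem gives local topological triviality of $f|_{W\cap f^{-1}(V)}\to V$ over each stratum $V$ of $Y$. Hence $\chi(f^{-1}(\,\cdot\,)\cap W)$ is constant on each $V$. Second, that the extension by linearity is independent of the representation $\alpha=\sum n_i\,1_{W_i}$: refine to a common stratification of all $W_i$, write each $1_{W_i}$ as an alternating sum of indicator functions of the locally closed strata, and use the additivity of $\chi$ on locally closed decompositions to verify that any linear relation among the $1_{W_i}$ is preserved under $f_{*}$. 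Third, the normalization $(\mathrm{id})_{*}=\mathrm{id}$ is immediate.

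The remaining and most delicate step is functoriality $(g\circ f)_{*}=g_{*}\circ f_{*}$. Fix $f\colon X\to Y$, $g\colon Y\to Z$, a closed $W\subset X$, and $p\in Z$. Choose a stratification $\{V_j\}$ of $Y$ on which $q\mapsto\chi(f^{-1}(q)\cap W)$ is constant with value $n_j$ on $V_j$, and refine so that $g|_{V_j}$ and $f|_{f^{-1}(V_j)\cap W}$ are locally trivial. By the multiplicativity of $\chi$ for a (stratified) fibration with constant Euler-characteristic fiber,
$$\chi\bigl(f^{-1}(g^{-1}(p)\cap V_j)\cap W\bigr)=n_j\cdot\chi(g^{-1}(p)\cap V_j),$$
and by additivity of $\chi$ on the locally closed decomposition of $(g\circ f)^{-1}(p)\cap W$ indexed by the $V_j$'s,
$$\chi\bigl((g\circ f)^{-1}(p)\cap W\bigr)=\sum_j n_j\,\chi(g^{-1}(p)\cap V_j)=g_{*}(f_{*}(1_{W}))(p).$$
The hard part throughout is not the combinatorics of constructible functions but the stratification input: existence of Whitney stratifications of a proper map, the Thom--Mather isotopy theorem ensuring local triviality over strata, and the multiplicativity/additivity properties of the Euler characteristic that these make available. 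Once these tools are in hand, both constructibility of $f_{*}(1_{W})$ and functoriality reduce to decomposing fibers into locally closed strata and summing.
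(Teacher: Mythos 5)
The paper does not prove this proposition; it is quoted verbatim as background from MacPherson \cite[Prop.~1]{M}, whose own (brief) argument is exactly the one you give: uniqueness from linearity over the generators $1_W$, and existence/functoriality from stratifying the proper maps, applying the first isotopy theorem, and using additivity of $\chi$ over locally closed decompositions together with multiplicativity over locally trivial fibrations. Your write-up is correct; the only point worth flagging is that the additivity and multiplicativity of the ordinary (non--compactly-supported) Euler characteristic for locally closed complex algebraic pieces is itself a nontrivial fact (equivalent to $\chi=\chi_c$ for complex varieties) that you invoke implicitly.
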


Theorem 1 of \cite{M} is the main result of that paper. As we mentioned before, the result  was conjectured by Deligne and Grothendieck and we write it below.

\begin{theorem} There exist a natural transformation from the functor $\mathbf{F}$ to homology which, on a nonsingular variety $X$, assigns to the constant function $1_{X}$ the Poincar\'{e} dual of the total Chern class of $X$.
\end{theorem}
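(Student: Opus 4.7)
The plan is to follow MacPherson's original approach by constructing the natural transformation explicitly via Chern--Mather classes and the local Euler obstruction. First, for each irreducible (possibly singular) subvariety $V$ of pure dimension $d$, construct the Nash modification $\nu\colon \tilde V \to V$, which parametrizes limits of tangent spaces along the smooth locus of $V$ and carries a tautological rank-$d$ Nash tangent bundle $\tilde T$. Define the Chern--Mather class
\[
c_M(V) := \nu_*\bigl(c(\tilde T) \cap [\tilde V]\bigr) \in H_*(V).
\]
Next, take as given the local Euler obstruction $\operatorname{Eu}_V$, an integer-valued constructible function on $V$ equal to $1$ on the smooth locus of $V$ and to $0$ off $V$; it can be defined via the obstruction to extending a nonzero lift of a generic radial vector field on $V$ to a section of $\tilde T$, or equivalently through polar multiplicities (L\^e--Teissier) or the Lefschetz-type formula of Brasselet--L\^e--Seade.

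The second step is to exhibit the family $\{\operatorname{Eu}_V\}$ as a triangular basis for $\mathbf F(X)$: every constructible function $\alpha \in \mathbf F(X)$ admits a unique finite decomposition $\alpha = \sum_V n_V \operatorname{Eu}_V$ indexed by irreducible subvarieties $V \subset X$. This is proved by induction on $\dim \mathrm{supp}(\alpha)$, peeling off top-dimensional components using the fact that $\operatorname{Eu}_V \equiv 1$ generically on $V$. One then \emph{defines}
\[
c_*(\alpha) := \sum_V n_V\, (i_V)_*\bigl(c_M(V)\bigr) \in H_*(X),
\]
where $i_V \colon V \hookrightarrow X$ is the inclusion. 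The definition is forced, so uniqueness of the natural transformation will drop out of naturality.

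The core difficulty is to verify that $c_*$ commutes with proper pushforward, i.e.\ $f_* \circ c_* = c_* \circ f_*$ for every proper morphism $f\colon X \to Y$. By linearity and the basis property, it suffices to prove $f_*\bigl(c_M(X)\bigr) = c_*\bigl(f_*(\operatorname{Eu}_X)\bigr)$ for each irreducible $X$ mapping properly to $Y$. Using resolution of singularities and the known covariance of the ordinary Chern class under proper birational maps between smooth varieties, one reduces to two cases: (i) $f$ a proper birational map, treated by MacPherson's \emph{graph construction}---embed $X$ in projective space, form the closure in a suitable Grassmannian bundle of the graphs of the Gauss maps parametrized by a family of linear projections, and run a limit-of-cycles argument comparing the Nash bundles upstairs and downstairs; and (ii) the case of a generically finite cover or a bundle projection, where the fibrewise Euler characteristic (via the proposition cited above) directly matches the two sides. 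The graph-construction step is the main obstacle, and it is there that Whitney stratifications, polar varieties, and Segre-class computations all have to be combined.

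Finally, the normalization on nonsingular $X$ is immediate: the Nash modification is an isomorphism, $\tilde T = TX$, so $c_M(X) = c(TX)\cap [X]$ is the Poincar\'e dual of the total Chern class, and $\operatorname{Eu}_X \equiv 1_X$, yielding $c_*(1_X) = c(TX) \cap [X]$ as required.
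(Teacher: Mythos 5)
Your outline reproduces exactly the construction this paper relies on: the statement is MacPherson's main theorem, which the paper quotes from \cite{M} without proof and later summarizes precisely as your strategy (the map $T$ sending a cycle $\sum n_i[X_i]$ to $\sum n_i\operatorname{Eu}_p(X_i)$ is an isomorphism of cycles with constructible functions, and $c_{CM}T^{-1}(1_X)$ satisfies the Deligne--Grothendieck requirements), so your route via Nash modification, Chern--Mather classes, and the Euler-obstruction basis is the same one. The only caveat is that the covariance step via the graph construction --- the real technical content of MacPherson's argument --- is named but not carried out in your sketch, which is the same level of detail the paper itself offers.
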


In other words, the theorem asserts that we can assign to any constructible function $\alpha$ on a compact complex algebraic variety $X$ an element $c_{*}(\alpha)$ of $H_{*}(X)$ satisfying the following three conditions:
\begin{enumerate}

\item{$f_{*}c_{*}(\alpha)=c_{*}f_{*}(\alpha)$}
\item{$c_{*}(\alpha + \beta)=c_{*}(\alpha)+c_{*}(\beta)$}
\item{$c_{*}(1_{X})= \operatorname{Dual}  c(X)$ if $X$ smooth.}

\end{enumerate}

As mentioned in \cite{M}, this is exactly Deligne's definition of the total Chern class of any compact variety $X$; % it means that 
the total Chern class is 
$c_{*}$ applied to the constant function $1_{X}$ on $X$.
 The compactness restriction may be dropped with minor modifications of the proof if all maps are taken to be proper and Borel--Moore homology (homology with locally finite supports) is used.

Let us now introduce some objects in order to define the Chern--Schwartz--MacPherson class. For more details on these concepts we suggest \cite{BLS,BS,LT1,M}.

Suppose $X$ is a representative of a $d$-dimensional analytic germ $(X,0) \subset (\mathbb{C}^{n},0)$, such that $X \subset U$, where $U$ is an open subset of $\mathbb{C}^{n}$. Let $G(d,n)$ denote the Grassmannian of complex $d$-planes
in $\mathbb{C}^n$. On the regular part $X_{\text{reg}}$ of $X$ the Gauss map
$\phi: X_{\text{reg}} \to U\times G(d,n)$ is well defined by $\phi(x) =
(x,T_x(X_{\text{reg}}))$. 

\begin{definition}
The Nash transformation (or Nash blow up) of $X$ denoted by $N(X)$
is the closure of the image $\operatorname{Im}(\phi)$ in $ U\times G(d,n)$. It is a
(usually singular) complex analytic space endowed with an analytic
projection map $\nu : N(X) \to X$
which is biholomorphic away from $\nu^{-1}(\operatorname{Sing}(X))$.
\end{definition}

The fiber of  the tautological bundle  $\mathcal{T}$ over $G(d,n)$, at point $P \in G(d,n)$,
is the set of vectors $v$ in the $d$-plane $P$. We still denote by $\mathcal{T}$ the
corresponding trivial extension bundle over $ U \times G(d,n)$. Let $N(T)$ be the restriction of  ${\mathcal T}$\nota{} to $N(X)$, with
projection map $\pi$.  The bundle $N(T)$ on $N(X)$ is called \textit{the  Nash bundle} of $X$.

An element of $N(T)$ is written $(x,P,v)$ where $x\in U$,
$P$ is a $d$-plane in ${\mathbb C}^n$ based at $x$ and $v$ is a vector in
$P$. We have the following diagram:\nota{se quiser, posso melhorar diag}
$$
\begin{matrix}
N(T) & \hookrightarrow & {\mathcal T} \cr
{\pi} \downarrow & & \downarrow \cr
N(X) & \hookrightarrow & U \times G(d,n) \cr
{\nu}\downarrow & & \downarrow \cr
X & \hookrightarrow & U. \cr
\end{matrix}
$$

Mather has defined an extension of Chern classes to singular varieties by the formula$$c_{CM}(X)= \nu_{*}\operatorname{Dual}  c(N(T)),$$where $c(N(T))$ denotes the total Chern class in cohomology of the Nash bundle, the Dual denotes the Poincar\'{e} duality map defined by capping with the fundamental (orientation) homology class.

An algebraic cycle on a variety $X$ is a finite formal linear sum $\sum n_{i}[X_{i}]$ where the $n_{i}$ are integers and the $X_{i}$ are irreducible subvarieties of $X$. We may define $c_{CM}$ on any algebraic cycle of $X$ by$$c_{CM}(\sum n_{i} [X_{i}])=\sum n_{i}c_{CM}(X_{i}),$$where by abuse of notation we denote ${\operatorname{incl}_{i}}_{*}c_{CM}(X_{i})$ by $c_{CM}(X_{i})$.

An important object introduced by MacPherson in his work is the local Euler obstruction. This invariant was deeply investigated by many authors, and for an overview about it see \cite{B}.  Brasselet and Schwartz presented in \cite{BS} an alternative definition for the local Euler obstruction using stratified vector fields. 

\begin{definition}
Let us denote by $TU|_{X}$ the restriction to $X$ of the tangent bundle of $U$. A stratified vector field $v$ on $X$ means a continuous section of $TU|_{X}$ such that if $x \in V_{\alpha} \cap X$ then $v(x) \in T_{x}(V_{\alpha})$. 
\end{definition}

By Whitney condition (a) one has the following:

\begin{lemma}[See \cite{BS}]
Every stratified vector field $v$ nowhere zero on a subset $A \subset X$ has a canonical lifting as a nowhere zero section $\tilde{v}$ of the Nash bundle $N(T)$ over $\nu^{-1}(A) \subset N(X)$.
\end{lemma}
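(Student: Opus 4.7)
The plan is to exhibit the lift explicitly and then verify the three requirements (well-definedness of the lift into the fiber, continuity, and non-vanishing), with Whitney (a) doing the essential work at singular points.

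First I would define the candidate lift by the tautological formula: for a point $(x,P) \in \nu^{-1}(A) \subset N(X)$, set
\[
\tilde v(x,P) \;=\; (x,\,P,\,v(x)) \;\in\; N(T).
\]
To check that this really lies in the Nash bundle, one needs $v(x) \in P$. Over the regular part $X_{\mathrm{reg}}$ this is immediate, since the unique point of $\nu^{-1}(x)$ is $(x, T_x X_{\mathrm{reg}})$ and, as $x$ lies in the open stratum, stratified-ness of $v$ forces $v(x) \in T_x X_{\mathrm{reg}} = P$.

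The content of the lemma is the extension to singular points. Let $(x,P) \in \nu^{-1}(A)$ with $x$ lying in some stratum $V_\beta$. By construction of the Nash blow up, $P$ is the limit of tangent planes $T_{x_n} X_{\mathrm{reg}}$ along a sequence $x_n \to x$ with $x_n \in X_{\mathrm{reg}}$; each $x_n$ belongs to some top-dimensional stratum $V_\alpha$, and after passing to a subsequence we may assume a single $V_\alpha$ works. Whitney condition (a) applied to the pair $(V_\alpha,V_\beta)$ then gives $T_x V_\beta \subset P$. Since $v$ is stratified we have $v(x) \in T_x V_\beta$, hence $v(x) \in P$, so $\tilde v(x,P)$ does live in the fiber of $N(T)$ over $(x,P)$. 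This is the step I expect to be the main obstacle, and it is precisely the place where the Whitney hypothesis is unavoidable.

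Next I would address continuity: the map $(x,P)\mapsto(x,P,v(x))$ is the composition of $\nu$ with $v$ in the vector coordinate, both continuous, so $\tilde v$ is a continuous section of $N(T)$ over $\nu^{-1}(A)$. The canonicity claim amounts to observing that no choice has been made in the definition; any other continuous lift agreeing with $v$ on the regular locus must coincide with $\tilde v$ by density of $\nu^{-1}(A \cap X_{\mathrm{reg}})$ in $\nu^{-1}(A)$ and continuity. Finally, $\tilde v(x,P) = 0$ forces $v(x) = 0$, and since $v$ is nowhere zero on $A$ and $\nu(\nu^{-1}(A)) = A$, the lift $\tilde v$ is nowhere zero on $\nu^{-1}(A)$, completing the proof.
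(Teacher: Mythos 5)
Your proposal is correct and is precisely the argument the paper intends: the paper gives no proof of this lemma, only the remark that it follows from Whitney condition (a) together with the citation to Brasselet--Schwartz, and your verification that any limiting plane $P$ in the Nash fiber over $x\in V_\beta$ satisfies $T_xV_\beta\subset P$ (hence $v(x)\in P$) is exactly the standard way that remark is made precise. The continuity, canonicity, and non-vanishing checks are routine and correctly handled.
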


Now consider a stratified radial vector field $v(x)$ in a neighborhood of $\{0\}$ in $X$, {\textit{i.e.}}, there is $\varepsilon_{0}$ such that for every $0<\varepsilon \leq \varepsilon_{0}$, $v(x)$ is pointing outwards the ball ${B}_{\varepsilon}$ over the boundary ${S}_{\varepsilon} := \partial{{B}_{\varepsilon}}$.

The following interpretation of the local Euler obstruction has been given by Brasselet and Schwartz in \cite{BS}.

\begin{definition}
Let $v$ be a radial vector field on $X \cap {S}_{\varepsilon}$ and $\tilde{v}$ the lifting of $v$ on $\nu^{-1}(X \cap {S}_{\varepsilon})$ to a section of the Nash bundle. 

The local Euler obstruction (or simply the Euler obstruction), denoted by $\operatorname{Eu}_{0}(X)$, is defined to be the obstruction to extending $\tilde{v}$ as a nowhere zero section of $N(T)$ over $\nu^{-1}(X \cap {B}_{\varepsilon})$.
\end{definition}

More precisely, let $$\mathcal{O}{(\tilde{v})} \in H^{2d}(\nu^{-1}(X \cap {B_{\varepsilon}}), \nu^{-1}(X \cap {S_{\varepsilon}});\mathbb{Z})$$ be the obstruction cocycle to extending $\tilde{v}$ as a nowhere zero section of $\widetilde{T}$\nota{wide?} inside $\nu^{-1}(X\cap {B_{\varepsilon}})$. The Euler obstruction $\operatorname{Eu}_{0}(X)$ is defined as the evaluation of the cocycle $\mathcal{O}(\tilde{v})$ on the fundamental class of the topological  pair $(\nu^{-1}(X \cap {B_{\varepsilon}}), \nu^{-1}(X \cap {S_{\varepsilon}}))$. %The Euler obstruction is an integer.

Note that if $0 \in X$ is a smooth point we have $\operatorname{Eu}_{0}(X)=1$, but the converse is false, this was first observed by Piene in \cite{Piene2} (Example, pp.\ 28--29).
 
In this paper we use an interesting formula for the local Euler obstruction due to Brasselet, L\^{e} and Seade, that shows that
the Euler obstruction, as a constructible function, satisfies the Euler condition
relative to generic linear forms.

\begin{theorem}[{\cite[Theo.\ 3.1]{BLS}}] \label{BLS}Let $(X,0) \subset (\mathbb{C}^n,0)$ be an equidimensional reduced complex analytic germ of dimension $d$. Let us consider $X \subset U \subset \mathbb{C}^n$ a sufficiently small representative of the germ, where $U$ is an open subset of $\mathbb{C}^{n}$. We consider a complex analytic Whitney stratification $\mathcal{V} = \{V_i\}$ of $U$ adapted to $X$ and we assume that $\{0\}$ is a $0$-dimensional stratum. We also assume  that $0$ belongs to the closure of all the strata. Then
for each generic linear form $l$, there is $\varepsilon_0$
such that for any $\varepsilon$ with $0<\varepsilon<
\varepsilon_0$ and $\delta \neq 0$ sufficiently small, the Euler obstruction of $(X,0)$ is equal to:
$$\operatorname{Eu}_{0}(X)=\sum_{i =1}^q\chi \big(V_i\cap B_\varepsilon\cap l^{-1}(\delta) \big) \cdot
\operatorname{Eu}_{V_i}(X),$$ where $\operatorname{Eu}_{V_i}(X)$ is the value of the Euler
obstruction of $X$ at any point of $V_i$, $i=1,\dots,q$, and $0 < \vert \delta \vert \ll \varepsilon \ll 1$, where $\chi$ denotes the topological Euler characteristic.
\end{theorem}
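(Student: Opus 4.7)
My plan is to prove the formula via a stratified Poincar\'{e}--Hopf style index computation on the Nash modification. By definition, $\operatorname{Eu}_0(X)$ is the obstruction to extending the lifted radial field $\widetilde{v}$ to a non-vanishing section of $N(T)$ over $\nu^{-1}(X \cap B_\varepsilon)$. I would replace $v$ by a stratified vector field $w$ which is homotopic to $v$ through nowhere-zero stratified fields on the boundary $X \cap S_\varepsilon$ but whose zero set inside $X \cap B_\varepsilon$ is concentrated on the slice $X \cap l^{-1}(\delta)$; then I would identify the total obstruction with a sum of local contributions at these zeros.

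Concretely, I would first choose $l$ sufficiently generic so that (i) $l|_{V_i}$ has no critical points in $V_i \cap (B_\varepsilon \setminus \{0\})$ for each stratum $V_i$, and (ii) $l^{-1}(\delta)$ meets every stratum transversely and is transverse to $\partial B_\varepsilon$ whenever $0 < |\delta| \ll \varepsilon \ll 1$. Then I would build $w$ on $X \cap B_\varepsilon$ by interpolating between the outward radial field near $X \cap \partial B_\varepsilon$ and a stratified gradient-like field for $|l - \delta|^2$ in the interior, so that the zeros of $w$ are exactly the points of $X \cap l^{-1}(\delta) \cap B_\varepsilon$. Since the boundary behavior of $w$ is homotopic to $v$, the obstruction to extending the lifted section $\widetilde{w}$ to $\nu^{-1}(X \cap B_\varepsilon)$ still equals $\operatorname{Eu}_0(X)$.

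Finally, by a local product argument using Whitney regularity, the local index of $\widetilde{w}$ at a zero of $w$ lying in the piece $V_i \cap l^{-1}(\delta)$ should factor as $\operatorname{Eu}_{V_i}(X)$ (the ``vertical'' winding in the Nash fiber) times the usual Poincar\'{e}--Hopf index of $w|_{V_i \cap l^{-1}(\delta)}$ at that point. Summing the horizontal indices along each stratum via the classical Poincar\'{e}--Hopf theorem on the manifold-with-boundary $V_i \cap B_\varepsilon \cap l^{-1}(\delta)$ (whose boundary behavior is controlled by our construction) converts the total obstruction into $\sum_i \operatorname{Eu}_{V_i}(X) \cdot \chi(V_i \cap B_\varepsilon \cap l^{-1}(\delta))$. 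The main obstacle is justifying the factorization of local obstruction indices on the Nash bundle; this rests on Whitney condition (a), which guarantees that the Nash transformation is locally trivial along each stratum in a way compatible with the transverse slice by $l^{-1}(\delta)$, so that the vertical and horizontal contributions genuinely separate.
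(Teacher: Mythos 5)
The paper does not actually prove this statement: it is quoted as Theorem 3.1 of Brasselet, L\^e and Seade, so there is no in-paper argument to compare yours against. Your sketch does, however, reconstruct the broad strategy of the original proof: deform the radial field to a stratified field whose zeros are concentrated on the slice $l^{-1}(\delta)$, localize the obstruction at those zeros, and convert indices into Euler characteristics stratum by stratum. As an outline this is the right route.

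Two steps are genuine gaps as written. First, the ``factorization'' of the local obstruction index as $\operatorname{Eu}_{V_i}(X)$ times a Poincar\'e--Hopf index is precisely the Brasselet--Schwartz proportionality theorem; it is the deep input of the whole argument and does not follow from ``a local product argument using Whitney regularity.'' Local topological triviality along $V_i$ gives a product decomposition of the germ of $X$, but identifying the obstruction class of the lifted section over the Nash transform with the stated product of integers is a separate obstruction-theoretic theorem that must be invoked or proved. Second, a gradient-like field for $|l-\delta|^2$ vanishes on all of $X \cap l^{-1}(\delta) \cap B_\varepsilon$, a positive-dimensional set, so you cannot yet speak of local indices; you must further choose on the slice a stratified vector field with isolated zeros (M.-H.\ Schwartz's radial extension of fields chosen on each stratum), and the claim that the indices lying in $V_i \cap l^{-1}(\delta) \cap B_\varepsilon$ sum to $\chi(V_i\cap B_\varepsilon\cap l^{-1}(\delta))$ is not the classical Poincar\'e--Hopf theorem: that piece is an open, non-compact manifold whose frontier contains lower strata, and one needs the specific behavior of radial extensions in tubes around the lower strata (no zeros there, outward-pointing on the tube boundaries) to obtain the Euler characteristic of the open stratum rather than of some closure. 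With those two ingredients supplied --- both are in the Brasselet--Schwartz and BLS literature --- your outline closes.
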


With the aid of Gonzalez--Sprinberg's purely algebraic interpretation of the local Euler obstruction (\cite{GS2}), L\^e and Teissier in  \cite{LT1}  showed that the local Euler obstruction  is an alternating sum of the multiplicities  of the  local polar varieties. 
This is an important formula for computing the local Euler obstruction, and we use it in this paper. 

\begin{theorem}[{\cite[Cor.\ 5.1.4]{LT1}}]\label{LT} Let $(X,0)\subset (\mathbb{C}^{n+1},0)$ be the germ of an equidimensional reduced analytic  space  of dimension $d$. Then
  $$\operatorname{Eu}_0(X)=\sum_{i=0}^{d-1}(-1)^{d-i-1}m_{d-i-1}(X,0),\nota{sum}$$
where  $m_i(X,0)$ is the polar multiplicity
  of the  local polar varieties $P_i(X,0)$.
  \end{theorem}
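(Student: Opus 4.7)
The plan is to reduce the topological definition of $\operatorname{Eu}_0(X)$ to an intersection-theoretic computation on the Nash modification $\nu\colon N(X)\to X$, and then recognize the resulting intersection numbers as polar multiplicities. The first step is Gonzalez--Sprinberg's algebraic reinterpretation: the obstruction cocycle defining $\operatorname{Eu}_0(X)$ can be evaluated as
\[
\operatorname{Eu}_0(X) \;=\; \int_{\nu^{-1}(0)} c(N(T))\,\frown\, s\bigl(\nu^{-1}(0),\,N(X)\bigr),
\]
i.e.\ the degree of the total Chern class of the Nash bundle capped with the Segre class of the exceptional fibre. Because $\nu^{-1}(0)\subset U\times G(d,n+1)$ is contained in the Grassmannian fibre over $0$, the Chern classes $c_j(N(T))$ can be written as pullbacks of Schubert classes.

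The second step is to identify the polar varieties with Schubert-type loci on $N(X)$. For a generic flag $L_\bullet$ in $\mathbb{C}^{n+1}$, the polar variety $P_k(X,0)$ is by definition the closure in $X$ of those regular points $x$ where $T_xX_{\mathrm{reg}}$ fails to project isomorphically onto a generic $(d-k)$-plane; equivalently, the strict transform $\widetilde{P}_k(X,0)\subset N(X)$ is the intersection of $N(X)$ with the Schubert variety $\sigma(L_\bullet)$ cut out by the tautological subbundle meeting $L_{n-d+k+1}$ nontrivially. Genericity of $L_\bullet$ guarantees that these intersections are proper and that their local degrees at $\nu^{-1}(0)$ are exactly the polar multiplicities $m_k(X,0)$.

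The third step is the combinatorial expansion. One writes $c(N(T))$ as an alternating sum of Chern classes of the quotient bundle via the relation $c(N(T))c(Q)=1$ on the Grassmannian, where $Q$ is the tautological quotient bundle; each $c_j(Q)$ corresponds to a Schubert class of codimension $j$, and intersecting with $[N(X)]$ produces precisely the classes $[\widetilde{P}_{d-j-1}(X,0)]$ up to sign. Capping with the Segre class $s(\nu^{-1}(0),N(X))$ and pushing down by $\nu_*$ then yields
\[
\operatorname{Eu}_0(X) \;=\; \sum_{i=0}^{d-1}(-1)^{d-i-1} m_{d-i-1}(X,0),
\]
with the signs coming from the alternation in the Chern-class inversion.

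The main obstacle is establishing the dictionary between Schubert cycles on the Nash blowup and the polar varieties defined downstairs: one must verify that for a sufficiently generic flag the scheme-theoretic intersection $N(X)\cap \sigma(L_\bullet)$ is reduced along $\widetilde{P}_k(X,0)$ and that its multiplicity at the exceptional fibre matches the classical definition of $m_k(X,0)$ as the multiplicity at $0$ of the image in $X$. This transversality and multiplicity-preservation statement is where Whitney's conditions on the stratification enter crucially, and it requires a careful use of the genericity of both the flag and the linear projections used to define $P_k(X,0)$.
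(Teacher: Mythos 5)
This statement is quoted in the paper with a citation to L\^e--Teissier \cite[Cor.~5.1.4]{LT1}; the paper supplies no proof of its own, so the only meaningful comparison is with the argument in that reference. Your outline follows exactly the route taken there: Gonzalez--Sprinberg's algebraic expression of $\operatorname{Eu}_0(X)$ as the degree of $c(N(T))\cap s(\nu^{-1}(0),N(X))$ on the Nash modification, the identification of the strict transforms of the local polar varieties with intersections of $N(X)$ with generic Schubert cycles, and the inversion $c(N(T))=c(Q)^{-1}$ to produce the alternating sum. So the strategy is the correct one and is the one used in the cited source.

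What you have written is, however, a road map rather than a proof, and the step you defer in your last paragraph is precisely where all the content lies. The assertion that for a generic flag the intersection $N(X)\cap\sigma(L_\bullet)$ is proper and reduced along the strict transform of $P_k(X,0)$, \emph{and} that its local degree over $0$ coincides with the multiplicity $m_k(X,0)$ of the image cycle downstairs, is the substance of L\^e--Teissier's transversality theorem and of their ``th\'eor\`eme cl\'e''; it rests on the Whitney conditions, on the conormal geometry of $X$, and on an induction over generic plane sections. It cannot be dispatched by Kleiman-type genericity alone, because the relevant degrees are computed over the single non-generic point $0$ (i.e.\ on the exceptional fibre $\nu^{-1}(0)$), where moving the flag does not make the intersection transverse; without this equality of local degrees the formula is not established. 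A smaller but real issue: in your third step the codimension-$j$ Schubert class cuts out the strict transform of $P_j(X,0)$ (which has codimension $j$ in $X$), not of $P_{d-j-1}(X,0)$; the index $d-j-1$ in the final formula comes from pairing $c_j$ against the dimension-$j$ component of the Segre class of $\nu^{-1}(0)$ in $N(X)$, so the bookkeeping in that paragraph needs to be redone before the signs and indices can be trusted.
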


The local polar variety $P_{i}(X,0)$ is the local version of the global polar variety $P_{i}(X)$  (Definition \ref{polar}).

Using the local Euler obstruction, MacPherson defined a map $T$ from the algebraic cycles on $X$ to the constructible functions on $X$ by$$T(\sum n_{i}X_{i})(p)= \sum n_{i} \operatorname{Eu}_{p}(X_i).$$ And he proved that (Lemma 2 and Theorem 2 of \cite{M}): 

\begin{theorem}$T$ is a well-defined isomorphism from the group of algebraic cycles to the group of constructible functions and that $c_{CM}T^{-1}(1_{X})$ satisfies the requirements for $c_{*}$ in the Deligne--Grothendieck conjecture.
\end{theorem}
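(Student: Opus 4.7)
The plan is to establish first that $T$ is a well-defined isomorphism, and then to verify the three axioms of the Deligne--Grothendieck conjecture for the map $c_{*}:=c_{CM}\circ T^{-1}$.

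For well-definedness, I would show that for an irreducible subvariety $Y\subset X$ the function $p\mapsto \operatorname{Eu}_{p}(Y)$ is constructible on $X$. After choosing a complex analytic Whitney stratification $\{V_{\alpha}\}$ of $X$ compatible with $Y$ and with the singular loci of $Y$, the local topological triviality of Whitney stratifications along strata forces $\operatorname{Eu}(Y)$ to be constant on each $V_{\alpha}$, and linear extension yields the required constructible function. To show $T$ is an isomorphism, I would exhibit it as unitriangular with respect to the dimension filtration: for an irreducible $Y$ of dimension $d$, $T([Y])$ is supported on $Y$ and equals $1$ on the smooth part $Y_{\mathrm{reg}}$, so $T([Y])-1_{Y}$ is supported on $\operatorname{Sing}(Y)$, a subvariety of strictly smaller dimension. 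Since any constructible function can be written as $\sum a_{i}1_{W_{i}}$ with $W_{i}$ irreducible, descending induction on dimension inverts the transformation uniquely.

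The normalization and additivity axioms follow more directly. Axiom (2) is immediate from the linearity of $T^{-1}$ and the linear extension of $c_{CM}$ over cycles. For axiom (3), if $X$ is smooth then the Nash blowup $\nu:N(X)\to X$ is an isomorphism and $N(T)$ is the tangent bundle of $X$; moreover $\operatorname{Eu}_{p}(X)=1$ for every $p\in X$, so $T([X])=1_{X}$ and $T^{-1}(1_{X})=[X]$, giving $c_{*}(1_{X})=c_{CM}([X])=\operatorname{Dual} c(X)$.

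The principal obstacle is axiom (1), the naturality $f_{*}c_{*}(\alpha)=c_{*}f_{*}(\alpha)$ for a proper morphism $f\colon X\to Y$. The natural approach is MacPherson's graph construction: given $f$ and an irreducible $V\subset X$, one produces an algebraic family of cycles, depending on a parameter in $\mathbb{C}$, that degenerates the Nash blowup of $V$ into a cycle encoding the behavior of $f|_{V}$ on its fibers, so that the Chern--Mather classes of the two endpoints coincide in homology. The combinatorial bookkeeping, which rests on Euler characteristics of generic fibers (consistent with $f_{*}(1_{W})(p)=\chi(f^{-1}(p)\cap W)$ and with the Brasselet--L\^e--Seade formula of Theorem~\ref{BLS}), matches $f_{*}c_{CM}([V])$ with $c_{CM}(T^{-1}f_{*}T[V])$. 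Reducing to the two elementary cases of closed immersions and projections from projective bundles, and invoking compatibility of the Chern classes of the Nash bundle with pushforward under those operations, closes the argument. The delicate step, and the historical heart of MacPherson's proof, is verifying that the limit cycle produced by the graph construction really represents $T^{-1}(f_{*}T[V])$ in the group of algebraic cycles, after which applying $c_{CM}$ delivers the naturality.
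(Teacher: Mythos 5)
This theorem is not proved in the paper at all: it is quoted verbatim as Lemma~2 and Theorem~2 of MacPherson's paper \cite{M} and used as background, so there is no in-paper argument to compare yours against. Judged on its own terms, your proposal is a faithful outline of MacPherson's original proof: the unitriangularity of $T$ with respect to the dimension filtration (using $\operatorname{Eu}_{p}(Y)=1$ on $Y_{\mathrm{reg}}$ and descending induction on dimension), the linearity giving axiom (2), and the observation that for smooth $X$ the Nash blowup is an isomorphism with $N(T)=TX$, giving axiom (3), are all correct and are exactly the standard steps.

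Two caveats. First, a minor one: constancy of $\operatorname{Eu}(Y)$ along the strata of a Whitney stratification does not follow merely from \emph{topological} local triviality, since the Euler obstruction is defined through the Nash blowup, an analytic construction; you need either MacPherson's own constructibility argument (his Lemma~2) or the L\^e--Teissier identification of $\operatorname{Eu}$ with an alternating sum of polar multiplicities (Theorem~\ref{LT} in this paper), which are constant along Whitney strata. Second, and more seriously: the naturality axiom (1) is the entire content of the theorem, and your treatment of it is a description rather than a proof. You name the graph construction, the reduction to closed immersions and projective bundle projections, and you explicitly flag ``verifying that the limit cycle produced by the graph construction really represents $T^{-1}(f_{*}T[V])$'' as the delicate step --- but that step is precisely MacPherson's Theorem~1, and deferring it means the proposal establishes the easy half of the statement while leaving the hard half as an acknowledged placeholder. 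As an outline of the right strategy it is accurate; as a proof it is incomplete at exactly the point where the real work lies.
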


%\section{The Chern--MacPherson--Schwartz Chern class of generic determinantal varieties}

In this section we prove a formula to compute the local Euler obstruction of a generic determinantal variety, and applying  MacPherson's definition, we find the Chern--Schwartz--MacPherson class of this variety.

First, let us recall the definition of the generic determinantal variety.

\begin{definition}\label{DefGenDet}Let  $n, k, s \in \mathbb{Z}$, $n\geq 1$, $k \geq 0$ and $\operatorname{Mat}_{(n,n+k)}(\mathbb{C})$ be the set of all $n\times (n+k)$ matrices with
complex entries, $\Sigma^{s}\subset \operatorname{Mat}_{(n,n+k)}(\mathbb{C})$ the subset
formed by matrices that have rank less than $s$, with $1\leq s\leq
n$. The set $\Sigma^{s}$ is called the \textit{generic
determinantal variety}.
\end{definition}

\begin{remark} The following  properties of the generic determinantal varieties  are fundamental in this work. 
\begin{enumerate}

\item{$\Sigma^{s}$ is an irreducible
singular algebraic variety.}

\item{The codimension of $\Sigma^{s}$ in the ambient space is $(n-s+1)(n+k-s+1)$.} 

\item{The singular set of $\Sigma^{s}$ is exactly
$\Sigma^{s-1}$.}

\item{ The stratification of $\Sigma^{s}$ given by  $\{\Sigma^{t}\setminus \Sigma^{t-1}\}$, with $1 \leq t \leq s$ is locally analytically trivial and hence it is a Whitney stratification of $\Sigma^{s}$.}

\end{enumerate}

As references for these topics we recommend, chapter 2, section 2, of \cite{ACGH} and the book \cite{Bruns}.

\end{remark}

\begin{remark}

Every element of $\operatorname{Mat}_{(n, n+k)}(\mathbb{C})$ can be seen as a linear
map  from $\mathbb{C}^n$ to $\mathbb{C}^{n+k}$, or from $\mathbb{C}^{n+k}$ to $\mathbb{C}^n$, then we will also refer to the space of matrices as
$\operatorname{Hom}(\mathbb{C}^n, \mathbb{C}^{n+k})$ or $\operatorname{Hom} (\mathbb{C}^{n+k}, \mathbb{C}^n)$.
\end{remark}

The next result is a very important in this paper. To state it let us fix some notations. Let $\overline{\chi}$ denote the reduced Euler characteristic, that is $\overline{\chi}= \chi -1$, where $\chi$ denotes the topological Euler characteristic.

Let us also recall the notion of normal slice this notion is related to the complex link  and normal Morse datum. The complex link is an important object in the study of the topology of complex analytic sets. It is analogous to the Milnor fibre and was studied first in \cite{Le1}. It plays a crucial role in complex stratified Morse theory (see \cite{GM}) and appears in general bouquet theorems for the Milnor fibre of a function with isolated singularity (see \cite{Le2, Si, Ti}). It is related to the multiplicity of polar varieties and also to the local Euler obstruction (see \cite{LT1, LT2}).

\begin{definition}
Let $V$ be a stratum of a Whitney stratification of $X$, a small representative of the analytic germ $(X,0) \subset (\mathbb{C}^{n},0)$, and $x$ be a point in $V$. We call $N$ a normal slice to $V$ at $x$, if $N$ is a closed complex submanifold of $\mathbb{C}^n$ which is transversal to $V$ at $x$ and $N \cap V =\{x\}$.
\end{definition}
%\begin{definition}
%The complex link $l_V$ of $V$ is defined by:
%$$l_V = X\cap N \cap B_{\epsilon}(x)\cap \{g=\delta\} ,$$
%where $ 0< \vert \delta \vert \ll \epsilon \ll 1$.  Here $B_{\epsilon}(x)$ is the closed ball of radius $\epsilon$ centered at $x$ .

%The normal Morse datum ${\rm NMD}(V)$ of $V$ is the pair of spaces: 
%$${\rm NMD}(S) =(X\cap N \cap B_{\epsilon}(x), X\cap N \cap B_{\epsilon}(x)\cap \{g=\delta\}).$$
%\end{definition}
%The fact that these two notions are well-defined, i.e. independent of all the choices made to define them, is explained in \cite{GM}. 

\begin{proposition}[{\cite[Prop.\ 3]{EG}}]\label{Prop3} Let $\ell : \operatorname{Hom}(\mathbb{C}^n, \mathbb{C}^{n+k})\to \mathbb{C}$ be a generic linear form. Then, for $s  \leq n$, one has
$$\overline{\chi} (\Sigma^{s} \cap {\ell}^{-1}(1))=(-1)^{s} \binom{n-1}{s-1}.\nota{usei binom bem mais facil}$$
%Where $\overline{\chi}$ denotes reduced Euler characteristic.
\end{proposition}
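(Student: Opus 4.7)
The strategy is to lift the computation to the Tjurina transform of $\Sigma^s$, which is a vector bundle over a Grassmannian. One computes the Euler characteristic of the hyperplane slice in two ways---via the Grassmannian base and via the rank stratification of $\Sigma^s$---and compares them to extract $\chi(\Sigma^s\cap\ell^{-1}(1))$ by induction on $s$.

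Set
$$\tilde\Sigma^s=\bigl\{(A,W)\in\operatorname{Mat}_{n,n+k}(\mathbb{C})\times\operatorname{Gr}(n+k-s+1,n+k):A(W)=0\bigr\},$$
with projections $\pi\colon\tilde\Sigma^s\to\Sigma^s$ and $\rho\colon\tilde\Sigma^s\to\operatorname{Gr}(n+k-s+1,n+k)$. Then $\rho$ is a rank-$n(s-1)$ vector bundle (the fiber over $W$ is $\operatorname{Hom}(\mathbb{C}^{n+k}/W,\mathbb{C}^n)$), and $\pi$ is a resolution of singularities whose restriction to the rank-$(t-1)$ stratum $\Sigma^t\setminus\Sigma^{t-1}$ is a Grassmann bundle with fiber $\operatorname{Gr}(n+k-s+1,n+k-t+1)$, of Euler characteristic $\binom{n+k-t+1}{s-t}$.

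For the first computation, represent $\ell$ by $B\in\operatorname{Mat}_{n+k,n}(\mathbb{C})$ via $\ell(A)=\operatorname{tr}(AB)$, and take $B$ generic of full rank $n$. The restriction of $\ell$ to the fiber $\rho^{-1}(W)\cong\operatorname{Hom}(\mathbb{C}^{n+k}/W,\mathbb{C}^n)$ vanishes identically if and only if the composite $\mathbb{C}^n\xrightarrow{B}\mathbb{C}^{n+k}\to\mathbb{C}^{n+k}/W$ is zero, i.e.\ $\operatorname{im}(B)\subset W$. For generic $B$ this bad locus is the sub-Grassmannian $\operatorname{Gr}(k-s+1,k)$ in the base, of Euler characteristic $\binom{k}{s-1}$ (taken to be zero when $s-1>k$). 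Over its complement $\rho^{-1}(W)\cap\ell^{-1}(1)\cong\mathbb{C}^{n(s-1)-1}$ has $\chi=1$, and above it the intersection is empty, giving
$$\chi(\tilde\Sigma^s\cap\ell^{-1}(1))=\binom{n+k}{s-1}-\binom{k}{s-1}.$$

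For the second, set $a_t=\chi((\Sigma^t\setminus\Sigma^{t-1})\cap\ell^{-1}(1))$. Euler-characteristic additivity together with the Grassmann bundle structure of $\pi$ over each stratum gives
$$\chi(\tilde\Sigma^s\cap\ell^{-1}(1))=\sum_{t=1}^{s}\binom{n+k-t+1}{s-t}\,a_t,$$
with $a_1=0$ because $\Sigma^1=\{0\}$ is disjoint from $\ell^{-1}(1)$. Induct on $s$. The base case $s=1$ gives $\overline\chi(\emptyset)=-1=(-1)^{1}\binom{n-1}{0}$. For the inductive step the hypothesis together with Pascal's identity yields $a_t=(-1)^t\binom{n}{t-1}$ for $2\le t\le s-1$, and matching the two expressions for $\chi(\tilde\Sigma^s\cap\ell^{-1}(1))$ and solving for $a_s$ reduces the claim to the Chu--Vandermonde-type identity
$$\binom{k}{s-1}=\sum_{j=0}^{s-1}(-1)^{j}\binom{n}{j}\binom{n+k-j}{s-1-j},$$
which I would verify by extracting the coefficient of $x^{s-1}$ in
$$(1+x)^{n+k}\Bigl(1-\tfrac{x}{1+x}\Bigr)^{n}=(1+x)^{k}.$$
Then $\chi(\Sigma^s\cap\ell^{-1}(1))=\chi(\Sigma^{s-1}\cap\ell^{-1}(1))+a_s=1+(-1)^{s}\binom{n-1}{s-1}$, as required.

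The most delicate point will be justifying the description of the bad locus in $\operatorname{Gr}(n+k-s+1,n+k)$ for sufficiently generic $B$, and verifying that $\pi$ is genuinely a locally trivial Grassmann bundle (not merely a set-theoretic fibration) over each rank stratum; both rest on the smoothness of the strata $\Sigma^t\setminus\Sigma^{t-1}$ and on the continuous variation of the tautological kernel bundle over them.
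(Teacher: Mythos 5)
Your argument is correct. Note first that the paper does not prove this proposition at all: it is imported verbatim from Ebeling--Gusein-Zade \cite[Prop.\ 3]{EG}, so there is no in-text proof to compare against. Your derivation via the kernel resolution $\tilde\Sigma^s=\{(A,W):A(W)=0\}$ is a legitimate self-contained proof, and it is in the same spirit as the source (this incidence variety is exactly the Tjurina transform that Ebeling--Gusein-Zade work with). All the key steps check out: the fiber of $\rho$ over $W$ is $\operatorname{Hom}(\mathbb{C}^{n+k}/W,\mathbb{C}^n)\cong\mathbb{C}^{n(s-1)}$; the locus where $\operatorname{tr}(\cdot\,B)$ vanishes on the whole fiber is $\{W:\operatorname{im}B\subset W\}\cong\operatorname{Gr}(k-s+1,k)$ for any full-rank $B$ (genericity of $\ell$ is only needed to the extent that $B$ have rank $n$, a dense open condition); the fiber of $\pi$ over the rank-$(t-1)$ stratum is $\operatorname{Gr}(n+k-s+1,n+k-t+1)$ with Euler characteristic $\binom{n+k-t+1}{s-t}$, and local triviality follows since the kernel bundle is a genuine subbundle over each (smooth, constant-rank) stratum; multiplicativity and additivity of $\chi$ are unproblematic because $\chi=\chi_c$ for complex algebraic varieties. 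I verified the bookkeeping: with $a_1=0$ and $a_t=(-1)^t\binom{n}{t-1}$ for $2\le t\le s-1$ from the inductive hypothesis and Pascal, matching the two expressions for $\chi(\tilde\Sigma^s\cap\ell^{-1}(1))$ determines $a_s$ uniquely (its coefficient is $1$), and the required identity
$$\binom{k}{s-1}=\sum_{j=0}^{s-1}(-1)^{j}\binom{n}{j}\binom{n+k-j}{s-1-j}$$
does follow from extracting $[x^{s-1}]$ in $(1+x)^{n+k}\bigl(1-\tfrac{x}{1+x}\bigr)^{n}=(1+x)^{k}$. The final Pascal step $(-1)^{s-1}\binom{n-1}{s-2}+(-1)^s\binom{n}{s-1}=(-1)^s\binom{n-1}{s-1}$ is also right. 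The one point worth making explicit in a written version is that the inductive hypothesis for all $t<s$ must hold for the \emph{same} generic $\ell$; this is fine because each step only requires $B$ to have rank $n$, so a single dense open set of linear forms works simultaneously.
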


In order to find  the Chern--Schwartz--MacPherson  class of a generic determinantal variety, first we calculate its local Euler obstruction to apply MacPherson's result. To simplify notation, we denote from here the local Euler obstruction of the generic determinantal variety  $\Sigma^{s} \subset \operatorname{Hom}(\mathbb{C}^n, \mathbb{C}^{n+k})$ at a point $p$ by $e_{p}(s,n)$, and if $p=0$ we denote it only by $e(s,n)$. We do not use $k$ in the notation because, as we see in the next result, the formula does not depend on $k$.

\begin{lemma}\label{lemma1} The Euler obstruction of $\Sigma^{s} \subset \operatorname{Hom}(\mathbb{C}^n, \mathbb{C}^{n+k})$ at the origin satisfies the recurrence relation:
$$e(s,n)= \sum_{i=2}^{s}(\overline{\chi}(i,n)-\overline{\chi}(i-1,n))e(s-i+1,n-i+1).$$
\end{lemma}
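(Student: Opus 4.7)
The plan is to apply the Brasselet--L\^{e}--Seade formula (Theorem \ref{BLS}) to $X = \Sigma^{s}$ with its natural Whitney stratification $\{\Sigma^{t}\setminus\Sigma^{t-1}\}_{t=1}^{s}$, and to identify the resulting ingredients using the local product structure of determinantal varieties together with Proposition \ref{Prop3}.

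First I would fix a generic linear form $\ell\colon \operatorname{Hom}(\mathbb{C}^{n},\mathbb{C}^{n+k})\to\mathbb{C}$ and write
$$e(s,n)=\sum_{t=1}^{s}\chi\bigl((\Sigma^{t}\setminus\Sigma^{t-1})\cap B_{\varepsilon}\cap\ell^{-1}(\delta)\bigr)\cdot \operatorname{Eu}_{\Sigma^{t}\setminus\Sigma^{t-1}}(\Sigma^{s}).$$
The stratum $\Sigma^{1}\setminus\Sigma^{0}=\{0\}$ contributes nothing because $\{0\}\cap\ell^{-1}(\delta)=\emptyset$ for $\delta\neq 0$, so the sum effectively starts at $t=2$. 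For $t\geq 2$, the key geometric input is that at a point $p$ of rank $t-1$ one can put the germ $(\Sigma^{s},p)$ in block form and, by taking a Schur complement, obtain a local analytic isomorphism of germs
$$(\Sigma^{s},p)\;\cong\;(\mathbb{C}^{N},0)\times(\Sigma^{s-t+1},0),$$
where the second factor sits inside $\operatorname{Hom}(\mathbb{C}^{n-t+1},\mathbb{C}^{n+k-t+1})$. Using that the local Euler obstruction is multiplicative under products with a smooth factor, this yields $\operatorname{Eu}_{p}(\Sigma^{s})=e(s-t+1,n-t+1)$, and in particular confirms the claimed independence of $k$.

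Next I would compute the Euler characteristic of each stratum intersected with $\ell^{-1}(\delta)\cap B_{\varepsilon}$ by inclusion--exclusion:
$$\chi\bigl((\Sigma^{t}\setminus\Sigma^{t-1})\cap\ell^{-1}(\delta)\cap B_{\varepsilon}\bigr)=\chi(\Sigma^{t}\cap\ell^{-1}(\delta)\cap B_{\varepsilon})-\chi(\Sigma^{t-1}\cap\ell^{-1}(\delta)\cap B_{\varepsilon}).$$
Because every $\Sigma^{t}$ is a cone with apex $0$ and $\ell$ is generic, the small-ball intersection has the homotopy type of the global slice $\Sigma^{t}\cap\ell^{-1}(1)$, so each term has Euler characteristic $1+\overline{\chi}(t,n)$ in the notation of the statement. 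Subtracting, the constants $1$ cancel and we are left with $\overline{\chi}(t,n)-\overline{\chi}(t-1,n)$. Substituting this and the previous identification of $\operatorname{Eu}_{\Sigma^{t}\setminus\Sigma^{t-1}}(\Sigma^{s})$ into the Brasselet--L\^{e}--Seade sum (and renaming $t$ as $i$) produces exactly the asserted recurrence.

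The main obstacle I anticipate is justifying the local product decomposition $(\Sigma^{s},p)\cong(\mathbb{C}^{N},0)\times(\Sigma^{s-t+1},0)$ carefully enough to conclude the equality of local Euler obstructions; this rests on the analytic triviality of the rank stratification (recorded among the basic properties of $\Sigma^{s}$) together with the product formula $\operatorname{Eu}_{(q,0)}(U\times Y)=\operatorname{Eu}_{0}(Y)$ for a smooth factor. A secondary point to verify is that the conic scaling argument legitimately replaces $\Sigma^{t}\cap\ell^{-1}(\delta)\cap B_{\varepsilon}$ by $\Sigma^{t}\cap\ell^{-1}(1)$ inside $\chi$, so that Proposition \ref{Prop3} applies cleanly; the rest of the derivation is then purely bookkeeping.
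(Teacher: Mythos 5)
Your proposal is correct and follows essentially the same route as the paper's proof: the Brasselet--L\^{e}--Seade formula, inclusion--exclusion on the strata, and the identification of the normal slice at a rank-$(t-1)$ point with $\Sigma^{s-t+1}\subset\operatorname{Hom}(\mathbb{C}^{n-t+1},\mathbb{C}^{n+k-t+1})$ (the paper realizes your Schur-complement step via the $\operatorname{GL}_n\times\operatorname{GL}_{n+k}$ action putting $p$ in block form, and invokes the product formula for the Euler obstruction exactly as you do). The conic-scaling point you flag is handled the same way, with the constants $1$ cancelling so that only the reduced Euler characteristics survive.
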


\begin{proof} 
Applying Theorem \ref{BLS} in our case, as $\Sigma^{1} =\{0\}$ and $\Sigma^{0}= \emptyset$, taking a generic linear form $l$ we have 
$$e(s,n)= \sum_{i=2}^{s} \chi ((\Sigma^{i}\setminus \Sigma^{i-1}) \cap B_{\varepsilon} \cap l^{-1}(t_0))e_{p_i}(s,n),$$where $p_i \in \Sigma^{i}\setminus \Sigma^{i-1}$, $0<  |t_0|\ll 1$.

As we work with the topological Euler characteristic, by the inclusion-exclusion principle we have  $$\chi((\Sigma^{i}\setminus \Sigma^{i-1}) \cap B_{\varepsilon} \cap l^{-1}(t_0))=\chi(\Sigma^{i}\cap B_{\varepsilon} \cap l^{-1}(t_0)) -\chi((\Sigma^{i-1}\cap B_{\varepsilon} \cap l^{-1}(t_0)).$$ 

To simplify notation we rewrite  $$\chi(\Sigma^{i} \cap B_{\varepsilon} \cap l^{-1}(t_0)) = \chi(i,n).$$

Since the stratification is locally holomorphically trivial, working in a neighborhood of $\Sigma^{s},p_{i}$,  we have that $\Sigma^{s}$ is analytic equivalent to $(\Sigma^{s} \cap N )\times B$, where $B$ is a ball of complementary dimension inside $\Sigma^{i}\setminus \Sigma^{i-1}$ and $N$ is a normal slice at $p_i$. Therefore, we can compute $e_{p_{i}}(s,n)$ as follows:$$e_{p_{i}}(s,n)= \operatorname{Eu}_{p_{i}}(\Sigma^{s} \cap N) \cdot \operatorname{Eu}_{p_i}(B) = \operatorname{Eu}_{p_{i}}(\Sigma^{s} \cap N),$$(see  \cite[pp.\ 423]{M}).

As in \cite{EG}, we can use the action of $\operatorname{GL}_n(\mathbb C)\times \operatorname{GL}_{n+k}(\mathbb C)$ into $\operatorname{Hom}(\mathbb{C}^n, \mathbb{C}^{n+k})$, to assume 
%Following \cite{EG} ideas, due to the group action we can assume 
$p_i$ is a block matrix with an $i-1$\nota{pq ()?} identity matrix  in the upper left corner and the other blocks $0$. Then the normal slice is just the lower right block. If we take a matrix in this lower right block whose rank is $k$, then the rank of the whole matrix is $k+i-1$, so the matrices of rank $s-i$ in the lower right block are the matrices of rank $s-1$ in the normal slice. Therefore  $\{\Sigma^{s} \cap N\}$ is isomorphic to $\Sigma^{s-(i-1)} \subset \operatorname{Hom}(n-i+1,n-i+k+1)$ and therefore we have that,
\begin{equation}\label{cut}e_{p_i}(s,n)=e(s-i+1, n-i+1).\end{equation} 

Using this information we can rewrite the formula as follows.
$$e(s,n)= \sum_{i=2}^{s}(\chi(i,n)-\chi(i-1,n))e(s-i+1,n-i+1).$$

Or, using the reduced Euler characteristic, we can rewrite the formula as:
%Using the reduced Euler characteristic, denoted by $\overline{\chi}$ and defined as $\overline{\chi}= \chi -1$, where $\chi$ denotes the topological Euler characteristic, we can rewrite this formula as:
$$e(s,n)= \sum_{i=2}^{s}(\overline{\chi}(i,n)-\overline{\chi}(i-1,n))e(s-i+1,n-i+1).$$
\end{proof}

The next result is a technical result about alternating sums of products of binomial numbers that we need in the sequel. For techniques in combinatorics we refer \cite{CK}.

\begin{lemma}\label{lemma2}
$$\sum_{i=2}^{s} (-1)^{i}\begin{pmatrix} n-1\\ i-1\end{pmatrix} \begin{pmatrix} n-i\\ s-i\end{pmatrix} = \begin{pmatrix} n-1\\ s-1\end{pmatrix} .\nota{binom?}$$
\end{lemma}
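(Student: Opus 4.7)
The plan is to reduce the identity to the basic alternating sum $\sum_{j=0}^{r}(-1)^{j}\binom{r}{j}=0$. First I would apply the \emph{subset-of-a-subset} identity
$$\binom{n-1}{i-1}\binom{n-i}{s-i}=\binom{n-1}{s-1}\binom{s-1}{i-1},$$
which is the standard double-counting of pairs $B\subset A\subset\{1,\dots,n-1\}$ with $|A|=s-1$ and $|B|=i-1$: on the left one picks $B$ first and then extends it by the $s-i$ remaining elements of $A\setminus B$, while on the right one picks $A$ first and then selects $B$ inside $A$.

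Substituting this into the left-hand side of the claim and pulling the factor $\binom{n-1}{s-1}$, which is independent of $i$, out of the sum, the identity reduces to the $n$-free statement
$$\sum_{i=2}^{s}(-1)^{i}\binom{s-1}{i-1}=1.$$
After the index shift $j=i-1$ this becomes
$$-\sum_{j=1}^{s-1}(-1)^{j}\binom{s-1}{j}=1,$$
which is immediate from the binomial theorem applied to $(1-1)^{s-1}$: one has $\sum_{j=0}^{s-1}(-1)^{j}\binom{s-1}{j}=0$ for $s\geq 2$, so the tail $\sum_{j=1}^{s-1}(-1)^{j}\binom{s-1}{j}$ equals $-\binom{s-1}{0}=-1$. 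The border case $s=2$ reduces to the single term $(-1)^{2}\binom{n-1}{1}\binom{n-2}{0}=n-1=\binom{n-1}{1}$ and can also be checked by direct inspection.

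I do not expect any real obstacle here. The only content of the proof is choosing the right combinatorial identity at the start; once the subset-of-a-subset decomposition is in place, the remainder is a routine alternating-sum cancellation. If one preferred a more mechanical route, the same conclusion could be reached by extracting the coefficient of $x^{s-1}$ in $(1+x)^{n-1}\bigl(1-(1+x)\bigr)^{\text{stuff}}$-type generating-function manipulations, but the combinatorial route above is shorter.
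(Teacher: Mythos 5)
Your proof is correct. The identity $\binom{n-1}{i-1}\binom{n-i}{s-i}=\binom{n-1}{s-1}\binom{s-1}{i-1}$ is exactly right (both sides equal $\frac{(n-1)!}{(n-s)!\,(i-1)!\,(s-i)!}$), and the reduction to $\sum_{j=1}^{s-1}(-1)^{j}\binom{s-1}{j}=-1$ via $(1-1)^{s-1}=0$ is valid for every $s\ge 2$, which is all that is needed.

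The paper proves the lemma by splitting into two cases according to the parity of $s$: for $s$ even it pairs the terms $i$ and $s+1-i$ (which carry equal magnitudes but opposite signs) so that everything cancels except the unpaired term $i=s$; for $s$ odd it carries out essentially your computation, writing the product of binomials in the factorial form above, factoring out $\frac{(n-1)!}{(n-s)!(s-1)!}=\binom{n-1}{s-1}$, and invoking $((-1)+1)^{s-1}=0$. Your argument is therefore the paper's second case, organized via the subset-of-a-subset (trinomial revision) identity and applied uniformly; it shows the parity split is unnecessary, since the vanishing of $\sum_{j=0}^{s-1}(-1)^{j}\binom{s-1}{j}$ holds for all $s\ge 2$ regardless of parity. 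This makes your version shorter and arguably cleaner than the paper's. (Incidentally, the paper's Case 2 ends with a typographical slip, displaying $\binom{n}{s-1}$ where the derivation yields $\binom{n-1}{s-1}$; your statement of the conclusion is the correct one.)
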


\begin{proof}
There are 2 cases depending on the parity of $s$.

%\begin{description} \item[Case 1] 
\medskip\noindent\textbf{Case 1.} Let us first prove the $s$ even case.
Comparing $\begin{pmatrix} n-1\\ i-1\end{pmatrix} \begin{pmatrix} n-i\\ s-i\end{pmatrix}$ and $\begin{pmatrix} n-1\\ s-i\end{pmatrix} \begin{pmatrix} n-(s+1-i)\\ s-(s+1-i)\end{pmatrix}$, they are the same. The signs in the sum are $(-1)^{i}$ and $(-1)^{s+1-i}$ which differ for $s$ even, so the terms cancel, except for the last term $$(-1)^{s} \begin{pmatrix} n-1\\ s-1\end{pmatrix} = \begin{pmatrix} n-1\\ s-1\end{pmatrix}.$$

%\item[Case 2]
\medskip\noindent\textbf{Case 2.} Let us now prove the $s$ odd case.
\begin{align*}
\begin{pmatrix} n-1\\ i-1\end{pmatrix} \begin{pmatrix} n-i\\ s-i\end{pmatrix} &= \frac{(n-1)\cdots (n-i+1)}{(i-1)!}  \frac{(n-i)\cdots (n-s+1)}{(s-i)!}\\ &= \frac{(n-1)!}{(n-s)!(i-1)! (s-i)!}
\end{align*} and therefore  the sum can be rewritten as
\begin{multline*}
\frac{(n-1)!}{(n-s)!(s-1)!}(-1)\sum_{i=2}^{s}(-1)^{i-1}\frac{(s-1)!}{(i-1)!(s-i)!} = \\(-1)\frac{(n-1)!}{(n-s)!(s-1)!} \sum_{i=1}^{s-1}(-1)^{i}\frac{(s-1)!}{(i)!(s-1-i)!}
\end{multline*}
%$$ \frac{(n-1)!}{(n-s)!(s-1)!}(-1)\sum_{i=2}^{s}(-1)^{i-1}\frac{(s-1)!}{(i-1)!(s-i)!}=$$
%
%$$ (-1)\frac{(n-1)!}{(n-s)!(s-1)!} \sum_{i=1}^{s-1}(-1)^{i}\frac{(s-1)!}{(i)!(s-1-i)!}$$
Now, 
$$0= ((-1)+1)^{s-1}= \sum_{i=0}^{s-1}(-1)^{i}\begin{pmatrix} s-1\\ i\end{pmatrix}$$
and so $$\sum_{i=1}^{s-1}(-1)^{i}\frac{(s-1)!}{i!(s-1-i)!}= -1.$$
But $$ \frac{(n-1)!}{(s-1)!((n-1)-(s-1))!} =  \begin{pmatrix} n-1\\ s-1\end{pmatrix}.$$
Therefore  $$\sum_{i=2}^{s} (-1)^{i}\begin{pmatrix} n-1\\ i-1\end{pmatrix} \begin{pmatrix} n-i\\ s-i\end{pmatrix}= \begin{pmatrix} n\\ s-1\end{pmatrix}.$$
%Applaying these in our previous equation  we have$$e(s,n) = \sum_{i=0}^{s-1} \begin{pmatrix}n-s+i\\ i\end{pmatrix},$$for $1 \leq s \leq n$.
 %But, if we apply the Chuh Shih Chieh identity (\cite{CK} p79 example 2.5.1 ii)
%$$\binom{k} {0}+\binom{k+1} {1}\dots +\binom{k+l} {l}=\binom{k+l+1} {l}$$
%to this formula  with $k=n-s$, and we get 
%$$e(s,n) =\binom{n} {s-1}.$$
%\end{description}
\end{proof}

\begin{theorem}\label{EulerFormula} Let $\Sigma^{s} \subset \operatorname{Hom}(\mathbb{C}^n, \mathbb{C}^{n+k})$ be a generic determinantal variety defined as above, we have $$e(s,n) =\binom{n} {s-1},$$for $1 \leq s \leq n$.
\end{theorem}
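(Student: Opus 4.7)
The plan is to prove the formula by induction on $s$ (with $n \geq s$), using the recurrence of Lemma \ref{lemma1}, the value of $\overline{\chi}(i,n)$ from Proposition \ref{Prop3}, and the combinatorial identity of Lemma \ref{lemma2}.

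The base case is $s = 1$. Here $\Sigma^{1}$ consists of matrices of rank less than $1$, i.e.\ the single point $\{0\}$, so $e(1,n) = \operatorname{Eu}_{0}(\{0\}) = 1 = \binom{n}{0}$, matching the claim.

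For the inductive step, I would assume the formula $e(s',n') = \binom{n'}{s'-1}$ holds for every pair with $s' < s$ (and $n' \geq s'$). Lemma \ref{lemma1} then gives
$$e(s,n) = \sum_{i=2}^{s} \bigl(\overline{\chi}(i,n) - \overline{\chi}(i-1,n)\bigr)\, e(s-i+1,\,n-i+1).$$
By Proposition \ref{Prop3}, $\overline{\chi}(i,n) = (-1)^{i}\binom{n-1}{i-1}$, so the coefficient becomes
$$\overline{\chi}(i,n) - \overline{\chi}(i-1,n) = (-1)^{i}\binom{n-1}{i-1} + (-1)^{i}\binom{n-1}{i-2} = (-1)^{i}\binom{n}{i-1},$$
where the second equality is Pascal's rule. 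Since in each summand the indices satisfy $s-i+1 < s$ and $n-i+1 < n$ (as $i \geq 2$), the inductive hypothesis applies and yields $e(s-i+1, n-i+1) = \binom{n-i+1}{s-i}$. Substituting,
$$e(s,n) = \sum_{i=2}^{s} (-1)^{i}\binom{n}{i-1}\binom{n-i+1}{s-i}.$$

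To finish, I would apply Lemma \ref{lemma2} with $n$ replaced by $n+1$, which gives exactly
$$\sum_{i=2}^{s} (-1)^{i}\binom{n}{i-1}\binom{n-i+1}{s-i} = \binom{n}{s-1},$$
and conclude $e(s,n) = \binom{n}{s-1}$. I do not anticipate a genuine obstacle: the work has already been done in Lemmas \ref{lemma1} and \ref{lemma2} together with Proposition \ref{Prop3}. The only place where a slip could occur is the index bookkeeping when matching our sum with the shifted form of Lemma \ref{lemma2}, and verifying that the inductive hypothesis is available for all pairs $(s-i+1, n-i+1)$ arising in the recursion (which it is, since both $s$ and the ambient size decrease by the same amount $i-1 \geq 1$, keeping $n-i+1 \geq s-i+1$).
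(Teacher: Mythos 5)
Your proof is correct and follows essentially the same route as the paper: induction on $s$, feeding Proposition \ref{Prop3} into the recurrence of Lemma \ref{lemma1} and closing with Lemma \ref{lemma2}. The only difference is bookkeeping — you apply Pascal's rule to the coefficients $\overline{\chi}(i,n)-\overline{\chi}(i-1,n)$ and then invoke Lemma \ref{lemma2} with $n$ shifted to $n+1$, whereas the paper first rearranges the sum (summation by parts) so that Pascal's rule acts on the differences $e(s-i+1,n-i+1)-e(s-i,n-i)$ and Lemma \ref{lemma2} is applied with $n$ unshifted; your version is, if anything, slightly more streamlined.
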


\begin{proof} To prove this result we use Lemma \ref{lemma1}, Lemma \ref{lemma2} and induction.

First note that $e(1,n)=1$, for $n \in \mathbb{Z}$,  $n \geq 1$. In this case $\Sigma^{1} =\{0\}$, and  the Local Euler obstruction of one point is its Euler characteristic.

From Lemma \ref{lemma1} we have
$$e(s,n)= \sum_{i=2}^{s}(\overline{\chi}(i,n)-\overline{\chi}(i-1,n))e(s-i+1,n-i+1).$$

Distributing the Euler obstruction terms and expanding  the sum we have:
\begin{align*}
\overline{\chi}(2,n)e(s-1,n-1) &- \overline{\chi}(1,n)e(s-1,n-1) +\\
\overline{\chi}(3,n)e(s-2,n-2) &- \overline{\chi}(2,n)e(s-2,n-2) +\\
\overline{\chi}(4,n)e(s-3,n-3) &- \overline{\chi}(3,n)e(s-3,n-3) +\\
 & \vdots \\
\overline{\chi}(s,n)e(1,n-s+1) &- \overline{\chi}(s-1,n)e(1,n-s+1). 
\end{align*}

And as $\overline{\chi}(1,n) = -1$, we can rewrite the formula as:
$$e(s,n)= 1 \cdot e(s-1,n-1) + \sum_{i=2}^{s} \overline{\chi}(i,n)(e(s-i+1,n-i+1)-e(s-i,n-i)).$$

Let us consider $e(s-i+1,n-i+1)- e(s-i,n-i)$. Using the inductive hypothesis we have that
\begin{multline*}
e(s-i+1,n-i+1)- e(s-i,n-i) = \\
\binom{n-i+1}{s-1} - \binom{n-i}{s-i-1}
= \binom{n-i}{s-i}.
\end{multline*}

Therefore it follows that 
\begin{equation}
e(s,n)=e(s-1,n-1)+ \sum_{i=2}^{s}  \overline{\chi}(i,n) \binom{n-i}{s-i}.
\end{equation}

By Proposition \ref{Prop3} we have $$\overline{\chi}(i,n) = (-1)^{i} \binom{n-1}{i-1},$$ so we get 
$$e(s,n)= e(s-1,n-1) + \sum_{i=2}^{s} (-1)^{i}\binom{n-1}{i-1} \binom{n-i}{s-i}.$$

Now, from Lemma \ref{lemma2} we have 
$$\sum_{i=2}^{s} (-1)^{i}\binom{n-1}{i-1} \binom{n-i}{s-i}  = \binom{n-1}{s-1}.$$

Then $e(s,n)=e(s-1,n-1)+\binom{n-1}{s-1}.$ By the inductive hypothesis we have
 $e(s,n)=e(s-1,n-1)+ \binom{n-1}{s-1} = \binom{n-1}{s-2}  + \binom{n-1}{s-1} = \binom{n}{s-1}.$
%Applaying these in our previous equation  we have$$e(s,n) = \sum_{i=0}^{s-1} \begin{pmatrix}n-s+i\\ i\end{pmatrix},$$for $1 \leq s \leq n$.
 %But, if we apply the Chuh Shih Chieh identity (\cite{CK} p79 example 2.5.1 ii)
%$$\binom{k} {0}+\binom{k+1} {1}\dots +\binom{k+l} {l}=\binom{k+l+1} {l}$$
%to this formula  with $k=n-s$, and we get 
%$$e(s,n) =\binom{n} {s-1}.$$
\end{proof}

\begin{remark}

 The last result has a pretty accompanying graphic. For this part, fix $k \in \mathbb{Z}^{+}$, $k \geq 1$, and for  $i \in \mathbb{Z}^{+}$, $1\leq i \leq n+1$, let us denote $\Sigma^{i} \subset \operatorname{Hom}(n,n+k)$ by $\Sigma^{i}_{n}$.

On the one hand we have a triangle of spaces and maps. In the apex of the triangle (row zero) we have $0\in \operatorname{Hom}(\mathbb{C}^0, \mathbb{C}^k)$. Row $1$ is  $0\in \operatorname{Hom}(\mathbb{C}^1, \mathbb{C}^{k+1})$ and  $\operatorname{Hom}(\mathbb{C}^1, \mathbb{C}^{k+1})$. We have maps from the element in row $0$ to each element in row $1$ given by the inclusions of $\mathbb{C}^k$ to $\mathbb{C}^{k+1}$, and projection of $\mathbb{C}^1$ to $\mathbb{C}^0$. Row $2$ is $\Sigma^1 = \{0\}\in \operatorname{Hom}(\mathbb{C}^2, \mathbb{C}^{k+2})$, $\Sigma^2\subset \operatorname{Hom}(\mathbb{C}^2, \mathbb{C}^{k+2})$, and $\operatorname{Hom}(\mathbb{C}^2, \mathbb{C}^{k+2})$.
  
Again there are maps given by projection and inclusion from   elements of row $1$ into adjacent pairs of elements of row $2$. Then row $n$ consists of the spaces $\Sigma^{i}_{n}\subset \operatorname{Hom}(\mathbb{C}^n, \mathbb{C}^{n+k})$, $1\le i\le n+1$, with maps from the previous row to adjacent pairs of elements of this row. 
The other triangle is Pascal's triangle. Then our theorem says that the local Euler obstruction takes the triangle of spaces to Pascal's triangle. 
\begin{figure}[!htb]
\centering
\includegraphics[scale=0.8] {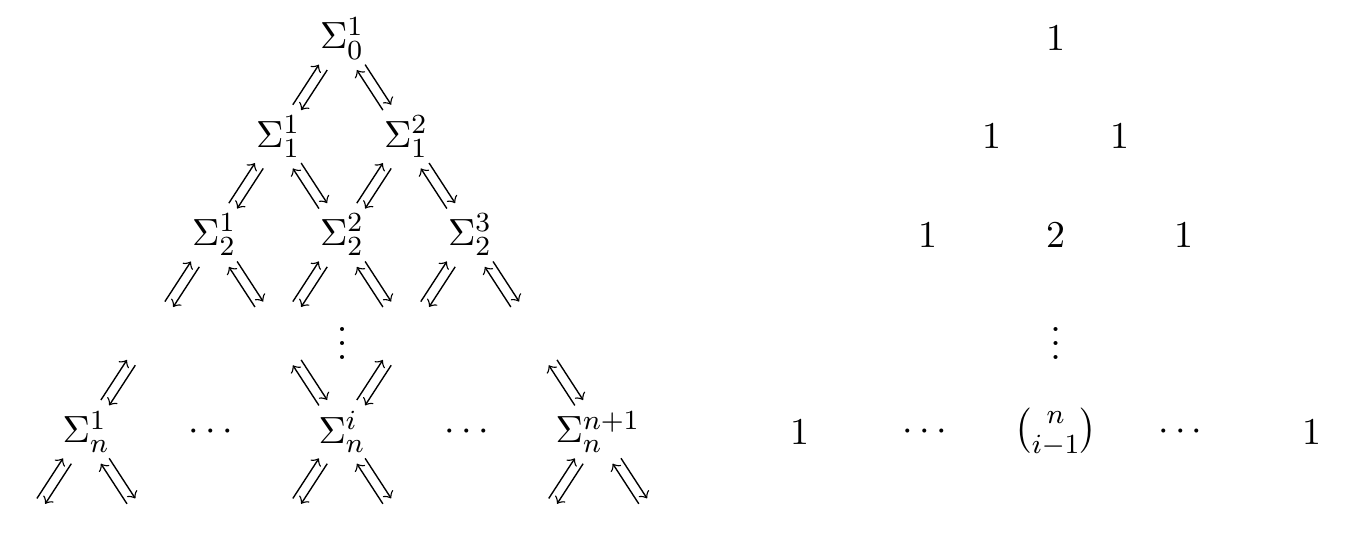}
\caption{Triangles}
%\label{Rotulo}
\end{figure}
\end{remark}

%\begin{remark}
%The Euler obstruction is a constructible function,  $$e_{p}(s,n)=\sum_{i=1}^s %\alpha_{i}{\mathds 1}_{\Sigma^i}(p),$$ where ${\mathds 1} _{\Sigma^i}$ is $1$ on $\Sigma^i$, $0$ elsewhere and $\alpha_{i}$ is the value of the Euler obstruction of $\Sigma^{n}$ at a generic point of $\Sigma^i$. As we saw in the proof of Lemma \ref{lemma1}, $\alpha_{i}=e(s-i+1, n-i+1)$. Using our formula to calculate $\alpha_{i}$ we have the next  corollary.
%\end{remark}

%\begin{corollary}In the above setup we have 
%$$ e_{p}(s,n)=\sum_{i=1}^s \binom{n-i+1}{s-i}{\mathds 1} _{\Sigma^i}(p).$$
%\end{corollary}

\begin{remark}
The Euler obstruction is a constructible function,  $$e_{p}(s,n)=\sum_{i=1}^s \alpha_{i}{\mathds 1}_ {(\Sigma^i\setminus \Sigma^{i-1})}(p),$$ where ${\mathds 1}_ {(\Sigma^i\setminus \Sigma^{i-1})}(p)$ is $1$ on $\Sigma^i\setminus \Sigma^{i-1}$, $0$ elsewhere and $\alpha_{i}$ is the value of the Euler obstruction of $\Sigma^{n}$ at any  point of $\Sigma^i \setminus \Sigma^{i-1}$. As we saw in the proof of Lemma \ref{lemma1}, $\alpha_{i}=e(s-i+1, n-i+1)$. Using our formula to calculate $\alpha_{i}$ we have the next  corollary.
\end{remark}

\begin{corollary}In the above setup we have 
$$ e_{p}(s,n)=\sum_{i=1}^s \binom{n-i+1}{s-i}{\mathds 1} _{(\Sigma^i\setminus \Sigma^{i-1})}(p).$$
\end{corollary}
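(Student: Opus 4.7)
The plan is to assemble the corollary directly from two ingredients already at hand: the local triviality of the Whitney stratification $\{\Sigma^t \setminus \Sigma^{t-1}\}$ of $\Sigma^s$, which pins down the coefficients of the constructible function $p \mapsto e_p(s,n)$, and the closed-form expression $e(s,n) = \binom{n}{s-1}$ from Theorem \ref{EulerFormula}. No new geometry is needed; the proof is purely a substitution.

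First I would recall that the local Euler obstruction is a constructible function with respect to the stratification $\{\Sigma^i \setminus \Sigma^{i-1}\}_{i=1}^{s}$ of $\Sigma^s$, so that it admits a unique expression
\begin{equation*}
e_p(s,n) = \sum_{i=1}^{s} \alpha_i \, \mathds{1}_{\Sigma^i \setminus \Sigma^{i-1}}(p),
\end{equation*}
where $\alpha_i$ is the constant value of $e_p(s,n)$ on the stratum $\Sigma^i \setminus \Sigma^{i-1}$.

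Next I would identify the $\alpha_i$. This has in fact already been done inside the proof of Lemma \ref{lemma1}: choosing a representative $p_i \in \Sigma^i \setminus \Sigma^{i-1}$ via the $\operatorname{GL}_n(\mathbb{C}) \times \operatorname{GL}_{n+k}(\mathbb{C})$-action, the normal slice to $\Sigma^i \setminus \Sigma^{i-1}$ at $p_i$ is isomorphic to $\operatorname{Hom}(\mathbb{C}^{n-i+1}, \mathbb{C}^{n-i+k+1})$, and under this identification $\Sigma^s \cap N$ corresponds to $\Sigma^{s-i+1}$. Combined with the product formula $e_{p_i}(s,n) = \operatorname{Eu}_{p_i}(\Sigma^s \cap N) \cdot \operatorname{Eu}_{p_i}(B)$ for the local triviality decomposition, this gives equation (\ref{cut}), namely $\alpha_i = e(s-i+1, n-i+1)$.

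Finally I would apply Theorem \ref{EulerFormula} to this Euler obstruction at the origin of the smaller generic determinantal variety:
\begin{equation*}
e(s-i+1, n-i+1) = \binom{n-i+1}{(s-i+1)-1} = \binom{n-i+1}{s-i},
\end{equation*}
and substitute this into the constructible-function decomposition to obtain the claimed formula. The only thing to check is the degenerate index $i=1$, where the summand is $\binom{n}{s-1}\mathds{1}_{\{0\}}$ and the coefficient indeed equals $e(s,n)$ from Theorem \ref{EulerFormula}. There is no real obstacle here: the corollary is a direct bookkeeping consequence of the two preceding results, and the ``hard part'' was already done in proving Theorem \ref{EulerFormula} itself.
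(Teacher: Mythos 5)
Your proposal is correct and follows exactly the paper's route: decompose $e_p(s,n)$ as a constructible function on the rank strata, read off $\alpha_i = e(s-i+1,n-i+1)$ from the normal-slice identification in the proof of Lemma \ref{lemma1} (equation \eqref{cut}), and substitute the closed form from Theorem \ref{EulerFormula}. Nothing is missing.
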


Recall that an algebraic cycle on a variety $X$ is a finite formal linear sum $\sum n_{i}[X_{i}]$ where the $n_{i}$ are integers and the $X_{i}$ are irreducible subvarieties of $X$. Taking $X= \Sigma^{s}$ and remembering that all $\Sigma^{i} \subset \Sigma^{s}$, where $i= 1, \dots, s$ are irreducible subvarieties and using Theorem \ref{EulerFormula}  we get a formula for the local Chern--Schwartz--MacPherson cycle of $\Sigma^{n}$, denoted by $[\operatorname{csm}(\Sigma^{s})]$.

The next result is an interesting property of alternating sums of binomials products, whose elements are chosen in the Pascal's triangle in a ``V'' distribution. This lemma is essential to define the Chern--Schwartz--MacPherson cycle.

\begin{lemma}\label{V-property}

$$ \sum_{i=0}^{s-1} (-1)^{s-1+i}\binom{n-i-1}{s-i-1}\binom{n}{i}=1.$$

\end{lemma}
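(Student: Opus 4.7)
The plan is to recognize this identity as a disguised Vandermonde convolution. I will first reindex the sum, then use the upper negation identity to rewrite one binomial coefficient with a negative upper entry, and finally collapse the result via Vandermonde's identity.

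First I would substitute $j = s-1-i$. Under this change of variables, the sign becomes $(-1)^{s-1+i} = (-1)^{2(s-1)-j} = (-1)^j$, and the two binomial coefficients become $\binom{n-s+j}{j}$ and $\binom{n}{s-1-j}$. So the identity to prove becomes
$$\sum_{j=0}^{s-1} (-1)^{j}\binom{n-s+j}{j}\binom{n}{s-1-j}=1.$$

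Next, I would apply the standard upper negation identity $\binom{-a}{j}=(-1)^{j}\binom{a+j-1}{j}$, which with $a=n-s+1$ gives
$$(-1)^{j}\binom{n-s+j}{j}=\binom{s-n-1}{j}.$$
Substituting this in, the sum transforms into
$$\sum_{j=0}^{s-1}\binom{s-n-1}{j}\binom{n}{s-1-j}.$$
Because $\binom{n}{s-1-j}=0$ whenever $s-1-j<0$, the range of summation can be extended to all nonnegative integers without changing the sum.

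Finally, this is precisely the Vandermonde convolution
$$\sum_{j\ge 0}\binom{s-n-1}{j}\binom{n}{s-1-j}=\binom{(s-n-1)+n}{s-1}=\binom{s-1}{s-1}=1,$$
which closes the argument. The main obstacle is purely notational: recognizing that the ``V''-shape of the indices in the original sum is the Vandermonde pattern once the alternating sign is absorbed into the upper argument via negation. Once that is seen, no case analysis on the parity of $s$ (unlike in Lemma \ref{lemma2}) is needed, since the identity holds as a polynomial identity in $n$.
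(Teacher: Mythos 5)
Your proof is correct, and it takes a genuinely different route from the paper. The paper proves Lemma \ref{V-property} by induction on $s$: it peels off the $i=0$ term, shifts the index, applies Pascal's rule to $\binom{n}{j-1}$, invokes Lemma \ref{lemma2} to evaluate one of the resulting sums, and identifies the leftover sum with the inductive hypothesis. Your argument is direct and self-contained: the reindexing $j=s-1-i$ followed by upper negation $(-1)^j\binom{n-s+j}{j}=\binom{s-n-1}{j}$ turns the alternating sum into the Vandermonde convolution $\sum_j\binom{s-n-1}{j}\binom{n}{s-1-j}=\binom{s-1}{s-1}=1$, with the range extension justified because the extra terms vanish. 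This buys a shorter proof with no induction, no parity case analysis, and no dependence on Lemma \ref{lemma2} (whose own proof in the paper is by a separate two-case argument); what it costs is only the need to work with binomial coefficients with negative upper argument and the generalized (polynomial) form of Vandermonde, both completely standard. All steps check out.
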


\begin{proof}

The  proof is  by induction, using Lemma \ref{lemma2}.

For any $n \in \mathbb{N}$ and $s=1$ we have$$(-1)^{0}\binom{n-1}{0}\binom{n}{0}=1.$$
We suppose by induction that the formula is true for $s-1$, that is:$$\sum_{i=0}^{s-2}(-1)^{s-2+i}\binom{n-i-1}{s-i-2}\binom{n}{i}=1.$$
We can rewrite \begin{equation}\label{Binomial}\sum_{i=0}^{s-1} (-1)^{s-1+i}\binom{n-i-1}{s-i-1}\binom{n}{i}\end{equation}as
$$(-1)^{s-1}\binom{n-1}{s-1} + \sum_{i=1}^{s-1} (-1)^{s-1+i}\binom{n-i-1}{s-i-1}\binom{n}{i}.$$
Making the change of variables $j=i+1$, we get 
%$$\text{Equation } \ref{Binomial} = (-1)^{s-1}\binom{n-1}{s-1}+ \sum\limits_{j=2}^{s}(-1)^{s+j}\binom{n-j}{s-j}\binom{n}{j-1}.$$
$$\text{Equation \eqref{Binomial}} = (-1)^{s-1}\binom{n-1}{s-1}+ \sum\limits_{j=2}^{s}(-1)^{s+j}\binom{n-j}{s-j}\binom{n}{j-1}.$$
Using Lemma \ref{lemma2} we have,
\begin{multline*}
\text{Equation \eqref{Binomial}} = (-1)^{s-1}\binom{n-1}{s-1}+\\ (-1)^{s} \sum\limits_{j=2}^{s}(-1)^{j}\binom{n-j}{s-j}\left[\binom{n-1}{j-1}+\binom{n-1}{j-2}\right].
\end{multline*}
Then
\begin{multline*}
\text{Equation \eqref{Binomial}} = (-1)^{s-1}\binom{n-1}{s-1}+ (-1)^{s}\binom{n-1}{s-1}+\\ (-1)^{s} \sum\limits_{j=2}^{s}(-1)^{j}\binom{n-j}{s-j}\binom{n-1}{j-2}.
\end{multline*}
Now,\enlargethispage{\baselineskip}
$$(-1)^{s} \sum\limits_{j=2}^{s}(-1)^{j}\binom{n-j}{s-j}\binom{n-1}{j-2}= \sum\limits_{i=0}^{s-2}(-1)^{s-2+i}\binom{n-i-1}{s-i-2}\binom{n}{i}=1. $$
\end{proof}

\begin{theorem} The local Chern--Schwartz--MacPherson cycle of the algebraic variety   $\Sigma^{s}\subset \operatorname{Hom}(n,n+k)$ is $$[\operatorname{csm}(\Sigma^{s})]= \sum_{i=0}^{s-1} (-1)^{s-1+i} \binom{n-i-1}{s-i-1}{[\Sigma^{i+1}]}$$\end{theorem}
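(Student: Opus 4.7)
The plan is to use the fact, recalled just above the statement, that MacPherson's map $T$ from algebraic cycles to constructible functions is an isomorphism, and that the local CSM cycle $[\operatorname{csm}(\Sigma^s)]$ is by definition the preimage $T^{-1}(1_{\Sigma^s})$. So to prove the theorem it suffices to set
$$\alpha = \sum_{i=0}^{s-1}(-1)^{s-1+i}\binom{n-i-1}{s-i-1}[\Sigma^{i+1}]$$
and verify that the constructible function $T(\alpha)$ equals the constant function $1$ on $\Sigma^s$.

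Since the stratification $\{\Sigma^j\setminus\Sigma^{j-1}\}_{j=1}^{s}$ is a Whitney stratification of $\Sigma^s$ and the local Euler obstruction is constant along the strata, it suffices to check, for each $j$ with $1\le j\le s$ and each point $p\in\Sigma^j\setminus\Sigma^{j-1}$, that
$$\sum_{i=1}^{s}(-1)^{s+i}\binom{n-i}{s-i}\operatorname{Eu}_p(\Sigma^i)=1.$$
For $i<j$ the point $p$ is not in $\Sigma^i$, so $\operatorname{Eu}_p(\Sigma^i)=0$. For $i\ge j$, the normal slice computation carried out in the proof of Lemma \ref{lemma1} (equation \eqref{cut}), applied to $\Sigma^i$ rather than $\Sigma^s$, together with Theorem \ref{EulerFormula}, gives
$$\operatorname{Eu}_p(\Sigma^i)=e(i-j+1,n-j+1)=\binom{n-j+1}{i-j}.$$

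Substituting, the identity to be proved becomes
$$\sum_{i=j}^{s}(-1)^{s+i}\binom{n-i}{s-i}\binom{n-j+1}{i-j}=1.$$
After the change of variables $a=i-j$, $m=n-j+1$, $t=s-j+1$ (so that $(-1)^{s+i}=(-1)^{t-1+a}$, $\binom{n-i}{s-i}=\binom{m-a-1}{t-a-1}$, $\binom{n-j+1}{i-j}=\binom{m}{a}$), this is precisely
$$\sum_{a=0}^{t-1}(-1)^{t-1+a}\binom{m-a-1}{t-a-1}\binom{m}{a}=1,$$
which is Lemma \ref{V-property} applied to the pair $(m,t)$ in place of $(n,s)$.

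The main conceptual step is recognizing that everything reduces, via $T^{-1}$, to a stratumwise identity, and the only nontrivial input beyond this reduction is the Euler obstruction formula of Theorem \ref{EulerFormula} together with the normal slice isomorphism already exploited in Lemma \ref{lemma1}. The main technical obstacle is purely bookkeeping: keeping track of the indexing shift between the cycle coefficients (indexed by $i=0,\dots,s-1$ with $[\Sigma^{i+1}]$) and the stratum under consideration (indexed by $j$) so that the resulting binomial identity is correctly identified as a relabelling of Lemma \ref{V-property}; once this is done, no further calculation is required.
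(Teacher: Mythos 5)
Your proposal is correct and follows essentially the same route as the paper: both verify stratum by stratum that the proposed cycle has Euler obstruction identically $1$, using the normal-slice relation from Lemma \ref{lemma1}, the formula of Theorem \ref{EulerFormula}, and a change of variables reducing the resulting binomial identity to Lemma \ref{V-property}. The only difference is cosmetic re-indexing of the sum.
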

\begin{proof}

As remarked in $(6.1.5)$ of \cite{LT1}, the essential difficulty to calculate the Chern--Schwartz--MacPherson class of a singular analytic space $X$ is to construct a cycle $\sum n_{j}[X_{j}]$ of $X$ such that
$$\sum n_{j}\operatorname{Eu}_{p}(X_{j})=1,$$for every $p\in X$. A cycle that satisfies this condition is called Chern--Schwartz--MacPherson cycle of the variety. For more details for the construction in a more general setting see Section 6 of \cite{LT1}.

It means that, in our case, we only need to show that $$\operatorname{Eu}_p(\sum_{i=0}^{s-1} (-1)^{s-1+i} \binom{n-i-1}{s-i-1}{[\Sigma^{i+1}]})=1$$ for all points $p\in \Sigma^{s}$.
$$\operatorname{Eu}_p(\sum_{i=0}^{s-1} (-1)^{s-1+i} \binom{n-i-1}{s-i-1}{[\Sigma^{i+1}]})$$
becomes 
$$\sum_{i=0}^{s-1} (-1)^{s-1+i}\binom{n-i-1}{s-i-1}{e_{p}(i+1,n)}.$$

Note first that if $p=0$, using the Theorem \ref{EulerFormula}, this last sum is exactly the sum at Lemma \ref{V-property}, so 
$$\sum\limits_{i=0}^{s-1} (-1)^{s-1+i} \binom{n-i-1}{s-i-1}{e_{p}(i+1,n)} = \sum_{i=0}^{s-1} (-1)^{s-1+i}\binom{n-i-1}{s-i-1}\binom{n}{i}=1.$$

Let us take $p \in \Sigma^j \setminus \Sigma^{j-1}$, $1 \leq j \leq s$. As $p \in \Sigma^{i}$ only for $i \geq j$ and using the relation \eqref{cut}  it is:
$$\sum_{i=j-1}^{s-1} (-1)^{s-1+i} \binom{n-i-1}{s-i-1}e(i-j+2,n-j+1).$$

From Theorem \ref{EulerFormula}  we get
$$\sum_{i=j-1}^{s-1}(-1)^{s-1+i} \binom{n-i-1}{s-i-1}\binom{n-j+1}{i-j+1}.$$

Now, if we make a change of variables with $N= n-j+1$ and $S= s-j+1$ and $l= i-j+1$, we get
$$\sum_{l=0}^{S-1} (-1)^{S-1+l} \binom{N-l-1}{S-l-1}\binom{N}{l},$$and from Lemma \ref{V-property} we have that this sum  is $1$, which finishes the proof. 
\end{proof}

Using this last proposition and by MacPherson definition, we can calculate  the Chern--Schwartz--MacPherson class of $\Sigma^{s}$ as follows.

\begin{theorem}\label{Theo}
In the same setting as above, we have that the total Chern--Schwartz--MacPherson  class of $\Sigma^{s}$ 
is $$c_{\textup{CSM}}(\Sigma^{s})=\sum_{j=0}^{s-1} (-1)^{s-1+j} \binom{n-j-1}{s-j-1}c_{\textup{CM}}(\Sigma^{j+1})$$where $c_{\textup{CM}}(X)$ denotes the total Chern--Mather class of a variety $X$.
\end{theorem}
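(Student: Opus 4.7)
The plan is to apply MacPherson's definition directly: by the last paragraph of his argument, $c_{\textup{CSM}}(X) = c_{\textup{CM}}\,T^{-1}(1_X)$, where $T$ is the isomorphism from algebraic cycles to constructible functions sending $\sum n_i[X_i]$ to the function $p \mapsto \sum n_i \operatorname{Eu}_p(X_i)$. So the computation of $c_{\textup{CSM}}(\Sigma^s)$ reduces to identifying a cycle $\eta$ with $T(\eta) = 1_{\Sigma^s}$, and then pushing $\eta$ through $c_{\textup{CM}}$.

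First I would observe that the preceding theorem in the excerpt does exactly the first part: it proves that the cycle
$$\eta := \sum_{i=0}^{s-1} (-1)^{s-1+i}\binom{n-i-1}{s-i-1}[\Sigma^{i+1}]$$
satisfies $T(\eta)(p) = \sum_{i=0}^{s-1}(-1)^{s-1+i}\binom{n-i-1}{s-i-1}e_p(i+1,n) = 1$ for every $p \in \Sigma^s$. In other words, $\eta = T^{-1}(1_{\Sigma^s})$.

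Next I would apply the Chern--Mather operator $c_{\textup{CM}}$ to this cycle. By its definition on cycles, $c_{\textup{CM}}\bigl(\sum n_j[X_j]\bigr) = \sum n_j\, c_{\textup{CM}}(X_j)$ (where each summand is implicitly pushed forward by the inclusion, as noted in the excerpt). Combining this linearity with MacPherson's formula $c_{\textup{CSM}}(\Sigma^s) = c_{\textup{CM}}\,T^{-1}(1_{\Sigma^s}) = c_{\textup{CM}}(\eta)$ immediately yields
$$c_{\textup{CSM}}(\Sigma^s) = \sum_{j=0}^{s-1} (-1)^{s-1+j}\binom{n-j-1}{s-j-1} c_{\textup{CM}}(\Sigma^{j+1}),$$
which is the desired identity.

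There is no real obstacle: all the genuine work has already been done, in Theorem \ref{EulerFormula} (which computed $e(s,n)$), Lemma \ref{V-property} (the combinatorial identity), and the previous theorem (verifying that $\eta$ is a Chern--Schwartz--MacPherson cycle). The present statement is simply the translation of that cycle-level identity into a class-level identity via MacPherson's natural transformation, so the only thing to check is that the application of $c_{\textup{CM}}$ is compatible with the integer linear combination, which is part of its definition on algebraic cycles.
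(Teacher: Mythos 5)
Your proposal is correct and is exactly the argument the paper intends: the preceding theorem identifies the Chern--Schwartz--MacPherson cycle $\eta$ with $T(\eta)=1_{\Sigma^s}$, and the paper then obtains Theorem \ref{Theo} by applying MacPherson's formula $c_{\textup{CSM}}=c_{\textup{CM}}\,T^{-1}(1_{\Sigma^s})$ together with the linearity of $c_{\textup{CM}}$ on cycles. Nothing is missing.
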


In turn by \cite{ST}, we are able to represent these total Chern classes using representatives of the polar varieties of the $\Sigma^{j}$. In order to express the Chern--Mather classes in terms of polar varieties in \cite{ST}, Tibar and Schurmann used a ``general transversality'' result of Kleiman \cite{Kleiman}.  It was also used in the local analytic context by Teissier in \cite{Te} to establish the existence and the main properties of his ``generic local polar varieties''. Piene proved the corresponding global result for projective varieties \cite{Piene1,Piene2}. 

\begin{definition}\label{polar} Let $X \subset \mathbb{C}^{n}$ be an equidimensional algebraic variety of dimension $d < n$. The $k$-th global polar variety of $X $ $(0\leq k \leq d)$ is the following algebraic set:
$$P_{k}(X)= \overline{\operatorname{Crit}(x_{1}, \dots, x_{d-k+1})_{\mid X_{\textup{reg}}}},$$
with $\operatorname{Crit}(x_{1},\dots,x_{d-k+1})_{\mid X_{\textup{reg}}}$ the usual critical locus of points $x \in X_{\textup{reg}}$ where the differentials of these functions restricted to $X_{\text{reg}}$ are linearly dependent. For general coordinates $x_{i}$, the polar variety $P_{k}(X)$ has codimension $k$ or is empty, for all $0<k\leq d$.  We also can denote the polar varieties by dimension as $P^{d-k}(X)$. This notation will be also used in this text when convenient.
\end{definition}

\begin{proposition}[{\cite[Prop.\ 2.7]{ST}, \cite[Theo.\ 3]{Piene2}}] Let $X \subset \mathbb{C}^{n}$ be an equidimensional algebraic variety of dimension $d < n$. Then, for each $k$, for the $k$-th global polar variety of $X $ we have,
$$c_{\textup{CM}}^{d-k}(X)= (-1)^{d-k}[P_{k}(X)] \in H_{d-k}(X).$$
\end{proposition}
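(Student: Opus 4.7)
The plan is to realize $P_k(X)$ as the image under the Nash map of a Thom--Porteous degeneracy locus on the Nash blowup, and match the class of that locus against the definition $c_{\textup{CM}}(X)=\nu_{*}\bigl(c(N(T))\cap[N(X)]\bigr)$. Fix general linear coordinates $x_1,\dots,x_{d-k+1}$ on $\mathbb{C}^n$, and let $\pi:\mathbb{C}^n\to\mathbb{C}^{d-k+1}$ be the induced linear projection. On $N(X)$ form the composition
$$\varphi: N(T)\hookrightarrow \underline{\mathbb{C}^n}\big|_{N(X)} \xrightarrow{\,d\pi\,} \underline{\mathbb{C}^{d-k+1}}\big|_{N(X)},$$
a bundle map from the rank-$d$ Nash bundle to a trivial rank-$(d-k+1)$ bundle. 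Unwinding the definition, a point $x\in X_{\textup{reg}}$ lies in $P_k(X)$ iff $d\pi|_{T_x X_{\textup{reg}}}$ fails to be surjective, i.e.\ iff $\varphi$ has rank $\le d-k$ at the corresponding point of $\nu^{-1}(x)$. Hence the degeneracy locus $D_{d-k}(\varphi):=\{p\in N(X):\operatorname{rank}\varphi_p\le d-k\}$ is the closure of $\nu^{-1}(P_k(X)\cap X_{\textup{reg}})$, and $\nu$ restricts to a birational morphism $D_{d-k}(\varphi)\to P_k(X)$.

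Next, I would invoke Kleiman's transversality theorem (the ``general transversality'' result cited in the text): for a generic choice of coordinates, the $\operatorname{GL}_n$-action guarantees that $\varphi$ is Porteous-generic, so $D_{d-k}(\varphi)$ has the expected codimension $k$, is reduced, and carries no extraneous components supported in $\nu^{-1}(\operatorname{Sing}(X))$. Working stratum-by-stratum against a Whitney stratification of $X$ compatible with the Gauss map, this is essentially the content of Teissier's treatment of generic local polar varieties and of Piene's projective globalization.

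With transversality in hand, apply the Thom--Porteous formula. For a bundle map $\varphi:E\to F$ with $E$ of rank $d$ and $F$ trivial of rank $d-k+1$, the Porteous determinantal expression for the class of the rank-$\le(d-k)$ locus collapses, after converting between Segre and Chern classes of $E$, to a single Chern class:
$$[D_{d-k}(\varphi)]=(-1)^{d-k}\, c_k(N(T))\cap[N(X)].$$
Pushing forward by $\nu$, using birationality on one side together with $\nu_{*}\bigl(c_k(N(T))\cap[N(X)]\bigr)=c_{\textup{CM}}^{d-k}(X)$ on the other, yields $c_{\textup{CM}}^{d-k}(X)=(-1)^{d-k}[P_k(X)]$ in $H_{d-k}(X)$.

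The main obstacle is twofold. Analytically, one must ensure Kleiman's transversality really rules out extra components of $D_{d-k}(\varphi)$ lying over $\operatorname{Sing}(X)$; this is the heart of the Schurmann--Tibar and Piene arguments. Combinatorially, the sign $(-1)^{d-k}$ must be tracked carefully through Porteous; the cleanest sanity check is the smooth top-degree case $k=d$, where both sides should yield $\chi(X)\cdot[\text{pt}]$ for $X$ smooth projective, pinning down the conventions.
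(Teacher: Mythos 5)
The paper does not actually prove this proposition: it is quoted verbatim from \cite[Prop.\ 2.7]{ST} and \cite[Theo.\ 3]{Piene2}, so there is no internal argument to compare against. Your strategy --- realize $P_k(X)$ as the $\nu$-image of a degeneracy locus of $d\pi\circ(\text{incl.})$ on the Nash bundle and evaluate its class --- is indeed the standard route taken in those sources, and the first paragraph of your sketch (the identification of $D_{d-k}(\varphi)$ with the closure of $\nu^{-1}(P_k(X)\cap X_{\textup{reg}})$, birational onto $P_k(X)$) is correct.

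The gap is in the Porteous step and the sign. For $\varphi\colon E\to F$ with $\operatorname{rk}E=d$, $F$ trivial of rank $d-k+1$, and $r=d-k$, one has $e-r=k$ and $f-r=1$, so Thom--Porteous gives the $k\times k$ determinant $\Delta^{(1)}_{(k)}\bigl(c(F-E)\bigr)=\det\bigl(s_{1+j-i}(E)\bigr)$, not a $1\times1$ expression; it does collapse, via the duality between the two Jacobi--Trudi presentations, but to $(-1)^{k}c_{k}(E)$, not to $(-1)^{d-k}c_k(E)$. (Equivalently: $D_{d-k}(\varphi)$ is the preimage under the Gauss map $\gamma\colon N(X)\to G(d,n)$ of the special Schubert variety $\sigma_{(1^k)}$, whose class pulls back to $c_k(N(T)^{\vee})=(-1)^k c_k(N(T))$.) So your computation yields $c_{\textup{CM}}^{d-k}(X)=(-1)^{k}[P_k(X)]$, and the sign $(-1)^{d-k}$ you assert does not follow from the argument you give --- it appears to have been read off from the statement. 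The sanity check you propose but do not carry out exposes this immediately at $k=0$: $P_0(X)=X$ and the top-dimensional piece of $c_{\textup{CM}}$ is $[X]$, so the displayed identity forces the sign to be $+1$ there, which $(-1)^{d}$ is not for $d$ odd (the printed sign is only consistent with a signed/``dual'' Chern--Mather convention as in \cite{ST}, which you would need to make explicit). A secondary point: since $N(X)$ may be singular, Porteous in expected codimension only produces a class supported on $D_{d-k}(\varphi)$, not automatically its reduced fundamental cycle, and it does not by itself exclude components over $\nu^{-1}(\operatorname{Sing}X)$; the clean fix is exactly Kleiman transversality applied to the Gauss map and the transitive $\operatorname{GL}_n$-action on $G(d,n)$, which you mention but credit to ``Porteous genericity.''
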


Using the last result and the Theorem \ref{Theo} we get the following relation.

\begin{proposition} Let us denote $d= \dim \Sigma^{s}$ and $d_{i} = \dim \Sigma^{i}$, $0\leq i < s$. In the same setup as above, we have,
$$[P_{d-d_{i+1}}(\Sigma^{s})]= (-1)^{s+d_{i+1}}\binom{n-i+1}{s-i-1} c_{\textup{CM}}(\Sigma^{i+1}).$$
\end{proposition}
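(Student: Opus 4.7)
The plan is to derive the proposition by combining the two inputs flagged by the authors: the CSM decomposition of Theorem \ref{Theo} and the polar-variety identification of Proposition 2.7 (from \cite{ST}, \cite{Piene2}). Both sides of the claim live in $H_{d_{i+1}}(\Sigma^s)$, so the natural strategy is to extract the $d_{i+1}$-dimensional component of Theorem \ref{Theo}'s formula and then convert from Chern--Mather language into polar-variety language via Proposition 2.7.

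Concretely, I would first apply Proposition 2.7 to $X=\Sigma^s$ with $k=d-d_{i+1}$, which yields
\[
[P_{d-d_{i+1}}(\Sigma^s)] \;=\; (-1)^{d_{i+1}}\, c_{\text{CM}}^{\,d_{i+1}}(\Sigma^s).
\]
So it suffices to identify the dimension-$d_{i+1}$ piece of $c_{\text{CM}}(\Sigma^s)$. Theorem \ref{Theo} writes
\[
c_{\text{CSM}}(\Sigma^s)=\sum_{j=0}^{s-1}(-1)^{s-1+j}\binom{n-j-1}{s-j-1}\,c_{\text{CM}}(\Sigma^{j+1}),
\]
and each $c_{\text{CM}}(\Sigma^{j+1})$ has no component above dimension $d_{j+1}$; hence in the $d_{i+1}$-dim component only the terms with $j\ge i$ survive, and the $j=i$ term contributes precisely the top-dimensional class $[\Sigma^{i+1}]=c_{\text{CM}}^{\,d_{i+1}}(\Sigma^{i+1})$ (up to a sign from Prop 2.7 with $k=0$). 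The terms with $j>i$ bring in lower-dimensional components of $c_{\text{CM}}(\Sigma^{j+1})$, which via Proposition 2.7 are polar-variety classes $[P_{d_{j+1}-d_{i+1}}(\Sigma^{j+1})]$ of the stratum $\Sigma^{j+1}$.

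The main step is then to show that all the $j>i$ contributions either cancel or reassemble into the single term on the right-hand side. This is the hard part, and I expect it to be the main obstacle: it requires a binomial-coefficient identity of the same flavor as Lemma \ref{lemma2} and Lemma \ref{V-property}, combined with the fact that the determinantal stratification $\{\Sigma^t\setminus\Sigma^{t-1}\}$ is locally holomorphically trivial so that polar varieties of $\Sigma^s$ of the matching codimension $d-d_{i+1}$ are supported on $\Sigma^{i+1}$.

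Finally, the proof reduces to tracking signs and binomials. The exponent $s+d_{i+1}$ should appear by composing the Prop 2.7 sign $(-1)^{d_{i+1}}$ with the Theorem \ref{Theo} sign $(-1)^{s-1+j}$ (evaluated at the surviving index $j=i$ after the cancellations above). The binomial $\binom{n-i+1}{s-i-1}$ should emerge from the alternating sum
\[
\sum_{j\ge i}(-1)^{j-i}\binom{n-j-1}{s-j-1}\binom{\,\cdot\,}{\,\cdot\,}
\]
obtained when the $j>i$ contributions are re-expressed via Prop 2.7 on $\Sigma^{j+1}$; the expected simplification is a Vandermonde-type identity analogous to Lemma \ref{V-property} (which is where I anticipate the bookkeeping to be delicate but ultimately mechanical).
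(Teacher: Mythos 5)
Your overall route --- combine Theorem \ref{Theo} with the polar--variety identification from \cite{ST} and \cite{Piene2} --- is exactly the one the paper indicates; in fact the paper offers no argument beyond the sentence ``Using the last result and the Theorem \ref{Theo} we get the following relation'', so you are not off-track in strategy. But as written your proposal is a plan rather than a proof, and it contains one substantive error. Proposition 2.7 applied to $X=\Sigma^s$ converts $[P_{d-d_{i+1}}(\Sigma^s)]$ into $(-1)^{d_{i+1}}c_{\textup{CM}}^{d_{i+1}}(\Sigma^s)$, a component of the Chern--\emph{Mather} class of $\Sigma^s$; Theorem \ref{Theo}, however, decomposes the Chern--\emph{Schwartz--MacPherson} class $c_{\textup{CSM}}(\Sigma^s)$. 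These are different classes (they differ exactly by the contributions of the lower strata), and your step ``so it suffices to identify the dimension-$d_{i+1}$ piece of $c_{\textup{CM}}(\Sigma^s)$'', followed immediately by extracting components of the CSM formula, silently identifies them. To use Theorem \ref{Theo} here you must first invert it --- for instance, note that its $j=s-1$ term is $c_{\textup{CM}}(\Sigma^s)$ with coefficient $1$ and solve for that term, or work directly with the Chern--Mather cycle via the inverse of MacPherson's isomorphism $T$.

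Second, the step you yourself flag as ``the main obstacle'' --- showing that the $j>i$ contributions cancel or reassemble into a single multiple supported on $\Sigma^{i+1}$ --- is never carried out; you only predict that a Vandermonde-type identity in the spirit of Lemma \ref{lemma2} will do it. That identity is precisely where the coefficient $\binom{n-i+1}{s-i-1}$ and the sign $(-1)^{s+d_{i+1}}$ would have to come from: they do \emph{not} coincide with the $j=i$ coefficient $(-1)^{s-1+i}\binom{n-i-1}{s-i-1}$ of Theorem \ref{Theo}, so the $j>i$ terms cannot simply be discarded, and without the computation the claimed constant is unverified. Finally, note that the statement equates a homogeneous class of dimension $d_{i+1}$ with a multiple of the \emph{total} (inhomogeneous) class $c_{\textup{CM}}(\Sigma^{i+1})$; a complete proof has to make explicit that only the top-dimensional component $[\Sigma^{i+1}]=c_{\textup{CM}}^{d_{i+1}}(\Sigma^{i+1})$ can be meant, as your discussion implicitly assumes.
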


\section{The Euler characteristic of the stabilization of a determinantal variety}

%\subsection{Integral closure of modules}

The key tool for the next results is the theory of integral closure of modules and multiplicity of pairs of modules. For the convenience of the reader, we  recall the definition in the Appendix. Based on this theory, in this section, we compute the Euler characteristic of the stabilization of an essentially isolated determinantal singularity (Def.\ \ref{EIDS}). The results of this part are mainly based on  \cite{GRa,GR}.

\begin{definition}

%Let $M \in Mat_{(n,n+k)}(\mathbb{C})$ an  $f$
%the map defined by the $t\times t$ minors of $M$. We say that
%$X$ is a determinantal variety if $X$ is defined by the equation $f=0$ and it has codimension $(n-t+1)(n+k-t+1)$.
%\end{definition}

Let $F: U \subset \mathbb{C}^{q} \longrightarrow \operatorname{Hom}(\mathbb{C}^{n},\mathbb{C}^{n+k})$ be an analytic map, where $U$ is an open neighborhood of $0$ and $F(0)=0$. The variety $X=F^{-1}(\Sigma^t)$, $ 1\leq t \leq n$,  in $\mathbb{C}^q$ is called a determinantal variety in 
$U $ of type $(n+k, n, t)$, if  $\operatorname{codim} (X)=  \operatorname{codim} \Sigma^t$, where $\operatorname{codim}$ denotes the codimension of the variety in the ambient space.
\end{definition}

%the determinantal variety in $\mathbb{C}^q$ is the set
%$X=M^{-1}(\Sigma^{t})$, with $1\leq t\leq n,$ 

In \cite{EG} Ebeling and Gusein--Zade introduced the notion of a determinantal variety with an \textit{{essentially isolated determinantal singularity}} (EIDS) (\cite[Section 1]{EG}).

\begin{definition}\label{EIDS}
A determinantal variety $X \subset U$, where $U$ is an open neighborhood of $0$ in $\mathbb{C}^q$, defined by $X=F^{-1}(\Sigma^t)$, $ 1\leq t \leq n$, where $F: U \subset \mathbb{C}^{q} \longrightarrow \operatorname{Hom}(\mathbb{C}^{n},\mathbb{C}^{n+k})$ is an analytic map, is an essentially isolated determinantal singularity (EIDS), if $F$ is transverse to the rank stratification of $\operatorname{Hom}(\mathbb C^n, \mathbb C^{n+k})$  except possibly at the origin. 
\end{definition}

If $X$ is an EIDS in 
$U $ of type $(n+k, n, t)$, the singular
set of $X$ is given by $F^{-1}(\Sigma^{t-1})$.  The regular part
of $X$ is given by $F^{-1}(\Sigma^{t}\setminus \Sigma^{t-1})$ and denoted by
$X_{\text{reg}}$.  As mentioned by Ebeling and Gusein--Zade in their work, an EIDS  $X$ has an isolated singularity at the origin if, and only if, $q \leq (n-t+2) (n+k-t+2)$. 

A  deformation $\tilde F: U \subset  \mathbb C^q   \to \operatorname{Hom}(\mathbb C^n,\mathbb C^{n+k})$ of $F$ which is transverse to the rank stratification is called a \textit{stabilization} of $F$.
According to Thom's Transversality Theorem, $F$ always admits a stabilization $\widetilde F$.  References for the Thom's Transversality Theorem in the analytic setting are \cite{SZ,Tri}, see also the discussion about it in \cite{F}. The refence for the original work of Thom is \cite{Thom}. 

The variety $\widetilde X= \widetilde F^{-1}(\Sigma^t)$ is an
\textit{essential smoothing} of $X$ (\cite[Section 1]{EG}). Ebeling and Gusein--Zade also remarked that, in the specific case that $q < (n-t+2) (n+k-t+2)$ the essential smoothing is a genuine smoothing.
%Moreover, according to the Thom transversality theorem an EIDS always admits an {\it{essential smoothing}} (\cite{EG}, Section $1$),  and in the specific case that %$n<(s-t+2) (p-t+2)$ the {\it{essential smoothing}} is a genuine smoothing. 
% % % % % % % %

In \cite{EG2}, Ebeling and Gusein--Zade studied the radial index and the Euler obstruction of $1$-form on a singular variety. The authors presented a formula expressing the radial index of a $1$-form in terms of the Euler obstructions of the $1$-form on different strata. %This formula contains certain integer coefficients $n_{i t}$. Up to sign, they are equal to the reduced Euler characteristics of generic hyperplane sections of normal slices of the variety at points of different strata of a Whitney stratification.  %For a determinantal singularity $X = F^{-1}(\Sigma^{t})$ outside the origin, these slices are standard ones: a normal slice to the stratum $X_i \setminus X_{i-1}$ is isomorphic to $\Sigma^{t-i+1} \subset Hom{(n-i+1,n+k-i+1)}$. 

% % % % %

%\subsection{The Euler characteristic of the stabilization of an EIDS}

To state the main result of this section, let us work in the following setup.

Let $F$ be an analytic map, $$F: U \subset \mathbb{C}^{q} \longrightarrow \operatorname{Hom}(\mathbb{C}^{n},\mathbb{C}^{n+k}),$$ with $F(0)=0$, where $U$ is an open neighborhood of $0$ in $\mathbb{C}^q$, %$q \geq n(n+k)$, 
defining the EIDS $_sX=F^{-1}(\Sigma^s)$, $ 1\leq s \leq n$. In this set-up, if $q>n(n+k)$, then the closure of the stratum of lowest positive dimension will be the inverse image of the zero matrix, hence will be an ICIS of positive dimension. If  $q=n(n+k)$, then the inverse  image of the zero matrix is still an ICIS but of dimension $0$. This makes it easy to calculate those invariants which are associated to these strata. Away from the origin, the other strata are the same (up to a smooth factor) as the corresponding strata in $\operatorname{Hom}(\mathbb{C}^{n},\mathbb{C}^{n+k})$. This is why we are able to approximate invariants of the singularities associated with $F$ with those of the strata of $\operatorname{Hom}(\mathbb{C}^{n},\mathbb{C}^{n+k})$.

In \cite{GR} the authors work with the multiplicity of the pair $(JM(_{i}X), N(_{i}X))$, where $JM(_{i}X)$ is the Jacobian module and $N(_{i}X)$ is the module of infinitesimal first order deformation of $_{i}X$, induced from the first order deformation of the presentation matrix of $X$. They showed that the multiplicity of that pair is well defined for EIDS. To do that they use the holomorphic triviality of the stratification of $\operatorname{Hom}(\mathbb{C}^{n},\mathbb{C}^{n+k})$. The next result is Corollary 2.16 of \cite{GR}, where  $$n_{i t}= (-1)^{k(t-i)}\binom{n-i}{n-t}.$$ These coefficients will be very important in the  results to follow.

Consider the graph of $F$ in $\mathbb{C}^{q}\times Hom(\mathbb{C}^{n},\mathbb{C}^{n+k})$;  let $F(\mathbb{C}^{q})\cdot P_{d}(\Sigma^{i})$ denote the intersection multiplicity of the graph of $F$ with $\mathbb{C}^{q}\times P_{d}(\Sigma^{i})$, where $P_{d}(\Sigma^{i})$ is the codimension $d$-polar variety of  $\Sigma^{i}$, and $d+$codim$\Sigma^{i}=q$. For a general choice of $P_{d}(\Sigma^{i})$, the graph of $F$ will intersect $\mathbb{C}^{q}\times P_{d}(\Sigma^{i})$ properly, so the intersection multiplicity will be well defined.

\begin{proposition}[{\cite[Cor.\ 2.16]{GR}}] Let $U \subset \mathbb{C}^{q}$ be an open neighborhood at the origin and  $X \subset U$ be an EIDS of type $(n+k,n,t)$, defined by an analytic map $F: U \subset \mathbb{C}^{q} \to \operatorname{Hom}(n+k,n)$, with $F(0)=0$ and $H$ a generic hyperplane through the origin. $\widetilde{X}$ denotes the stabilization of $X$ and $\widetilde{X \cap H}$ denotes the stabilization of ${X \cap H}$. In this setting we have:
\begin{multline*}
(-1)^{\dim X}\chi(\widetilde{X})+(-1)^{\dim X - 1}\chi(\widetilde{X \cap H})=\\
 \sum_{i=1}^{t}n_{it}(e(JM(_{i}X^{d_{i}}), N(_{i}X^{d_{i}}))+ F(\mathbb{C}^{q})\cdot P_{d(i)}(\Sigma^{i})),
\end{multline*} where $d_i= \text{dim}\, _{i}X.$
\end{proposition}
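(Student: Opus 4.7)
My plan is to prove the formula in two stages: first, I would establish a Lê--Greuel-type formula expressing the Euler characteristic difference as a sum of Gaffney's $m_d$-multiplicities along the strata, and second, decompose each $m_d$-multiplicity into the pair $e(JM, N)$ plus the intersection number $F(\mathbb{C}^{q})\cdot P_{d(i)}(\Sigma^{i})$.

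For the first stage, I would apply the Brasselet--Lê--Seade formula (Theorem~\ref{BLS}) to a generic linear form $\ell$ on $\widetilde{X}$. The quantity $(-1)^{\dim X}\chi(\widetilde{X}) + (-1)^{\dim X-1}\chi(\widetilde{X\cap H})$ is the Morse-theoretic difference naturally detected by such a form. Combined with the Lê--Teissier formula (Theorem~\ref{LT}) applied stratum-by-stratum to the stratification of ${}_{t}X$ induced from the rank stratification of $\operatorname{Hom}(\mathbb{C}^{n},\mathbb{C}^{n+k})$, this difference unfolds into a sum of polar multiplicities along the strata which, by conservation of number under the stabilization $\widetilde{F}$, collapses to Gaffney's $m_d$-multiplicity for the Jacobian module at each stratum. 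The coefficients $n_{it}=(-1)^{k(t-i)}\binom{n-i}{n-t}$ appear by inverting the triangular system relating the Euler obstruction of $\Sigma^{t}$ along its nested strata $\Sigma^{i}$; I would check by induction on $t-i$, using Theorem~\ref{EulerFormula} together with the binomial identities of Lemmas~\ref{lemma1}--\ref{lemma2}, that this inverse is exactly $\{n_{it}\}$, so the Möbius-type inversion is consistent.

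For the second stage, I would use the fact that $F$ is transverse to the rank stratification away from the origin. Consequently the Jacobian module $JM({}_{i}X)$ agrees generically with the pullback of the Jacobian of $\Sigma^{i}$, and the multiplicity $e(JM({}_{i}X^{d_{i}}), N({}_{i}X^{d_{i}}))$ of the pair measures the discrepancy concentrated at the origin, while the intersection number $F(\mathbb{C}^{q})\cdot P_{d(i)}(\Sigma^{i})$ of the graph of $F$ with the codimension-$q$ lifted polar variety captures the generic contribution. Their sum reconstructs the $m_d$-multiplicity by the principle of conservation of number for Buchsbaum--Rim multiplicities together with the theory of integral closure of modules recalled in the Appendix.

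The main obstacle I anticipate is the rigorous verification that the splitting of $m_d$ as $e(JM, N) + F(\mathbb{C}^{q})\cdot P_{d(i)}(\Sigma^{i})$ is exact rather than up to lower-order corrections. This relies on the holomorphic triviality of the stratification of $\operatorname{Hom}(\mathbb{C}^{n},\mathbb{C}^{n+k})$, which ensures that the ``generic'' part of the Jacobian module along each stratum is a clean pullback from the ambient stratification, so that the polar variety contribution can be isolated as a proper intersection number without residual error. The combinatorial bookkeeping of matching signs across different $t$ and $i$ in the $n_{it}$ factors is the place where small errors would be easiest to make and hardest to detect.
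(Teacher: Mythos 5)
This proposition is not proved in the paper at all: it is imported verbatim as Corollary~2.16 of \cite{GR}, where it is obtained by combining Proposition~4 of \cite{EG} (the radial-index formula for $1$-forms on singular varieties, applied to the rank stratification of an EIDS) with the Multiplicity Polar Theorem. So the relevant question is whether your proposed argument would actually reconstruct that result, and I think the first stage has a genuine gap. The Brasselet--L\^e--Seade formula (Theorem~\ref{BLS}) computes the \emph{local Euler obstruction} $\operatorname{Eu}_0(X)$, a local invariant of the germ at the origin; it says nothing directly about $\chi(\widetilde{X})$, the Euler characteristic of the stabilization, which is a vanishing-topology invariant of a \emph{deformation} of $F$. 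The correct mechanism is the Ebeling--Gusein-Zade index theory for $1$-forms: a generic linear form on the stabilization has $m_{d_i}({}_iX)$ Morse points on the stratum $\widetilde F^{-1}(\Sigma^i\setminus\Sigma^{i-1})$, and each such point contributes to $(-1)^{\dim X}\chi(\widetilde X)+(-1)^{\dim X-1}\chi(\widetilde{X\cap H})$ the local index of a generic $1$-form on the normal slice of $\Sigma^i$ in $\Sigma^t$; those local indices are exactly the $n_{it}$, computed from Proposition~\ref{Prop3}. Your claim that the $n_{it}$ arise by inverting the triangular system of Euler obstructions of the nested strata cannot be right: $n_{it}=(-1)^{k(t-i)}\binom{n-i}{n-t}$ depends on $k$, whereas the Euler obstructions $e(s,n)=\binom{n}{s-1}$ of Theorem~\ref{EulerFormula} and the binomial identities of Lemmas~\ref{lemma1}--\ref{lemma2} are all independent of $k$, so no inversion of that system can produce these coefficients. (In the paper's later Remark the matrix $(n_{it})$ is the matrix of the system one \emph{solves for} the $m_{d_i}$, not the inverse of an Euler-obstruction relation.)

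Your second stage is essentially sound in outline: the identity $m_{d}({}_iX)=e(JM({}_iX),N({}_iX))+F(\mathbb{C}^q)\cdot P_{d(i)}(\Sigma^i)$ is precisely Proposition~2.15 of \cite{GR}, quoted later in this paper, and it does follow from the Multiplicity Polar Theorem (Theorem~\ref{MPT}) applied to the pair $(JM({}_iX),N({}_iX))$ in a one-parameter stabilization, using the local holomorphic triviality of the rank stratification as you indicate. But as written your argument proves the decomposition step while leaving the L\^e--Greuel-type Euler-characteristic identity --- the actual content of the first stage --- resting on a tool that does not compute the quantity in question.
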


\begin{remark}\label{equations} 

Let $X$ be a determinantal variety of type $(n+k,n,t)$ defined by $F: \mathbb{C}^{q}  \to \operatorname{Hom}(n,n+k)$, where $ q \geq n(n+k)$. Let us denote by $F_{\ell}$ the map $$F_{\ell}: \mathbb{C}^{q} \cap H_{1}\cap \dots \cap H_{\ell} \to \operatorname{Hom}(n,n+k),$$ the restriction of $F$, where $H_{i}$'s are hyperplanes in $\mathbb{C}^{q}$, $F_{0}=F$.
We denote $F_{\ell}^{-1}(\Sigma^{s})$ by $_{s}X_{\ell}$. %where $d(s)=\dim$ $_{s}X$. 
Using the last result for each $\ell$ from $0$ to $d(t)$, and remembering that $X= _{t}X$ we get the equations.

\begin{equation}
\begin{aligned}
&(-1)^{d(t)} \chi( \widetilde{X}) + (-1)^{d(t)-1}\chi(\widetilde{X_{1}})=\\ &\sum\limits_{i=1}^{t} n_{it}(e(JM( _{i}X), N(_{i}X))+ F(\mathbb{C}^{q})\cdot P_{d(i)}(\Sigma^{i}))
\end{aligned}
\tag{0}
\end{equation}
%{ \noindent $(0):  (-1)^{d(t)} \chi( \widetilde{X}) + (-1)^{d(t)-1}\chi(\widetilde{X_{1}})= \sum\limits_{i=1}^{t} n_{it}(e(JM( _{i}X), N(_{i}X))+ F(\mathbb{C}^{q})\cdot \Gamma _{d(i)}(\Sigma^{i})).$}
%
$$\vdots$$
\begin{equation}
\begin{aligned}
&(-1)^{d(t)- \ell} \chi(\widetilde{X_{\ell}})\nota{wide?} + (-1)^{d(t)-\ell-1}\chi(\widetilde{X_{\ell + 1}})= \\
&\sum_{i=1}^{t} n_{it}(e(JM( _{i}X_{\ell}), N(_iX_{\ell}))+ F_{\ell}(\mathbb{C}^{q-\ell})\cdot P_{d(i)-\ell}(\Sigma^{i}) )
\end{aligned}
\tag{$\ell$}
\end{equation}
%
%{\noindent $(\ell): (-1)^{d(t)- \ell} \chi(\widetilde{X_{\ell}}) + (-1)^{d(t)-\ell-1}\chi(  \widetilde{X_{\ell + 1}})= \sum\limits_{i=1}^{t} n_{it}(e(JM( _{i}X_{\ell}), N(_iX_{\ell}))+ F_{\ell}(\mathbb{C}^{q-\ell})\cdot P_{d(i)-\ell}(\Sigma^{i}) ).$}
%
$$\vdots$$
\begin{equation}
\begin{aligned}
(-1)^{0} \chi( \widetilde{X_{d(t)}})  &= n_{t t}( e(JM( _tX_{d(t)}), N(_tX_{d(t)}))\\ &{\quad} + F_{d(t)}(\mathbb{C}^{q-d(t)})\cdot P_{{0}}(\Sigma^{t}))
\end{aligned}
\tag{$d(t)$}
\end{equation}
%{\noindent $(d(t)): (-1)^{0} \chi( \widetilde{X_{d(t)}})  = n_{t t}( e(JM( _tX_{d(t)}), N(_tX_{d(t)})) + F_{d(t)}(\mathbb{C}^{q-d(t)})\cdot P_{{0}}(\Sigma^{t})) .$}
 
 In the last equation the dimension of the variety $X_{d(t)}$ is zero, then the multiplicity of the pair of
 modules is also zero. Then, we can rewrite the equation as
 $$\chi (\tilde{X}_{d(t)}) =F_{d(t)}(\mathbb C^{q-d(t)}) \cdot \Sigma^t$$
 
 When $l=q-n(n+k)$ then $ \chi (_1\tilde{X}_{l}) $ is the number of points of rank $0$ that appears in
 a stabilization of $F_l.$ If $t=1$, then $X$ is an ICIS, and for $l<q-n(n+k)$, the right hand side of the equation becomes $e(JM( _{1}X_{\ell}))$, because $N(_1X_{\ell})$ is free, and the polar varieties of a point are empty, except for $d=0$.
 
\end{remark}

One of the main ingredients to prove our main result of this section is the next proposition. As remarked in \cite{GR}, in the EIDS context, instead of a smoothing, we have a stabilization -- a determinantal deformation of $X$ to the generic fiber. Then the multiplicity of the polar curve of $JM_{z}(\mathcal{X})$ over the parameter space at the origin in a stabilization is the number of critical points that a generic linear form has on the complement of the singular set on a generic fiber. Call this number $m_{d}(X)$, where $d= \dim X$. The $m_{d}$ multiplicity was defined by Gaffney in \cite{G-Top} for the study of isolated complete intersection singularities (ICIS), and for isolated singularities whose versal deformation have a smooth base in \cite{Gaff1}.

\begin{proposition}[{\cite[Prop.\ 2.15]{GR}}] Let $n,k,t \in \mathbb{Z}$, $n>0$, $k \geq 0$ and $0 \leq t \leq n$. Suppose $\mathcal{X}$ is an one parameter stabilization of an EIDS $X$ of type $(n+k,n,t)$, then$$e(JM(X),N(X))+ F(\mathbb{C}^{q})\cdot P_{d}(\Sigma^{s})= m_{d}(X),$$ where  $d= \dim X$. 
\end{proposition}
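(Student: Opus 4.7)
The plan is to interpret $m_d(X)$ as a count of critical points of a generic linear form on the smooth locus of a stabilization, and then to split that count geometrically into contributions matching the two terms on the left.

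First I would fix the one-parameter stabilization $\widetilde F : \mathcal U \to \operatorname{Hom}(\mathbb{C}^n, \mathbb{C}^{n+k})$ so that $\mathcal X = \widetilde F^{-1}(\Sigma^s)$ is flat over a small disk $T$ with special fiber $X$ and generic fiber $\widetilde X_\tau$. By Thom transversality, for $\tau \neq 0$ the map $\widetilde F_\tau$ is transverse to the full rank stratification of $\Sigma^s$, so $\widetilde X_\tau$ inherits a Whitney stratification whose strata are $\widetilde F_\tau^{-1}(\Sigma^i \setminus \Sigma^{i-1})$. For a generic linear form $z$ on $\mathbb{C}^q$, $m_d(X)$ is by definition the multiplicity of the polar curve of $JM_z(\mathcal X)$ over $T$ at the origin, equivalently the number of critical points of $z|_{\widetilde X_{\tau,\mathrm{reg}}}$ for small generic $\tau$.

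Next I would classify each such critical point $p$ by the rank stratum containing $\widetilde F_\tau(p)$. If $\widetilde F_\tau(p) \in \Sigma^s \setminus \Sigma^{s-1}$, then transversality makes $\widetilde F_\tau$ a submersion onto a neighborhood in $\Sigma^s_{\mathrm{reg}}$, so $p$ is critical for $z$ on $\widetilde X_\tau$ if and only if $\widetilde F_\tau(p)$ is a critical point of $z$ on $\Sigma^s_{\mathrm{reg}}$ in the appropriate codimension, i.e.\ lies in $P_d(\Sigma^s)$. By conservation of intersection multiplicity across the flat family, the count of such top-stratum critical points equals the intersection number of the graph of $F$ with $\mathbb{C}^q \times P_d(\Sigma^s)$, which is precisely the term $F(\mathbb{C}^q)\cdot P_d(\Sigma^s)$.

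The remaining critical points have image in $\Sigma^{s-1}$, so they are exactly those that coalesce to the origin of $X$ as $\tau \to 0$. To count them I would apply Gaffney's theory of the multiplicity of a pair of modules. Since $N(X)$ is generated over $\mathcal O_X$ by $JM(X)$ together with a basis of the first-order matrix deformations of the presentation matrix, we have an inclusion $JM(X) \subseteq N(X)$, and away from the origin the holomorphic triviality of the rank stratification of $\operatorname{Hom}(\mathbb{C}^n, \mathbb{C}^{n+k})$ combined with the EIDS hypothesis forces $JM$ and $N$ to share the same integral closure in the relative setting on $\mathcal X$. Consequently the polar curve contribution coming from the deeper strata is localized at the origin, and the principle of specialization of integral dependence identifies its length with $e(JM(X), N(X))$. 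Adding the two contributions yields the stated identity.

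The main technical obstacle is the last step: verifying rigorously that the relative modules $JM(\mathcal X/T)$ and $N(\mathcal X/T)$ have the same integral closure off the origin of the special fiber, so that the polar curve attached to the pair is genuinely concentrated at $0$ and its multiplicity equals the number of critical points that collapse there. This is where the EIDS assumption, the Whitney regularity of the rank stratification, and Gaffney's principle of specialization of integral dependence (in the form developed in \cite{G-Top} and \cite{Gaff1}) all come into play; once these ingredients are in place the decomposition into the two terms is forced by the above stratum-by-stratum counting.
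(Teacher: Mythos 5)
The paper does not prove this proposition itself; it quotes it from \cite{GR}, and the tool it supplies for exactly this purpose is the Multiplicity Polar Theorem (Theorem \ref{MPT}) stated in the appendix. Your overall strategy --- read $m_d(X)$ as the number of critical points of a generic linear form on the regular part of a generic fiber of the stabilization, and split that count into a term coming from the polar geometry of $\Sigma^s$ plus a defect measured by $e(JM(X),N(X))$ --- is the right one, and it is precisely what the MPT delivers when applied to the pair $JM(\mathcal X)\subset N(\mathcal X)$ over the parameter disk: $\Delta e(JM,N)=\operatorname{mult}_y P_d(JM(\mathcal X))-\operatorname{mult}_y P_d(N(\mathcal X))$, where $e(JM,N)$ vanishes on the generic fiber because transversality holds everywhere there, $\operatorname{mult}_y P_d(JM(\mathcal X))=m_d(X)$ by definition of $m_d$, and $\operatorname{mult}_y P_d(N(\mathcal X))=F(\mathbb{C}^q)\cdot P_d(\Sigma^s)$ because $N(X)$ is (up to a free summand) $F^*(JM(\Sigma^s))$, so its polar varieties are the intersections of the graph of $F$ with $\mathbb{C}^q\times P_{\bullet}(\Sigma^s)$.

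The gap is in your middle step. You classify the critical points of $z|_{\widetilde X_{\tau,\mathrm{reg}}}$ ``by the rank stratum containing $\widetilde F_\tau(p)$'' and assign ``the remaining critical points'' to $\Sigma^{s-1}$; but $\widetilde X_{\tau,\mathrm{reg}}=\widetilde F_\tau^{-1}(\Sigma^s\setminus\Sigma^{s-1})$, so every critical point you are counting maps into the open stratum and your second class is empty. The true dichotomy is not by image stratum: all $m_d(X)$ critical points sit over $\Sigma^s\setminus\Sigma^{s-1}$, and the splitting is between those accounted for by the polar variety of the pulled-back module $N$ (the ``expected'' ones, counted by $F(\mathbb{C}^q)\cdot P_d(\Sigma^s)$) and the excess, which is exactly what $e(JM(X),N(X))$ measures; making that precise is the content of the MPT, not of a pointwise classification of critical points. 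Relatedly, your closing appeal to the ``principle of specialization of integral dependence'' invokes the wrong tool --- that principle runs in the opposite direction (from constancy of multiplicities to integral dependence); what is needed is the MPT together with the fact (from \cite{GRa,GR}) that the EIDS condition forces $\overline{JM(X)}=\overline{N(X)}$ off the origin, so that the pair multiplicity is defined and the exceptional divisor is compact.
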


Now we present a polar multiplicity type formula, based on the connection between multiplicity of polar varieties and the $m_{d}$ invariant. 
%{Given $M$ we have equations for the graph $a_{ij}-m_{ij}$ if we eliminate $m_{ij}$ as much as possible we get generators of the ideal of $M$, $I(M)$ we would like to use this idea to define $e(M_{0}, N_{0})$ work on $\Sigma^{s}$, $N_{0}=m$, $M_{0} = I(M | H_{d})$. Notice that $e(M_{0},N_{0})+ m(\Sigma^{s}(M))$}.

\begin{lemma}\label{lemma} Suppose $X^{d}= F^{-1}(\Sigma^{r})$ is an EIDS, $\pi_{2}\: (X,0) \to (\mathbb{C}^{2},0)$ defines the polar curve $P^{1}(X)$. If $H$ is a generic hyperplane then 
$$m_{d-1}(X\cap H)= m_{d-1}(F\mid_{H})^{-1}(\Sigma^{r} ) = m_0(P^{1}(X)).$$
\end{lemma}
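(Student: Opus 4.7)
The plan is to establish the two equalities in turn. For the first equality $m_{d-1}(X\cap H) = m_{d-1}((F|_H)^{-1}(\Sigma^{r}))$, I would observe that for a generic hyperplane $H$ through the origin, the restriction $F|_H$ remains transverse to the rank stratification of $\operatorname{Hom}(\mathbb{C}^n,\mathbb{C}^{n+k})$ away from the origin, so $X\cap H = (F|_H)^{-1}(\Sigma^{r})$ is again an EIDS, of dimension $d-1$. Thus $m_{d-1}$ is defined for it and the two notations describe the same object; this equality is essentially definitional.

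For the second equality, write $\pi_2 = (\ell_1,\ell_2)$ for generic linear forms and set $H = \ell_1^{-1}(0)$. Let $\widetilde{X}$ be a stabilization of $X$ coming from a stabilization $\widetilde F$ of $F$. By the genericity of $\ell_1$, the fibre $\widetilde{X}\cap \ell_1^{-1}(t)$ for small generic $t$ provides a stabilization of the EIDS $X\cap H$. By the definition of $m_{d-1}$ (the number of critical points of a generic linear form on the smooth part of a generic fibre of the stabilization), $m_{d-1}(X\cap H)$ equals the number of critical points of a generic linear form, which I may take to be $\ell_2$, on $\widetilde{X}_{\mathrm{reg}}\cap \ell_1^{-1}(t)$.

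Next I would identify those critical points with $P^{1}(X)\cap \ell_1^{-1}(t)$. A point $x\in \widetilde{X}_{\mathrm{reg}}$ is critical for $\ell_2|_{\widetilde{X}_{\mathrm{reg}}\cap \ell_1^{-1}(t)}$ precisely when $d\ell_1$ and $d\ell_2$ become linearly dependent on $T_x \widetilde{X}_{\mathrm{reg}}$, which is exactly the condition cutting out the polar curve $P^{1}$ associated with $\pi_2=(\ell_1,\ell_2)$. Hence counting these critical points on the smooth generic fibre recovers $|P^{1}(X)\cap \ell_1^{-1}(t)|$ for small generic $t$. Since $P^{1}(X)$ is a one-dimensional germ and $\ell_1$ is generic, this cardinality is by definition the multiplicity $m_0(P^{1}(X))$ of the curve at the origin.

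The step I expect to be the main obstacle is the bookkeeping that matches the critical points on the stabilized fibre with points of the polar curve of the original $X$: one must argue, using conservation of multiplicity along the stabilization family and the properness of the polar construction, that no critical points are created or destroyed when passing from $F$ to $\widetilde F$. This is precisely where the transversality of $\widetilde F$ to the rank stratification, together with the generic choice of $\ell_1$ and $\ell_2$, is used to keep the count stable and to ensure that all relevant critical points remain in $\widetilde{X}_{\mathrm{reg}}$.
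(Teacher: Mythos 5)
Your proposal is correct and follows essentially the same route as the paper: identify $X\cap H$ with $(F|_H)^{-1}(\Sigma^r)$, realize a stabilization of $X\cap H$ by the family of parallel hyperplane slices, identify the critical points of $\ell_2$ on the fibres of $\ell_1$ with the polar curve, and conclude via $\deg(\ell_1|_{P^1(X)})=m_0(P^1(X))$ (the paper justifies this last degree count by Lemma~4.1.8 of L\^e--Teissier, choosing the kernel of $\pi_2$ to meet the tangent cone of the polar curve only at the origin). The only difference is that the paper takes the stabilization of $X\cap H$ to be $(F|_{\ell_1^{-1}(t)})^{-1}(\Sigma^r)$, i.e.\ it deforms within $X$ itself rather than passing to a stabilization $\widetilde F$ of $F$, so the polar curve of the deformation is literally $P^1(X)$ and the conservation-of-critical-points step you flag as the main obstacle never arises.
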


\begin{proof}

It is obvious that $X \cap H =(F\mid_{H})^{-1}(\Sigma^{r})$, for $H$ generic.
Because $X$ is an EIDS, if $H= l^{-1}(0)$ then the deformation $(F\mid_{l^{-1}(t)})^{-1}(\Sigma^{r})$ can be  made by a good choice of $H$ into a stabilization of $X \cap H$. 

By Lemma 4.1.8 of \cite{LT1}, we can pick the kernel of $\pi_{2}$ so that the intersection of the tangent cone of the polar with the kernel of the projection is $\{0\}.$ We can assume $\pi_{2}(z)=(l_{1}(z),l_{2}(z))$ such that $\deg(l_{1}\mid P^{1}(X))= m_0(P^{1}(X))$, where $l_{1}^{-1}(0)=H$, then the critical points of $l_{2}$ over the fibers of $l_{1}$ is $P^{1}(X)$ and the polar curve of the deformation of $X\cap H$ to its stabilization. This  implies that the multiplicity of the polar curve $m_0(P^{1}(X))=m_{d-1}(X\cap H)$.
\end{proof}

The last result depends heavily on the landscape of the computations remaining the same throughout the proof, as the following example shows.

\begin{exmp}

Let  $X=F(\mathbb{C}^2)$, where
$F(x,y)=(x, y^2, xy)$. We see that  $X $is the Whitney umbrella in $\mathbb{C}^3$. We consider the landscape of sections of $X$ by maps $f\: \mathbb{C}^2\to \mathbb{C}^3$, and their deformations. 

If $H$, a generic hyperplane in $\mathbb{C}^3$, is the image of $f$, then $X\cap H$ is a cusp. The polar curve of $X$ is smooth, and therefore $m_1(f^{-1}(X))$ is $1$ in this landscape by the above proof.

If we change the landscape, we get a different result. Using formulas from Section 8 of  \cite{G-Top} we compute easily that $m_1(X)=m_0(X)+m_2(X)-1=2+0-1=1$, as expected,  and $m_1(f^{-1}(X))=\mu(X\cap H)+m_0(X\cap H)-1=2+2-1=3$, in the landscape of ICIS. 

\end{exmp}

In the above example, in the landscape of sections $f^{-1}(X)$ has no smoothing; the stabilization has a node. In the ICIS landscape it does. The node accounts for the difference in the two values of $m_1(f^{-1}(X))$.

Now we can state the main result about the Euler characteristic of a stabilization.

\begin{theorem}\label{stabilization} Let $n,k,t,q \in \mathbb{Z}$, $n>0$, $k \geq 0$ and $0 \leq t \leq n$. Let us assume $ q \geq n(n+k)$ and $X \subset \mathbb{C}^{q}$ be an EIDS of type $(n+k,n,t)$ given by an analytic map $F\: \mathbb{C}^{q}\to \operatorname{Hom}(n,n+k)$, with $F(0)=0$. Then,
\begin{multline*}
\chi(\widetilde{X})\nota{wide?} = (-1)^{d(t)-d(1)}n_{1 t} \chi(\widetilde{_{1}X})+\\ \sum_{i=2}^{t} n_{i t}((-1)^{d(t)}m_{d(i)}(_{i}X)+ (-1)^{d(t)-d(i)}\operatorname{Eu}_{0}(_{i}X)),$$
\end{multline*}where $$n_{i t}= (-1)^{k(t-i)}\binom{n-i}{n-t}$$ and $d(i)= \dim {_{i}X}$ for $i= 0, \dots, t$.
\end{theorem}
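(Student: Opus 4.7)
The plan is to treat the system in Remark~\ref{equations} as a triangular linear system for $\chi(\widetilde{X_\ell})$ and solve it by telescoping, convert each right-hand side into a single $m_d$-multiplicity via Proposition~2.3, and then use Lemma~\ref{lemma} iteratively together with Theorem~\ref{LT} to identify the resulting alternating sums with local Euler obstructions.

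First, I would apply Proposition~2.3 to each $_iX_\ell$ (which is itself an EIDS provided the hyperplanes are chosen transverse to the rank stratification), replacing the sum $e(JM(_iX_\ell),N(_iX_\ell))+F_\ell(\mathbb{C}^{q-\ell})\cdot P_{d(i)-\ell}(\Sigma^i)$ by the single invariant $m_{d(i)-\ell}(_iX_\ell)$. Since the two signs in equation $(\ell)$ differ by $-1$, it can be rewritten as $\chi(\widetilde{X_\ell})-\chi(\widetilde{X_{\ell+1}}) = (-1)^{d(t)-\ell}\sum_{i=1}^{t}n_{it}\,m_{d(i)-\ell}(_iX_\ell)$, and summing over $\ell=0,\dots,d(t)-1$ telescopes the left side to $\chi(\widetilde{X})-\chi(\widetilde{X_{d(t)}})$. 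The terminal equation $\chi(\widetilde{X_{d(t)}}) = F_{d(t)}(\mathbb{C}^{q-d(t)})\cdot\Sigma^t$ is exactly the $\ell=d(t)$, $i=t$ contribution (using $n_{tt}=1$ and the vanishing of $N$ in dimension zero), so after absorbing it, dropping the terms with $\ell>d(i)$ (which vanish since $_iX_\ell$ is then empty), and making the change of index $j=d(i)-\ell$, I obtain
\[
\chi(\widetilde{X}) = \sum_{i=1}^{t} n_{it}(-1)^{d(t)-d(i)}\,S_i, \qquad S_i := \sum_{j=0}^{d(i)}(-1)^{j}\,m_{j}(_iX_{d(i)-j}).
\]

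For $i\ge 2$, iterating Lemma~\ref{lemma} identifies $m_j(_iX_{d(i)-j}) = m_0(P^{d(i)-j}(_iX))$ for $0\le j\le d(i)-1$ (iteration is justified by choosing the sections generically with respect to every polar variety that appears), while the endpoint $j=d(i)$ is the Gaffney invariant $m_{d(i)}(_iX)$ itself. Theorem~\ref{LT}, rewritten through the substitution $r=d(i)-j$, yields $\sum_{j=0}^{d(i)-1}(-1)^{j}m_0(P^{d(i)-j}(_iX)) = \operatorname{Eu}_0(_iX)$, hence $S_i = \operatorname{Eu}_0(_iX)+(-1)^{d(i)}m_{d(i)}(_iX)$. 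Multiplying by $n_{it}(-1)^{d(t)-d(i)}$ produces exactly the summand $n_{it}[(-1)^{d(t)}m_{d(i)}(_iX)+(-1)^{d(t)-d(i)}\operatorname{Eu}_0(_iX)]$ of the theorem. For $i=1$, the stratum $_1X=F^{-1}(0)$ is an ICIS, and I would invoke the classical identity $\chi(\widetilde Y) = \operatorname{Eu}_0(Y)+(-1)^{\dim Y}m_{\dim Y}(Y)$ for a smoothable ICIS $Y$ (which follows from Milnor's formula $\chi(\widetilde Y)=1+(-1)^{\dim Y}\mu(Y)$, the L\^e--Greuel identity $m_{\dim Y}(Y)=\mu(Y)+\mu(Y\cap H)$, and Theorem~\ref{LT}) to collapse $S_1$ to $\chi(\widetilde{_1X})$, yielding the leading $(-1)^{d(t)-d(1)}n_{1t}\chi(\widetilde{_1X})$ term.

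The main obstacle is the combinatorial sign bookkeeping: the telescoping factor $(-1)^{d(t)-\ell}$ from Remark~\ref{equations}, the exponent shift in going from the Gaffney $m$-invariant indexed by dimension to the polar multiplicity $m_0(P^{\bullet})$, and the sign in Theorem~\ref{LT} must all be reconciled cleanly. A second subtle point is justifying the iterated application of Lemma~\ref{lemma}: the same hyperplanes $H_1,\dots,H_\ell$ must be generic simultaneously with respect to every stratum $_iX$ and every polar variety $P^k(_iX)$, which requires an openness argument but no essentially new ideas.
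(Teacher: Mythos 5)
Your proposal is correct and follows essentially the same route as the paper: telescoping the system of equations from Remark \ref{equations}, converting each right-hand side to an $m_d$-invariant via Proposition 2.15 of \cite{GR}, iterating Lemma \ref{lemma} to trade the sliced invariants for polar multiplicities of $_iX$, and then invoking Theorem \ref{LT} for $i\ge 2$ and the ICIS identity (alternating sum of polar multiplicities equals $\chi$ of the smoothing) for $i=1$. Your version merely makes the sign bookkeeping and the derivation of the $i=1$ case more explicit than the paper does.
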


\begin{proof} Taking the alternating sum of the equations of Remark \ref{equations} and multiplying both sides by $(-1)^{d(t)}$,  it is easy to see that the left hand side is $\chi(\widetilde{X})$.

On the right hand side  the alternating sum is$$\sum_{\ell =0}^{d(t)} (-1)^{\ell} \sum_{i=1}^{t} n_{it}(e(JM( _{i}X_{\ell}), N(_iX_{\ell}))+ F_{\ell}(\mathbb{C}^{q-\ell})\cdot P_{d(i)-\ell}(\Sigma^{i}) ).$$ 

Here the alternating sum requires a more delicate analysis. So we will split the sum in two alternating sums. 

The first one is when $i=1$, in this case we have that all $_{1}X_{\ell}$ are ICIS, and in this case we know that, $$m_{0}(_{1}X)-m_{1}(_{1}X)+\dots + (-1)^{d(1)} m_{d(1)}(_{1}X) = \chi(\widetilde{_{1}X})\nota{wide?}$$
(see for instance Remark 4.6 of \cite{Victor-Marcelo}).

Now, as the hyperplanes $H_{1}, \dots, H_{\ell}$ are generic, applying successively the Lemma \ref{lemma}  we have the following relation,
$$m_{d(j)-\ell}(_{j}X_{\ell})= m_{d(j)-\ell}(_{j}X),$$for $1 < \ell \leq d(j)$. %\cite[Cor.\ 2.3.2.1]{LT2} (see also \cite[Prop.\ 5.4.2]{Te}).

Using this last relation and Theorem \ref{LT},  the second part of the alternating sum, which contains those terms with $i>1$, can be rewritten as 
$$\sum_{i=2}^{t} n_{i t}((-1)^{d(t)}m_{d(i)}(_{i}X)+ (-1)^{d(t)-d(i)}\operatorname{Eu}_{0}(_{i}X)).$$

So we have:
\begin{multline*}
\chi(\widetilde{X})= (-1)^{d(t)-d(1)}n_{1 t} \chi(\widetilde{_{1}X})+\\ \sum_{i=2}^{t} n_{i t}((-1)^{d(t)}m_{d(i)}(_{i}X)+ (-1)^{d(t)-d(i)}\operatorname{Eu}_{0}(_{i}X)).
\end{multline*}
\end{proof}

\begin{remark}
	
%To simplify we use the notation $M_{n+k,n}^i=M^i, \, 1\leq i\leq s\leq n,$ and %$d_i=\text{dim} (M^i),$ and $_iX=F^{-1}(M^i),$ and $d_i^{'}==\text{dim} (_iX).$

In \cite{GR}, applying $Prop.\ 4$ of \cite{EG} to EIDS case, Gaffney and Ruas obtained the equations below. Recall that the integers $n_{is}$ are given by the formulas 
$n_{is}=
 (-1)^{(k)(s-i)} \binom{n-i}{n-s}$. 
 %Further, if $\omega=dl$, $l$ a generic linear form, then $\indrad (dl,X,0) =(-1)^{d-1}\overline {\chi}( \widetilde{X\cap H}),$ $H=l^{-1}(\epsilon)$ for $\epsilon$ %sufficiently small  (\cite{E-GZ1}, Theorem 3). This invariant does not depend on the generic hyperplane $H$ or on the
 %stabilization of the determinantal section $X\cap H,$ so we simply write $\indrad (dl,X,0) =(-1)^{d-1}\overline {\chi}( {X\cap H}).$

%Assuming $X= {}_tX$, we can re-write this formula using $dl$ for $\omega$ and $m_{d_i}(_iX)$ for $ \indPHN (dl,  \hskip 1pt _iX,0) $, apply it to each stratum in %turn getting:

\begin{align*}
 (-1)^{d_s}{\chi}(_sX)+(-1)^{d_s-1}{\chi}( {_sX \cap H}) &= \sum_{i=1}^s n_{is} m_{{d_i}}(_iX) \\
 (-1)^{d_{s-1}} {\chi}( _{s-1}X)+(-1)^{d'_{s-1}-1}{\chi}({ _{s-1}X \cap H}) &= \sum_{i=1}^{s-1} n_{i(s-1)}  m_{d_i}(_iX) \\
& \vdots \\
 (-1)^{d_2 }{\chi}( _2X)+(-1)^{d_2-1}{\chi}( {_2X  \cap H})  &=\sum_{i=1}^{2} n_{i(2)} m_{d_i}(_iX) \\
(-1)^{d_1}\chi(_1X)+ (-1)^{d_1-1}\chi(_1X\cap H)&= m_{d_1}(_1X)
\end{align*}

We consider the above equations forming a system of $s$ equations and $s$ variables $m_{{d_i} }( _iX)$, $i=1, \dots, s$.

The $s \times s$  matrix of the system is the triangular matrix:
\[ A =
\begin{bmatrix}
n_{1s} & n_{2s} & \cdots & \cdots &n_{ss} \\
n_{1s-1} & n_{2s-1} & \cdots & n_{s-1 s-1}& 0\\
& & & & \\
& & & & \\
n_{12} & n_{22}&\cdots & 0 & 0\\
n_{11} & \cdots & 0 & 0 & 0\\
\end{bmatrix}.
\]
%\left[
%\begin{array}{lcrll}
%n_{1s} & n_{2s} & \ldots & \ldots &n_{ss} \\
%n_{1s-1} & n_{2s-1} & \ldots & n_{s-1 s-1}& 0\\
%& & & & \\
%& & & & \\
%
%
%n_{12} & n_{22}&\ldots & 0 & 0\\
%n_{11} & \ldots & 0 & 0 & 0\\
%\end{array}
%\right],

Since $n_{i i}= 1$, it follows that  $\det A=(-1)^s$.

We can rewrite the above system as follows:
$$ A\cdot  X= B.$$ %where $B$ is the $s\times 1$ matrix given by

Write  $x_i$, $i=1,\dots, s$ and $b_i$, $i=1,\dots s$ the entries of the matrices $X$ and $B$ respectively.  Then, $b_i= (-1)^{d_i }{\chi}(_iX)+(-1)^{d_i-1}{\chi}( {_iX \cap H})$, $i=2, \dots, s$ and $b_1=(-1)^{d_1}\chi(_1X)+ (-1)^{d_1-1}\chi(_1X\cap H)$.  And $x_i=m_{d_i}(_iX)$.

The solution of the above system are given by:
\[X:=
\left\{
\begin{aligned}
x_1 &= b_1\\
x_2 &= b_2- n_{12} x_1\\
%(-1)^{d_s-1+s} &\begin{pmatrix}n-1\\ i-1\end{pmatrix} \\
 &{\ }\vdots\\
x_s &= b_s- \Sigma_{i=1}^{s-1} n_{is}x_i
\end{aligned} 
\right.
\]
\end{remark}

\begin{proposition}\label{cidinha} Let $\Sigma^{i} \subset H(n,n+k)$ be the generic determinantal variety and $i=1,\dots, n$, then,
$$m_{0}(\Sigma^1)=1, \quad  m_{d'_i}(\Sigma^i)=0\  (2\leq i\leq n),$$ where $d'_{i}= \dim \Sigma^{i}$.
\end{proposition}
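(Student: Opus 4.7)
The first equality, $m_0(\Sigma^1)=1$, is immediate: $\Sigma^1 = \{0\}$ is a single reduced point, its own (trivial) stabilization, and the number of critical points of a linear form on a single point equals $1$. The content of the proposition is the second claim, $m_{d'_i}(\Sigma^i)=0$ for $2\leq i\leq n$, which I would reduce to a statement about critical points of a generic linear form on $\Sigma^i_{\text{reg}}$.

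My plan is to exploit the fact that $\Sigma^i$ is a cone in $\operatorname{Hom}(\mathbb C^n,\mathbb C^{n+k})$, i.e.\ invariant under the multiplicative $\mathbb C^\ast$-action, because the rank of $\lambda A$ equals that of $A$ for $\lambda\ne 0$. I read $m_{d'_i}(\Sigma^i)$ as the multiplicity of the top polar variety $P_{d'_i}(\Sigma^i)$, equivalently the number of critical points of a generic linear form $\ell$ on the smooth part of a generic fiber of a stabilization; since the identity map is already transverse to the rank stratification, $\Sigma^i$ is its own stabilization, and it suffices to bound the critical locus of $\ell|_{\Sigma^i_{\text{reg}}}$.

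The key step uses the Euler vector field $E(x)=x$. By conicity, $E(x)\in T_x\Sigma^i$ at every smooth point, so $d\ell_x(E(x)) = \ell(x)$, and any critical point $x$ of $\ell|_{\Sigma^i_{\text{reg}}}$ must satisfy $\ell(x)=0$. Further, $T_{\lambda x}\Sigma^i = T_x\Sigma^i$ as linear subspaces of the ambient space, so the critical locus is $\mathbb C^\ast$-invariant and is either empty or of positive dimension. On the other hand, Definition \ref{polar} forces $P_{d'_i}(\Sigma^i)$ to have codimension exactly $d'_i$ in $\Sigma^i$ (hence dimension $0$) or to be empty, for generic $\ell$. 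Reconciling these two statements forces $P_{d'_i}(\Sigma^i)=\emptyset$, so $m_{d'_i}(\Sigma^i)=0$.

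The main obstacle will be cleanly justifying that Gaffney's $m_{d'_i}$ coincides with the top polar variety multiplicity in this situation. The cleanest route is to apply Proposition 2.15 of \cite{GR} to $\Sigma^i$ itself: the Jacobian-module term $e(JM(\Sigma^i), N(\Sigma^i))$ vanishes because the stabilization is trivial, reducing $m_{d'_i}(\Sigma^i)$ to the intersection number $\operatorname{id}(\operatorname{Hom})\cdot P_{d'_i}(\Sigma^i)$, which is zero by the argument above. Once this identification is in place, the Euler-field observation closes the proof with no further computation.
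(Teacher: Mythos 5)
Your argument is correct, but it takes a genuinely different route from the paper. You argue intrinsically: since the identity map is transverse to the rank stratification, $\Sigma^i$ is its own stabilization, so $m_{d'_i}(\Sigma^i)$ is the number of critical points of a generic linear form $\ell$ on $\Sigma^i_{\mathrm{reg}}$; conicity (the Euler field $x\in T_x\Sigma^i$ at every smooth point, and constancy of tangent spaces along rulings) makes this critical locus $\mathbb{C}^*$-invariant and contained in $\ell^{-1}(0)$, hence empty or positive-dimensional, while genericity forces the top polar variety to be finite --- so it is empty. The paper instead treats the $m_{d'_j}(\Sigma^j)$ as the unknowns of the triangular system $AX=B$ set up in the preceding remark, feeds in $\chi(\Sigma^i)=1$ (again by conicity) together with the Ebeling--Gusein-Zade value $\chi(\Sigma^i\cap H)=1+(-1)^i\binom{n-1}{i-1}$, and gets the vanishing from a parity check and induction. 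Your route is more elementary and self-contained (it needs neither Proposition \ref{Prop3} nor the linear system), and it explains \emph{why} the top polar multiplicity of a cone vanishes; the paper's route has the advantage of reusing machinery already in place for Theorem \ref{stabilization}. One small caution: the detour through Proposition 2.15 of \cite{GR} and the claim $e(JM(\Sigma^i),N(\Sigma^i))=0$ is dispensable and is the least justified step as written ("because the stabilization is trivial" is not by itself a proof that the pair multiplicity vanishes); it is cleaner to invoke directly the paper's working definition of $m_d$ as the number of critical points of a generic linear form on the regular part of the generic fiber of the stabilization, which for the trivial stabilization is exactly the count your Euler-field argument shows to be zero.
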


\begin{proof}
$\Sigma^1=\{0\}$ is a $0$-dimensional complete intersection of multiplicity $1$,
so $x_1=m_0(\{0\})=1$. 

We first solve the above system for the generic determinantal varieties 
$\Sigma^i$, $ 1\leq i\leq n$, we denote $d'_{i}= \dim \Sigma^{i}$.

First, notice that the following hold:
\begin{itemize}
	
	\item [ (1)]  For all $i$,   $\chi(\Sigma^i)=1$. This holds because $\Sigma^i$ is its own
	essential smoothing. So we are computing the Euler characteristic of a cone with vertex at the origin, hence its topology is the topology of a point.

	\item [(2)] $ \chi(\Sigma^i \cap H) = 1+ \bar{\chi}{(\Sigma^i\cap H )}= 1+(-1)^i \begin{pmatrix}n-1\\ i-1\end{pmatrix}$  (\cite[Prop.\ 3]{EG}).
\end{itemize}

Then, the matrix $B $ is as follows:
\[ B =
\begin{bmatrix}
(-1)^{d'_s-1+s} & \binom{n-1}{s-1}\\[1ex]
(-1)^{d'_{s-1}-1+s-1} & \binom{n-1}{s-2} \\[1ex]
%(-1)^{d_s-1+s} &\begin{pmatrix}n-1\\ i-1\end{pmatrix} \\
%
\vdots & \vdots \\
(-1)^{d'_2-1+2} & \binom{n-1}{1}\\[1ex]
\nota{????} & 1
\end{bmatrix},
\]
and $X$ is the matrix of the variables:
\[ X =
\begin{bmatrix}
m_{d'_1}(\Sigma^1)\\[1ex]
m_{d'_2}(\Sigma^2) \\[1ex]
\vdots\\
m_{d'_s}(\Sigma^s)
\end{bmatrix}.
\]

Notice that $\Sigma^1=\{0\}$ and $d'_1=0$, hence $ x_1=1$.

%The polar multiplicities  will be given by $X= A^{-1}B.$

Now, $$x_2= (-1)^{d'_2-1+2}  \begin{pmatrix}n-1\\ 1\end{pmatrix}- (-1)^{k} \begin{pmatrix}n-1\\ n-2\end{pmatrix},$$ and we can verify that $d'_2$ and $k+1$ have the same parity. Hence $m_{d'_2}(\Sigma^2)=0$. The result now follows by induction.
\end{proof}

In the next set of results, we  work with a determinantal variety $X$, satisfying some specific conditions that we will call nice conditions. Since these conditions ensure that $X$ is similar to the generic determinantal varieties, and since an EIDS arises as a pullback of a generic determintal variety, we will call an $X$ satisfying these conditions a {\it good approximation}.

\begin{definition} Let $X \subset \mathbb{C}^{q}$ be an EIDS of type $(n+k,n,t)$. We say that $X$ satisfies the nice condition if $$e(JM(_{i}X_{\ell}), N(_{i}X_{\ell}))=0,$$for $\ell < d(i)$, $1< i \leq t$, where $ _tX,\hskip 1pt _tX_{\ell}$ are EIDS, and  $ _1X, \hskip 1pt _1X_{\ell}$ are ICIS. %with

%$$e(JM(_1 X_{\ell}), N(_1 X_{\ell}))= e(JM(_1 X_{\ell}))= \mu(_1 X_{\ell})+ \mu(_1 X_{\ell + 1}).$$

We also assume $F_{\ell}(\mathbb{C}^{q-\ell}) \cdot P_{q-l}(\Sigma^{i})= m_0(P_{q-\ell}(\Sigma^{i}))$ here $q-\ell$ is codimension in $\operatorname{Hom}(n,n+k)$, and if $P_{q-\ell}(\Sigma^{i})= \emptyset$ the multiplicity is $0$.
\end{definition}

For this special situation, using the Theorem \ref{EulerFormula}, we get a very nice formula in terms of Newton's binomials.

\begin{proposition}\label{stabilizationEIDS} Let $X$ be an EIDS of type $(n+k,n,t)$ which is a good approximation, given by $F: \mathbb{C}^{q} \to \operatorname{Hom}(n, n+k)$. Then,$$\chi(\widetilde{X})\nota{wide?}= (-1)^{d(t)-d(1)}n_{1t}\chi ( _1 \widetilde{X}) + \sum_{i=2}^{t} (-1)^{d(t)-d(i)}n_{i t}\binom{n}{i-1},$$where$$n_{i t}= (-1)^{k(t-i)}\binom{n-i}{n-t}$$where $d(i)= \dim {_iX}$ for $i= 0, \dots, t$.
\end{proposition}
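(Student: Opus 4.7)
The plan is to derive the formula directly from Theorem \ref{stabilization}. Comparing the two statements, it suffices to show that for every $2 \leq i \leq t$ the bracketed expression $(-1)^{d(t)}m_{d(i)}({_{i}}X) + (-1)^{d(t)-d(i)}\operatorname{Eu}_{0}({_{i}}X)$ collapses to $(-1)^{d(t)-d(i)} \binom{n}{i-1}$. This reduces to establishing the two identities
$$m_{d(i)}({_{i}}X) = 0 \qquad \text{and} \qquad \operatorname{Eu}_{0}({_{i}}X) = \binom{n}{i-1},$$
the second of which matches the Euler obstruction of the ambient generic determinantal variety as given by Theorem \ref{EulerFormula}.

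The vanishing $m_{d(i)}({_{i}}X) = 0$ is obtained by applying Proposition 2.15 of \cite{GR} to ${_{i}}X$, yielding
$$m_{d(i)}({_{i}}X) = e(JM({_{i}}X), N({_{i}}X)) + F(\mathbb{C}^{q}) \cdot P_{d(i)}(\Sigma^{i}).$$
The first summand is killed by the nice condition at $\ell = 0$, while the second is identified with $m_{0}(P_{d(i)}(\Sigma^{i}))$ by the second clause of that condition. From the assumption $q \geq n(n+k)$ one computes $d(i) - \dim \Sigma^{i} = q - n(n+k) \geq 0$, so the codimension-$d(i)$ polar of $\Sigma^{i}$ is empty whenever this inequality is strict; in the borderline case $q = n(n+k)$ one invokes Proposition \ref{cidinha}, which gives $m_{\dim \Sigma^{i}}(\Sigma^{i}) = 0$ for $i \geq 2$, to conclude that the polar contribution still vanishes.

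For the identification $\operatorname{Eu}_{0}({_{i}}X) = \binom{n}{i-1}$, the key is to match the local polar multiplicities of ${_{i}}X$ with those of $\Sigma^{i}$. Using the L\^{e}--Teissier formula (Theorem \ref{LT}) one has
$$\operatorname{Eu}_{0}({_{i}}X) = \sum_{j=0}^{d(i)-1} (-1)^{d(i)-j-1} m_{j}({_{i}}X, 0).$$
Applying Proposition 2.15 of \cite{GR} to each slice ${_{i}}X_{\ell}$ together with the nice condition identifies $m_{d(i)-\ell}({_{i}}X_{\ell})$ with $m_{0}(P_{d(i)-\ell}(\Sigma^{i}))$, while iterating Lemma \ref{lemma} along successive generic hyperplane sections yields $m_{d(i)-\ell}({_{i}}X_{\ell}) = m_{d(i)-\ell}({_{i}}X)$. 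Running the same L\^{e}--Teissier expansion for $\Sigma^{i}$ and comparing term by term gives $\operatorname{Eu}_{0}({_{i}}X) = \operatorname{Eu}_{0}(\Sigma^{i}) = e(i,n) = \binom{n}{i-1}$ by Theorem \ref{EulerFormula}.

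Substituting both identities into Theorem \ref{stabilization} produces the stated formula, with the $i=1$ ICIS contribution carried over untouched. The main obstacle will be the second identification: one must trace carefully how the nice condition propagates along the successive generic slices ${_{i}}X_{\ell}$ and how Lemma \ref{lemma} composes with Proposition 2.15 of \cite{GR} so that each local polar multiplicity of ${_{i}}X$ at the origin genuinely inherits the value from $\Sigma^{i}$. This is precisely where the \emph{good approximation} hypothesis is strictly stronger than mere genericity of the stabilization, and any dimension bookkeeping near the boundary case $q = n(n+k)$ needs to be double-checked using Proposition \ref{cidinha}.
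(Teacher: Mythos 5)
Your proposal is correct and follows essentially the same route as the paper: both arguments use the nice condition to kill the $e(JM,N)$ terms, identify $F_{\ell}(\mathbb{C}^{q-\ell})\cdot P_{d(i)-\ell}(\Sigma^{i})$ with the polar multiplicities of $\Sigma^{i}$ (empty when $d(i)-\ell>\dim\Sigma^{i}$, and vanishing in the top case by Proposition \ref{cidinha}), and then conclude via the L\^{e}--Teissier formula and Theorem \ref{EulerFormula} that $\operatorname{Eu}_{0}({_{i}}X)=\binom{n}{i-1}$. Your write-up merely makes explicit the two identities ($m_{d(i)}({_{i}}X)=0$ and the matching of Euler obstructions) that the paper's terser proof leaves implicit.
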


\begin{proof}

If $X$ satisfies the nice condition, then $e(JM(_{i}X_{\ell}), N(_{i}X))=0$, hence $m_{d(i)-\ell}(_{i}X_{\ell})= F(\mathbb{C}^{q-\ell})\cdot P_{d(i)-\ell}(\Sigma^{i})$.

But $F(\mathbb{C}^{q-\ell})\cdot P_{d(i)-\ell}(\Sigma^{i})= m_{d(i)-\ell}(\Sigma^{i})$
when $d(i)-\ell \leq d'(i)= \dim(\Sigma^{i})$.

When $d(i)- \ell > d'(i)$, the polar is empty, the polar multiplicity is zero.

Recall that, by Proposition \ref{cidinha}, the top polar multiplicity of the generic determinantal variety $m_{d'(i)}(\Sigma^{i})$ is also zero.

Hence, since $X$ is a good approximation, it satisfies the nice condition and using the formula of the Theorem \ref{EulerFormula} to compute the Euler obstruction we finish the proof.
\end{proof}

\begin{remark} Some interesting special cases are:
\begin{itemize}

\item[(1)] If $n=2, t=2$ then $X$ is defined by maximal minors. $$(-1)^{d(2)-d(1)}=(-1)^{(q-(k+1))-q+2(k+2)}=(-1)^{k+1},$$ and therefore $$\chi( \widetilde{X})=(-1)^{k+1}n_{1 2}\chi( _1 \widetilde{X})+\binom{2}{1}= 2 - \chi(_1\widetilde{X}).$$
And using the reduced Euler characteristic we have
$$\overline{\chi}(\widetilde{X})= -\overline{\chi}(_1 \widetilde{X})= (-1)^{q-(4+2k)}(-1)\mu(_1 X)=(-1)^{q-3}\mu(_1X).$$

\item[(2)] If $n \in \mathbb{Z}, n \geq 2, t=2$ then $X$ is defined by $2 \times 2$ minors. The algebraic variety $\Sigma^{2} \subset \operatorname{Hom}(n,n+k)$ has isolated singularity at $0$, so $$e(2,n)= \chi(\Sigma^{2}\cap H_{t}),$$where $H_{t}$ is parallel to generic hyperplane through the origin.

By Ebeling and Gusein--Zade this is $$1+ (-1)^{2} \binom{n-1}{2-1}= 1+n -1=n,$$thus $$\chi( \widetilde{X})= -\chi(_1 \widetilde{X})+n.$$
\end{itemize}

\end{remark}

%Note that for the singularities we consider here we can always chose for each $t, n$ and $k$ such that $Eu_{_t X}(0) = e(t,n)$.

%Applying the formula of the Theorem \ref{EulerFormula} for the Euler obstruction to the earlier formula of Theorem \ref{stabilization} for the Euler characteristic of a stabilization of a generic determinantal variety we have the following result.

 %\begin{proposition}Suppose that $_tX$ is an EIDS that satisfies the nice condition, then,$$\chi(_t\widetilde{X})= (-1)^{\dim \Sigma^{t}+k(t-1)} ( \chi(_1 \widetilde{X})) + \sum_{s=2}^{t} (-1)^{k(t-s)+d(t)-d(s)}\binom{n-s}{n-t}\binom{n}{s-1}.$$\end{proposition}

Now, using (2), from the proof of Theorem \ref{EulerFormula}, and using that if $X$ is an EIDS of type $(n+k,n,s)$, defined by the analytic map $F\: U \subset \mathbb{C}^{q} \to \operatorname{Hom} (n,n+k)$, when $q> n(n+k)$ we have a submersion on the strata different from $\{0\}$, so we have a fibered structure and when $q < n(n+k)$ we have an immersion, using the transversality of $F$ we have these two next results.

\begin{proposition} Let $X \subset \mathbb{C}^{q}$ be an EIDS of type $(n+k,n,s)$, defined by the analytic map $F\: U \subset \mathbb{C}^{q} \to \operatorname{Hom} (n,n+k)$, with $F(0)=0$ and $n(n+k)>q$. In this setting we have
$$\operatorname{Eu}_{0}(X)= e(s-1,n-1) + \sum_{i=2}^{s} \overline{\chi}_{*}(i,n)\binom{n-i}{s-i},$$where $\overline{\chi}_{*}(i,n)= \overline{\chi}(_{i}X \cap l^{-1}(t_{0})\cap B_{\varepsilon}(0))$ and $l$ a generic linear form.
\end{proposition}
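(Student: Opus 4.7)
The plan is to mimic, step by step, the derivation that leads to equation~(2) in the proof of Theorem \ref{EulerFormula}, but now carried out on $X$ rather than on $\Sigma^{s}$, and to absorb the effect of the map $F$ into the characteristics $\overline{\chi}_{*}(i,n)$. The Whitney stratification of $X$ is $\{{}_{i}X\setminus{}_{i-1}X\}_{1\le i\le s}$, with ${}_{1}X=F^{-1}(\{0\})=\{0\}$ because $F$ is an immersion near the origin ($n(n+k)>q$). Applying Theorem \ref{BLS} with a generic linear form $l$ gives
\[
\operatorname{Eu}_{0}(X)=\sum_{i=2}^{s}\chi\bigl(({}_{i}X\setminus{}_{i-1}X)\cap B_{\varepsilon}\cap l^{-1}(t_{0})\bigr)\cdot\operatorname{Eu}_{p_{i}}(X),
\]
the $i=1$ term vanishing because $\{0\}\cap l^{-1}(t_{0})=\emptyset$.

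The key geometric input is the identification of $\operatorname{Eu}_{p_{i}}(X)$ for $p_{i}$ a point of the stratum ${}_{i}X\setminus{}_{i-1}X$. Since $F$ is transverse to the rank stratification at $p_{i}\neq 0$, and since $q<n(n+k)$ forces $F$ to be an immersion there, a neighborhood of $p_{i}$ in $X$ is biholomorphic to a transverse slice of $\Sigma^{s}$ by the smooth submanifold $F(\mathbb{C}^{q})$ at $F(p_{i})\in\Sigma^{i}\setminus\Sigma^{i-1}$. By the product structure along Whitney strata (used in the same way in the proof of Lemma \ref{lemma1}, together with the invariance of the Euler obstruction under transverse intersection with a smooth submanifold), this slice has the same Euler obstruction as the normal slice to $\Sigma^{i}\setminus\Sigma^{i-1}$ inside $\Sigma^{s}$; thus $\operatorname{Eu}_{p_{i}}(X)=e(s-i+1,n-i+1)$, precisely as in equation~\eqref{cut}.

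Substituting this and applying inclusion--exclusion,
\[
\chi\bigl(({}_{i}X\setminus{}_{i-1}X)\cap B_{\varepsilon}\cap l^{-1}(t_{0})\bigr)=\chi_{*}(i,n)-\chi_{*}(i-1,n)=\overline{\chi}_{*}(i,n)-\overline{\chi}_{*}(i-1,n),
\]
yields the same recurrence as in Lemma \ref{lemma1} with $\overline{\chi}_{*}$ in place of $\overline{\chi}$:
\[
\operatorname{Eu}_{0}(X)=\sum_{i=2}^{s}\bigl(\overline{\chi}_{*}(i,n)-\overline{\chi}_{*}(i-1,n)\bigr)\,e(s-i+1,n-i+1).
\]
Using $\overline{\chi}_{*}(1,n)=-1$ (since ${}_{1}X\cap l^{-1}(t_{0})=\emptyset$) and regrouping exactly as in the proof of Theorem \ref{EulerFormula} gives
\[
\operatorname{Eu}_{0}(X)=e(s-1,n-1)+\sum_{i=2}^{s}\overline{\chi}_{*}(i,n)\bigl(e(s-i+1,n-i+1)-e(s-i,n-i)\bigr).
\]
Pascal's rule, together with Theorem \ref{EulerFormula}, gives $e(s-i+1,n-i+1)-e(s-i,n-i)=\binom{n-i+1}{s-i}-\binom{n-i}{s-i-1}=\binom{n-i}{s-i}$, and the stated formula follows.

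The main obstacle is the justification of $\operatorname{Eu}_{p_{i}}(X)=e(s-i+1,n-i+1)$: one must check that the transverse-immersion hypothesis genuinely identifies the germ $(X,p_{i})$ with (a smooth factor times) the normal slice germ of $\Sigma^{s}$ along $\Sigma^{i}\setminus\Sigma^{i-1}$, so that the Euler obstruction, being multiplicative across the smooth factor, reduces to the one already computed in Lemma \ref{lemma1}. Once this is in hand, all remaining combinatorics is identical to the proof of Theorem \ref{EulerFormula}.
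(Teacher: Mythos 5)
Your proposal is correct and follows exactly the route the paper intends: the paper offers no written proof beyond a remark pointing to equation (2) in the proof of Theorem \ref{EulerFormula} and to the immersion/transversality structure when $q<n(n+k)$, and your argument (BLS formula for the stratification $\{{}_iX\setminus{}_{i-1}X\}$, identification $\operatorname{Eu}_{p_i}(X)=e(s-i+1,n-i+1)$ via the transverse slice and local product structure, then the same regrouping as in Theorem \ref{EulerFormula}) is precisely the intended derivation. The step you flag as the main obstacle is indeed the only geometric content, and your justification of it via the normal-slice product decomposition and multiplicativity of the Euler obstruction is the right one.
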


%\begin{remark}
%This amounts to a stabilization of $_{i} X \cap l^{-1}(0) \cap B_{\varepsilon}(0)$; it is %possible to compute the invariant using material about stabilizations, most completely %for the maximal minor singularities.

%Note if $n=2$ this reduces to $1+\overline{\chi}_{*}(2,2)= {\chi}_{*}(2,2)$, as it should.

%\end{remark}

\begin{proposition} Let $X \subset \mathbb{C}^{q}$ be an EIDS of type $(n+k,n,s)$, defined by the analytic map $F\: U \subset \mathbb{C}^{q} \to \operatorname{Hom} (n,n+k)$, with $F(0)=0$ and $q > n(n+k)$. In this setting we have
$$\operatorname{Eu}_{0}(X)= \binom{n}{s-1} + \overline{\chi}(_{1}X \cap H) \binom{n-1}{s-1} + \sum\limits_{i=2}^{s} \overline{\chi_{*}}(i,n) \binom{n-1}{s-1}.$$
\end{proposition}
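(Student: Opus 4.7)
The plan is to apply the Brasselet--L\^{e}--Seade formula (Theorem \ref{BLS}) to $X$, using the Whitney stratification $\{_iX\setminus{_{i-1}X}\}_{1\le i\le s}$ pulled back from the rank stratification of $\operatorname{Hom}(n,n+k)$. The new feature compared with Theorem \ref{EulerFormula} and the preceding proposition is that when $q>n(n+k)$ the stratum $_1X\setminus\{0\}$ has positive dimension, so the $i=1$ term in BLS no longer vanishes.

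First I would identify the value of $\operatorname{Eu}_{p_i}(X)$ on each stratum. Because $q>n(n+k)$ and $F$ is transverse to every stratum away from the origin, the map $F$ is a submersion at any point $p$ where it has not already been made one by transversality to $\{0\}$. The implicit function theorem then yields a local analytic product decomposition
$$(X,p)\;\cong\;(\Sigma^{s},F(p))\times(\mathbb{C}^{q-n(n+k)},0).$$
Multiplicativity of the Euler obstruction on analytic products, the identification \eqref{cut} from the proof of Lemma \ref{lemma1}, and Theorem \ref{EulerFormula} combine to give
$$\operatorname{Eu}_{p_i}(X)=e(s-i+1,n-i+1)=\binom{n-i+1}{s-i},\qquad 1\le i\le s.$$
In particular the $i=1$ stratum contributes the value $\binom{n}{s-1}$.

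Second, BLS together with inclusion--exclusion gives
$$\operatorname{Eu}_0(X)=\sum_{i=1}^{s}\bigl(\chi_{*}(i,n)-\chi_{*}(i-1,n)\bigr)\binom{n-i+1}{s-i},$$
where $\chi_{*}(i,n)=\chi(_{i}X\cap B_\varepsilon\cap l^{-1}(\delta))$ and $\chi_{*}(0,n)=0$. An Abel summation, combined with Pascal's identity $\binom{n-i+1}{s-i}-\binom{n-i}{s-i-1}=\binom{n-i}{s-i}$ and the boundary value $\binom{n-s+1}{0}=1$, rearranges this to the compact form
$$\operatorname{Eu}_0(X)=\sum_{i=1}^{s}\chi_{*}(i,n)\,\binom{n-i}{s-i}.$$
Passing to the reduced Euler characteristic via $\chi_{*}(i,n)=\overline{\chi}_{*}(i,n)+1$ and applying the hockey-stick identity $\sum_{i=1}^{s}\binom{n-i}{s-i}=\binom{n}{s-1}$ splits off the leading $\binom{n}{s-1}$ term. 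Isolating the $i=1$ summand $\overline{\chi}(_{1}X\cap H)\binom{n-1}{s-1}$ then yields the stated formula.

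The main obstacle is establishing the local product decomposition at points of $_1X\setminus\{0\}$ (where $F(p)=0$). This is what genuinely requires the dimension hypothesis $q>n(n+k)$: only then does transversality to the smallest stratum $\{0\}$ force $dF_p$ to be surjective, and only surjectivity (not mere transversality to each stratum) gives the clean product $\Sigma^{s}\times\mathbb{C}^{q-n(n+k)}$ that turns the Euler obstruction calculation on $X$ into a calculation on $\Sigma^s$. Once that step is secured, the remainder of the proof is combinatorial bookkeeping with two standard binomial identities.
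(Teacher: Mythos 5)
Your argument is correct and is essentially the route the paper intends: the paper's own ``proof'' is a one-sentence remark preceding the proposition, saying to rerun equation (2) from the proof of Theorem \ref{EulerFormula} using that for $q>n(n+k)$ transversality makes $F$ a submersion away from the origin, so each stratum of $X$ is locally a product of the corresponding generic stratum with a smooth factor; your local product decomposition at points of $_{1}X\setminus\{0\}$, the BLS summation, and the Abel/Pascal/hockey-stick rearrangement fill in exactly that sketch. One remark: your computation produces $\sum_{i=2}^{s}\overline{\chi}_{*}(i,n)\binom{n-i}{s-i}$ rather than the printed $\sum_{i=2}^{s}\overline{\chi}_{*}(i,n)\binom{n-1}{s-1}$; since the companion proposition for $q<n(n+k)$ and equation (2) both carry the factor $\binom{n-i}{s-i}$ (and the two agree in the only case the paper checks, $n=s=2$), the printed binomial is evidently a typo and your formula is the intended statement.
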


\begin{remark}
Note that in this case, when  $q > n(n+k)$ there are $2$ additional terms. Also note that $_{1}X$ is always an ICIS so $\overline{\chi}(_{1}X \cap H)$ is $\mu (_{1}X \cap H)$ up to a sign. Note also that if $n=2$ we get$$\operatorname{Eu}_{0}(X)= 2 + (-1)^{q-1}\mu (_{1}X \cap H) + \overline{\chi}(_{2}X \cap H),$$ which is Siesqu\'{e}n's formula \cite{Nancy}.
\end{remark}

Generalizing Definition \ref{polar}, we need now the notion of polar varieties of a module $M$.

\begin{definition}\label{PolarGen}
Given a submodule $M$ of a free
$\mathcal{O}_{X^{d}}$ module $F$ of rank $p$ (in our case $F = \mathcal{O}^{p}_{X^{d}} $). Let us say the generic rank of $M$ is $g$. The \textit{polar variety of codimension $k$} of $M$ in $X$, denoted $P_k(M)$, is constructed by intersecting $\Projan{\mathcal R}(M)$
 with $X\times H_{g+k-1}$ where $H_{g+k-1}$ is a general plane of codimension $g+k-1$, then projecting to
$X$.
\end{definition}

Let us define the local Euler obstruction of a module $M \subset \mathcal{O}_{X^{d}}^{p}$.

\begin{definition}\label{Genob}
Assume $X$ equidimensional, generically reduced. Given a sheaf of modules $M \subset \mathcal{O}_{X^{d}}^{p}$, $M$ with the same generic rank on each component of $X$.
We define $$\operatorname{Eu}_{0}(M)= \sum\limits_{i=0}^{d-1}(-1)^{i}m_0(P_{i}(M)),$$where $P_{i}(M)$ is the polar variety of $M$ of codimension $i$. Since $X$ is generically reduced, $P_{0}(M)=X$.
\end{definition}

\begin{remark}
When $M= JM(X)$ the generalization of the polar varieties $(Def.\ \ref{PolarGen})$ coincides with the classical notion of polar varieties used by L\^{e} and Teissier to prove Theorem \ref{LT}. In other words, in this case we have $\operatorname{Eu}_{0}(JM(X))= \operatorname{Eu}_{0}(X)$.

\end{remark}

\begin{theorem}\label{theo20}
Given $F\: \mathbb{C}^{q}\to \operatorname{Hom}(n,n+k)$, with $0 <q \leq n(n+k),$ such that $F$ defines a EIDS $X$. Let $M_{i}$,  $i>0$ be defined as $$JM((F\mid_{H^{c(r)+i}})^{-1}(\Sigma^{r}))= JM(F^{-1}(\Sigma^{r})\cap H^{c(r)+i}).$$

Here $H^{c(r)+i}$ is a generic plane of dimension $c(r)+i$, where $c(r)$ is the codimension of $\Sigma^{r}$ in $\operatorname{Hom}(n,n+k)$. Let $N_{i}= (F\mid_{H^{c(r)+i}})^{*}(JM(\Sigma^{r}))$. We let $M_{0}, N_{0}= 0$. Then,
$$\operatorname{Eu}_{0}(JM(X))= \operatorname{Eu}_{0}(X)=  \sum\limits_{i=0}^{d-1} (-1)^{i} e(M_{i}, N_{i}; \mathcal{O}_{X\cap H^{c(r)+i}})  + \operatorname{Eu}_{0}(F^{*}(JM(\Sigma^{r}))).$$
\end{theorem}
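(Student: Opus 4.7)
The plan is to expand both $\operatorname{Eu}_0(JM(X))$ and $\operatorname{Eu}_0(F^*(JM(\Sigma^r)))$ via Definition~\ref{Genob} as alternating sums of polar multiplicities $m_0(P_i(\cdot))$, and then identify each term-wise difference with the Buchsbaum--Rim multiplicity $e(M_i,N_i;\mathcal{O}_{X\cap H^{c(r)+i}})$ coming from the appropriate generic slice. The remark immediately before the theorem already records $\operatorname{Eu}_0(JM(X))=\operatorname{Eu}_0(X)$, so the first equality is free, and the real content is the identity
$$\operatorname{Eu}_0(JM(X))-\operatorname{Eu}_0(F^*(JM(\Sigma^r)))=\sum_{i=0}^{d-1}(-1)^{i}\,e(M_i,N_i;\mathcal{O}_{X\cap H^{c(r)+i}}).$$

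First I would check that $JM(X)\subset \mathcal{O}_X^{\,q}$ and $F^*(JM(\Sigma^r))\subset \mathcal{O}_X^{\,n(n+k)}$ are modules on $X^d$ of the same generic rank $c(r)$: on $X_{\mathrm{reg}}$ the chain rule $JM(X)=dF^{T}\!\cdot F^*(JM(\Sigma^r))$ is a fibrewise isomorphism of images of the same rank, because $F$ is transverse to $\Sigma^r$ off the origin. So Definition~\ref{Genob} applies to both modules; subtracting the two expansions reduces the theorem to the single-term identity
$$m_0(P_i(JM(X)))-m_0(P_i(F^*(JM(\Sigma^r))))=e(M_i,N_i;\mathcal{O}_{X\cap H^{c(r)+i}})$$
for each $i=0,\dots,d-1$, with the convention $M_0=N_0=0$ handling the $i=0$ boundary term.

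To establish this single-term identity I would invoke Gaffney's Multiplicity Polar Theorem for pairs of modules. The generic plane $H^{c(r)+i}$ has dimension $c(r)+i$ precisely so that the slice $X\cap H^{c(r)+i}$ has dimension exactly $i$, which is complementary to the codimension of the polar $P_i$ in $X$; restricting $JM(X)$ and $F^*(JM(\Sigma^r))$ to the slice yields $M_i$ and $N_i$, and by the transversality of $F$ both restricted modules retain common generic rank $c(r)$. The theory of multiplicity of pairs of modules (as recalled in the Appendix and used throughout \cite{GR,GRa}) then equates the jump $m_0(P_i(JM(X)))-m_0(P_i(F^*(JM(\Sigma^r))))$ at codimension $i$ in the ambient with the Buchsbaum--Rim multiplicity $e(M_i,N_i;\mathcal{O}_{X\cap H^{c(r)+i}})$ of the pair on the slice, which is precisely the desired identity. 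Summing over $i$ with alternating signs reassembles the claimed formula.

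The hard part is arranging the genericity of $H^{c(r)+i}$ uniformly: a single plane must be simultaneously generic for the $i$-codimensional polar of $JM(X)$, generic for that of $F^*(JM(\Sigma^r))$, transverse to the Whitney stratification of $X$ off the origin, and such that the pair $(M_i,N_i)$ on the slice has its expected generic rank. The local analytic triviality of the rank stratification of $\operatorname{Hom}(\mathbb{C}^n,\mathbb{C}^{n+k})$ noted in Section~1, combined with the transversality of $F$ off the origin, ensures that such a common generic choice exists for each $i$; once this is secured, the Multiplicity Polar Theorem delivers the term-by-term identity uniformly, and the proof concludes.
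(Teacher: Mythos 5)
Your plan is correct and follows essentially the same route as the paper: expand both Euler obstructions as alternating sums of polar multiplicities, slice each codimension-$i$ polar down to the $i$-dimensional section $X\cap H^{c(r)+i}$, and identify the term-wise difference with $e(M_i,N_i)$ via the Multiplicity Polar Theorem (the paper does this by passing through the intermediate invariant $m_i(X\cap H^{c(r)+i})$ and citing Prop.~2.15 of \cite{GR}). The only point to watch is that $M_i=JM(X\cap H^{c(r)+i})$ is the Jacobian module of the slice rather than the restriction of $JM(X)$ to it, so the interchange of slicing with taking polar varieties requires the L\^e--Teissier transversality result (\cite[Cor.~2.3.2.1]{LT2}), which the paper invokes explicitly.
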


\begin{proof}

By definition we have
\begin{align}
\operatorname{Eu}_{0}(JM(X)) &= \sum\limits_{i=0}^{d-1}(-1)^{i} m_0(P_{i}(X)) \nonumber \\
&= \sum\limits_{i=0}^{d-1} (-1)^{-1} m_0(P_{i}(X)\cap H_{1}\cap \dots \cap H_{l}), \tag{$\ast$}
\end{align}
%$$\operatorname{Eu}_{0}(JM(X))= \sum\limits_{i=0}^{d-1}(-1)^{i} m(P_{i}(X)) =$$
%$$(*) = \sum\limits_{i=0}^{d-1} (-1)^{-1} m(P_{i}(X)\cap H_{1}\cap \cdots \cap H_{l}),$$
where $l= q-c(r)-i-1$.

Note that since $P_{i}(X)$ has dimension $q-c(r)-i$ the intersection has dimension 1. As the hyperplanes $H_{1}, \dots, H_{\ell}$ are generic, by  \cite[Cor.\ 2.3.2.1]{LT2} (see also \cite[Prop.\ 5.4.2]{Te}) we have

$$\text{Equation ($\ast$)} = \sum\limits_{i=0}^{d-1}(-1)^{i}m_0(P^{1}(X \cap H^{c(r)+i+1})) $$ by the genericity of polar varieties this is $\sum\limits_{i=0}^{d-1}(-1)^{i}m_{i}(X \cap H^{c(r)+i})$.

If $i=0$ then $m_{0}(X \cap H^{c(r)})$ is the multiplicity of $X$.

$\nota{??}=  \sum\limits_{i=0}^{d-1} (-1)^{i} e(M_{i},N_{i}) + \sum\limits_{i=0}^{d-1}\operatorname{Im}(F\mid_{H^{c(r)+i}}) \cdot P_{i}(\Sigma^{r})$.

But,
\begin{multline*}
\operatorname{Im}(F\mid_{H^{c(r)+i}}) \cdot P_{i}(\Sigma^{r}) = m_0(P^{1}(M\mid_{H^{c(r)+i+1}})^{*}(JM(\Sigma^{r})))=\\
=m_0(P_{1}(M^{*}(JM(\Sigma^{r})))\cap H_{1}\cap H_{2} \cap \dots \cap H_{l})=\\
= m_0(P_{1}(M^{*}(JM(\Sigma^{r}))))
\end{multline*}
where $l=q-c(r)-i-1$.

The result follows.
\end{proof}

This is a very nice formula because it gives the correction term for the difference between the two Euler obstructions in terms of multiplicity of pair of modules.

For the following Corollary, it is convenient to change our notation a little since the main tool is based on \cite{GRa}, so we match the notation there. We let ${\overline \Sigma}_r$ denote $\Sigma^{r+1},$ that is we let ${ \Sigma}_r$ denote the matrices of kernel rank $r.$

\begin{corollary}\label{cor21} Suppose that $X\subset {\mathbb C}^q$ and its generic plane sections are good approximations to  ${\overline \Sigma}_r\subset Hom(n,n+k)$. Suppose that $n(n+k)>q> \dim({\Sigma}^r)$. Then $Eu_0(X)=Eu_0({\overline \Sigma}_r)$.
\end{corollary}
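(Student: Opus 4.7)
The plan is to feed $X$ into Theorem \ref{theo20} so as to reduce $\operatorname{Eu}_{0}(X)$ to the Euler obstruction of the pullback module $F^{*}(JM(\overline{\Sigma}_{r}))$, and then to identify that module-theoretic Euler obstruction with $\operatorname{Eu}_{0}(\overline{\Sigma}_{r})$ using the two clauses in the definition of \emph{good approximation}. Since $q\le n(n+k)$, Theorem \ref{theo20} (with the $\Sigma^{r}$ there read as $\overline{\Sigma}_{r}=\Sigma^{r+1}$ of the corollary) yields
\begin{equation*}
\operatorname{Eu}_{0}(X)=\sum_{i=0}^{d-1}(-1)^{i}\,e\bigl(M_{i},N_{i};\mathcal{O}_{X\cap H^{c(r)+i}}\bigr)+\operatorname{Eu}_{0}\bigl(F^{*}(JM(\overline{\Sigma}_{r}))\bigr),
\end{equation*}
where $M_{i}=JM(X\cap H^{c(r)+i})$ and $N_{i}=(F|_{H^{c(r)+i}})^{*}(JM(\overline{\Sigma}_{r}))$. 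These are exactly the modules $JM({}_{r+1}X_{\ell})$ and $N({}_{r+1}X_{\ell})$ of the nice condition, with $\ell=q-c(r)-i$ running through $1,\dots,d-1$.

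Because $X$ and its generic plane sections are good approximations, the first clause of the definition forces $e(M_{i},N_{i})=0$ for every $i\ge 1$ (while the $i=0$ term vanishes by the convention $M_{0}=N_{0}=0$), so the displayed equation collapses to $\operatorname{Eu}_{0}(X)=\operatorname{Eu}_{0}(F^{*}(JM(\overline{\Sigma}_{r})))$. Expanding the right-hand side via Definition \ref{Genob} gives
\begin{equation*}
\operatorname{Eu}_{0}\bigl(F^{*}(JM(\overline{\Sigma}_{r}))\bigr)=\sum_{i=0}^{d-1}(-1)^{i}\,m_{0}\bigl(P_{i}(F^{*}(JM(\overline{\Sigma}_{r})))\bigr).
\end{equation*}
Since $X$ is an EIDS and $q<n(n+k)$, the map $F$ is transverse to the rank stratification away from $0$ and behaves as a generic immersion; a Kleiman-style general-position argument (as used in the proof of Theorem \ref{theo20}) identifies the polar of the pullback module with the pullback of the polar of $JM(\overline{\Sigma}_{r})$, and the second clause of the good-approximation definition then yields
\begin{equation*}
m_{0}\bigl(P_{i}(F^{*}(JM(\overline{\Sigma}_{r})))\bigr)=F(\mathbb{C}^{q})\cdot P_{i}(\overline{\Sigma}_{r})=m_{0}\bigl(P_{i}(\overline{\Sigma}_{r})\bigr)
\end{equation*}
for $0\le i\le d-1$.

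The corollary then comes down to matching this truncated sum with $\operatorname{Eu}_{0}(\overline{\Sigma}_{r})=\sum_{i=0}^{d'-1}(-1)^{i}m_{0}(P_{i}(\overline{\Sigma}_{r}))$, where $d'=\dim\overline{\Sigma}_{r}>d$. The remaining terms sit in codimensions $d\le i\le d'-1$, so the whole corollary hinges on the vanishing
\begin{equation*}
m_{0}\bigl(P_{i}(\overline{\Sigma}_{r})\bigr)=0\qquad\text{for }d\le i\le d'-1.
\end{equation*}
This is the main obstacle, and it is precisely the ``unexpectedly empty polar'' phenomenon emphasized in the introduction: for the generic determinantal $\overline{\Sigma}_{r}$ the generic fibre of its conormal is so small that the polar varieties become empty as soon as the codimension exceeds a threshold controlled by $\dim\Sigma^{r}$, the dimension of the singular stratum. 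The role of the hypothesis $q>\dim\Sigma^{r}$ is to guarantee that $d-1$ already lies at or beyond this threshold, so that all the missing polar multiplicities genuinely vanish at $0$; once this is established, the two sums agree and $\operatorname{Eu}_{0}(X)=\operatorname{Eu}_{0}(\overline{\Sigma}_{r})$ follows. Pinning down the exact vanishing range and verifying that the inequality $q>\dim\Sigma^{r}$ is the right bookkeeping is the step I expect to take the most care; everything else is formal manipulation of the module-theoretic Euler obstruction together with the two clauses of the good-approximation definition.
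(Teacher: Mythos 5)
Your reduction is the same as the paper's: feed $X$ into Theorem \ref{theo20}, kill the correction terms $e(M_i,N_i)$ with the first clause of the good-approximation hypothesis, identify the polar multiplicities of $F^{*}(JM(\overline{\Sigma}_r))$ with those of $\overline{\Sigma}_r$ via the second clause, and observe that the only remaining issue is that $\operatorname{Eu}_0(X)$ is a sum over codimensions $0,\dots,\dim X-1$ while $\operatorname{Eu}_0(\overline{\Sigma}_r)$ is a sum over codimensions $0,\dots,\dim\overline{\Sigma}_r-1$. You correctly isolate the gap -- the vanishing of the polar varieties of $\overline{\Sigma}_r$ in the missing range of codimensions -- but then you explicitly leave it unproved (``Pinning down the exact vanishing range \dots is the step I expect to take the most care''). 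That step is the actual mathematical content of the corollary; without it the argument is incomplete, and the hypothesis $q>\dim(\Sigma^r)$ is doing no verified work in your write-up.

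The paper closes this gap by quoting the non-emptiness criterion for the polar varieties of generic determinantal varieties from \cite{GRa}: the polar $\Gamma^u(\overline{\Sigma}_r)$ of dimension $u$ is non-empty exactly when $u\ge (n-r)(n+k-r)=\operatorname{codim}(\overline{\Sigma}_r)$, i.e.\ the polar of codimension $c$ is non-empty only for $c\le \dim(\overline{\Sigma}_r)-\operatorname{codim}(\overline{\Sigma}_r)$. One then checks by a two-line dimension count that the requirement ``every non-empty polar of $\overline{\Sigma}_r$ has codimension $<\dim X=q-\operatorname{codim}(\overline{\Sigma}_r)$'' is literally equivalent to the hypothesis $q>\dim(\Sigma^r)$, so the two alternating sums range over the same non-empty polars and agree term by term. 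To complete your proof you need to import (or reprove) that threshold from \cite{GRa} and carry out this bookkeeping; your heuristic appeal to the ``small conormal fiber'' phenomenon from the introduction gestures at the right reason but does not establish the precise codimension bound that the hypothesis on $q$ is calibrated against.
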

\begin{proof} The hypotheses, together with the last theorem imply that  $Eu_0(X)= Eu_0(F^*(JM({\overline \Sigma}_r)))$, and that the multiplicities of the polar varieties of $F^*(JM({\overline \Sigma}_r))$ agree with those of the corresponding polar varieties of ${\overline \Sigma}_r$. It remains to show that the dimension condition on $ {\mathbb C}^q$ implies that $X$ and ${\overline \Sigma}_r$ have the same number of non-empty polar varieties. 

From \cite{GRa},  the condition that $\Gamma^u({\overline \Sigma}_r)$ be non-empty is:

$$u\ge (n-r)(n+k-r).$$
Phrased in terms of the codimension $c$, this is:

$$c= \dim({\overline \Sigma}_r)-\dim (\Gamma^u({\overline \Sigma}_r))\le \dim ({\overline \Sigma}_r)-(n-r)(n+k-r)$$
$$=\dim ({\overline \Sigma}_r)- \operatorname{codim} ({\overline \Sigma}^r).$$

So we want $\dim ({\overline \Sigma}_r)-\operatorname{codim} ( {\overline \Sigma}^r)< \dim(X)=q-\operatorname{codim}({\overline \Sigma}_r).$
This is equivalent to:
$$\dim({\overline \Sigma}_r)-[(n)(n+k)-\dim ({\overline \Sigma}^r)]<q-[(n)(n+k)-\dim ({\overline \Sigma}_r)]$$
and 
$$\dim ({\overline \Sigma}^r)< q,$$
which is the hypothesis on $q$.

\end{proof}

\begin{exmp} Suppose $r=1$, $k=1$. If $n=2$, then $6>q>dim {\overline \Sigma}^1=\dim {\overline \Sigma}_1=4$, so $q=5$ is the only value that fits. However, 
for general $n$ we get $n(n+1)>q>\dim {\overline \Sigma}^1=2n$, so the number of possible values of $q$ grows quadratically.
\end{exmp}

\section*{Appendix. Multiplicity of pairs of modules}

%\subsection{Integral closure of modules}

Let  $(X, x)$ be the germ of a complex analytic space, $\dim X=d$, and $X$ a
small representative of the germ and let $\mathcal{O}_{X}$ denote the
structure sheaf on a complex analytic space $X$.  The key tool in the work is the theory of integral closure of modules, which we now introduce.

\begin{definition} Suppose $(X, x)$ is the germ of a complex analytic space,
$M$ a submodule of $\mathcal{O}_{X,x}^{p}$. Then $h \in
\mathcal{O}_{X,x}^{p}$ is in the integral closure of $M$, denoted
$\overline{M}$, if for all analytic $\phi : (\mathbb{C}, 0) \to (X,
x)$, $h \circ \phi \in (\phi^{*}M)\mathcal{O}_{1}$. If $M$ is a
submodule of $N$ and $\overline{M} = \overline{N}$ we say that $M$
is a reduction of $N$.
\end{definition}

To check the definition it suffices to check along a finite number of curves whose generic point is in the Zariski open subset of $X$ along which $M$ has maximal rank (cf.\ \cite{G-2}).

If a module $M$ has finite colength in $\mathcal{O}_{X,x}^{p}$, it
is possible to attach a number to the module, its Buchsbaum--Rim
multiplicity,  $e(M,\mathcal{O}_{X,x}^{p})$. We can also define the multiplicity $e(M,N)$ of a pair of
modules $M \subset N$, $M$ of finite colength in $N$, as well, even
if $N$ does not have finite colength in $\mathcal{O}_{X}^{p}$.

We recall how to construct the multiplicity of a pair of modules using the approach of
Kleiman and Thorup \cite{KT}. Given a submodule $M$ of a free
$\mathcal{O}_{X^{d}}$ module $F$ of rank $p$ (in our case $F = \mathcal{O}^{p}_{X^{d}} $), we can associate a
subalgebra $\mathcal{R}(M)$ of the symmetric $\mathcal{O}_{X^{d}}$
algebra on $p$ generators. This is known as the Rees algebra of $M$.
If $(m_{1}, \dots ,m_{p})$ is an element of $M$ then $\sum
m_{i}T_{i}$ is the corresponding element of $\mathcal{R}(M)$. Then
$\Projan \mathcal{R}(M)$, the projective analytic spectrum of
$\mathcal{R}(M)$ is the closure of the projectivised row spaces of
$M$ at points where the rank of a matrix of generators of $M$ is
maximal. Denote the projection to $X^{d}$ by $c$. If $M$ is a
submodule of $N$ or $h$ is a section of $N$, then $h$ and $M$
generate ideals on $\Projan \mathcal{R}(N)$; denote them by $\rho(h)$
and $\rho(\mathcal{M})$. If we can express $h$ in terms of a set of
generators $\{n_{i}\}$ of $N$ as $\sum g_{i}n_{i}$, then in the
chart in which $T_{1}\neq 0$, we can express a generator of
$\rho(h)$ by $\sum g_{i}T_{i}/T_{1}$. Having defined the ideal sheaf
$\rho(\mathcal{M})$, we blow it up.

On the blow up $B_{\rho(\mathcal{M})}(\Projan \mathcal{R}(N))$ we have
two tautological bundles. One is the pullback of the bundle on
$\Projan \mathcal{R}(N)$. The other comes from $\Projan
\mathcal{R}(M)$. Denote the corresponding Chern classes by $c_{M}$
and $c_{N}$, and denote the exceptional divisor by $D_{M,N}$.
Suppose the generic rank of $N$ (and hence of $M$) is $g$.

Then the multiplicity of a pair of modules $M, N$ is:
$$
e(M,N) = \sum_{j=0}^{d+g-2}\int D_{M,N}\cdot c_{M}^{d+g-2-j}\cdot
c_{N}^{j}.
$$

Kleiman and Thorup show that this multiplicity is well defined at $x
\in X$ as long as $\overline{M} = \overline{N}$ on a deleted
neighborhood of $x$. This condition implies that $D_{M,N}$ lies in
the fiber over $x$, hence is compact. Notice that when $N=F$ and $M$ has finite colength in $F$ then $e(M,N)$ is the Buchsbaum--Rim multiplicity $e(M,\mathcal{O}_{X,x}^{p})$. 

There is a fundamental result due to Kleiman and Thorup, the principle of additivity \cite{KT}, which states that given a sequence of $\mathcal{O}_{X,x}$-modules $M\subset N \subset P$  such that the multiplicity of the pairs is well defined, then$$e(M,P)=e(M,N)+e(N,P).$$Also if $\overline{M}=\overline{N}$ then $e(M,N)=0$ and the converse also holds if $X$ is equidimensional. Combining these two results we get that if $\overline{M}=\overline{N}$ then $e(M,P)=e(N,P)$.

In studying the geometry of singular spaces, it is natural to study
pairs of modules. In dealing with non-isolated singularities, the
modules that describe the geometry have non-finite colength, so
their multiplicity is not defined. Instead, it is possible to define
a decreasing sequence of modules, each with finite colength inside
its predecessor, when restricted to a suitable complementary plane.
Each pair controls the geometry in a particular codimension.

As mentioned before, we need now the notion of the polar varieties of $M$. The \textit{polar variety of codimension $k$} of $M$ in $X$, denoted
$P_k(M)$, is constructed by intersecting $\Projan{\mathcal R}(M)$
 with $X\times H_{g+k-1}$ where
$H_{g+k-1}$ is a general plane of codimension $g+k-1$, then projecting to
$X$.

\medskip

\paragraph*{Setup:} We suppose we have families  of modules $M\subset  N$, $M$ and $N$
 submodules of a free module $F$ of rank $p$
 on an equidimensional family of spaces with equidimensional
 fibers ${\mathcal X}^{d+k}$, ${\mathcal X}$ a family over a smooth base
$Y^k$. We assume that the generic rank of $M$, $N$ is $g \le p$.  Let
$P(M)$ denote $\Projan {\mathcal R}(M)$, $\pi_M$
 the projection to ${\mathcal X}$.

We will be interested in computing, as we move from the special point $0$ to a generic point, the change in the multiplicity of
the pair $(M,N)$, denoted $\Delta(e(M,N))$. We will assume that the integral closures of $M$ and $N$ agree off a set $C$ of dimension
$k$ which is finite over $Y$, and assume we are working on a
sufficiently small neighborhood of the origin, so that every component
of $C$ contains the origin in its closure. Then $e(M,N, y)$ is the
sum of the multiplicities of the pair at all points in the fiber of
$C$ over $y$, and $\Delta(e(M,N))$ is the change in this number from
$0$ to a generic value of $y$. If we have a set $S$ which is finite
over $Y$, then we can project $S$ to $Y$, and the degree of the
branched cover at $0$ is $\operatorname{mult}_{y} S$ (of course, this is just the
number of points in the fiber of $S$ over our generic $y$).

Let $C(M)$ denote the locus of points where $M$ is not free, \textit{i.e.}, the
points where the rank of $M$ is less
than $g$, $C(\Projan {\mathcal R}(M))$
its inverse image under $\pi_M$.

We can now state the Multiplicity Polar Theorem. The proof in the ideal case appears in \cite{Gaff1}; the general proof appears in \cite{Gaff}.

\begin{theorem}[Multiplicity Polar Theorem]\label{MPT} Suppose in the above
setup we have that $\overline{M} = \overline{N}$ off a set $C$ of
dimension $k$ which is finite over $Y$. Suppose further that
$C(\Projan\mathcal{R}(M))(0) = C(\Projan\mathcal{R}(M(0)))$ except
possibly at the points which project to $0 \in \mathcal{X}(0)$.
Then, for $y$ a generic point of $Y$, $$\Delta(e(M,N)) =
\operatorname{mult}_{y}P_{d}(M) - \operatorname{mult}_{y}P_{d}(N)$$
where ${\mathcal X}(0)$ is the fiber over $0$ of the family ${\mathcal X}^{d+k}$, $C(\Projan\mathcal{R}(M))(0)$ is the fiber of $C(\Projan\mathcal{R}(M))$ over $0$ and $M(0)$ is the restriction of the module $M$ to  ${\mathcal X}(0)$. \end{theorem}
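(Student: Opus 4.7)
The plan is to prove the Multiplicity Polar Theorem by combining a conservation-of-number argument with an intersection-theoretic computation on the blowup of $\Projan \mathcal{R}(N)$ along the ideal $\rho(\mathcal{M})$. The essential idea is that $e(M,N)$ is a sum of local intersection numbers supported along $C$, and as one moves from the special fiber to a generic fiber of $Y$, the change in this sum is captured by the net ``migration'' of intersection points into the top-dimensional polar variety.

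First I would work over the family: let $P(N) = \Projan \mathcal{R}(N)$ with projection $c\colon P(N) \to \mathcal{X}$, and let $B \to P(N)$ be the blowup of $\rho(\mathcal{M})$ with exceptional divisor $D_{M,N}$. For each $y \in Y$, the fiber $B(y)$ carries the bundles pulling back to $c_{M(y)}$ and $c_{N(y)}$, and the Chern-class integral defining $e(M(y),N(y))$ can be assembled into a relative integral over $B \to Y$. The hypothesis $\overline{M} = \overline{N}$ off $C$, with $C$ finite over $Y$, ensures that $D_{M,N}(y)$ is compact and lies above $C(y)$, so $e(M,N)(y)$ is well defined for each $y$; the quantity $\Delta(e(M,N)) = e(M,N)(0) - e(M,N)(y_{\mathrm{gen}})$ is then what we must identify.

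Next, I would reduce the problem via slicing with generic planes. By intersecting $P(M)$ with $\mathcal{X} \times H_{g+d-1}$ and projecting to $\mathcal{X}$, we obtain $P_d(M)$; a general-position argument shows $P_d(M) \to Y$ is finite at the generic point, so $\operatorname{mult}_y P_d(M)$ is well defined. The second technical hypothesis, $C(\Projan \mathcal{R}(M))(0) = C(\Projan \mathcal{R}(M(0)))$ away from the fiber over $0 \in \mathcal{X}(0)$, is precisely what rules out spurious specialization: no extra component of $P(M)$ collapses into the zero fiber that is not already a polar component of $M(0)$, so the degrees of the polar varieties in the family behave continuously.

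The heart of the argument is then an intersection identity on $B$. Using the Kleiman--Thorup formula $e(M,N) = \sum_j \int D_{M,N} \cdot c_M^{d+g-2-j} c_N^j$, one rewrites $\Delta(e(M,N))$ as the difference of relative degrees of certain Chern monomials on $B \to Y$. Pushing forward first to $\mathcal{X}$ and then to $Y$ via the projection formula, and repeatedly using the additivity principle $e(M,P) = e(M,N) + e(N,P)$ applied to the chain $M \subset N \subset F$, the middle-dimensional polar contributions telescope and cancel in pairs. What survives is exactly the top-dimensional piece $\operatorname{mult}_y P_d(M) - \operatorname{mult}_y P_d(N)$.

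The main obstacle, and where most of the real work lies, is justifying this telescoping cancellation: one must verify that the lower-codimensional polar contributions of $M$ and of $N$ agree generically in the family, so that only the top polar pieces can contribute to $\Delta(e(M,N))$. This depends delicately on the genericity of the slicing planes and on a careful application of Kleiman--Thorup additivity combined with the hypothesis that $\overline{M} = \overline{N}$ off $C$ (which forces the relevant Segre classes to agree away from the exceptional locus). Handling this cancellation in full generality, rather than only for ideals, is what separates the ideal case treated in \cite{Gaff1} from the general module case in \cite{Gaff}.
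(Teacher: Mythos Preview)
The paper does not actually prove the Multiplicity Polar Theorem. Immediately before stating it, the authors write: ``The proof in the ideal case appears in \cite{Gaff1}; the general proof appears in \cite{Gaff}.'' The theorem is quoted in the Appendix purely as a background tool, with no argument given in this paper. Consequently there is no ``paper's own proof'' to compare your proposal against.

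As to your sketch on its own merits: the overall architecture is reasonable --- working on $B_{\rho(\mathcal{M})}(\Projan\mathcal{R}(N))$, using the Kleiman--Thorup intersection formula for $e(M,N)$, invoking additivity, and tracking how polar degrees specialize via the hypothesis on $C(\Projan\mathcal{R}(M))(0)$ --- and this is indeed the circle of ideas in Gaffney's original papers. However, the step you yourself flag as the ``main obstacle,'' the telescoping cancellation of lower-codimensional polar contributions, is not actually argued; you assert that additivity plus $\overline{M}=\overline{N}$ off $C$ forces the relevant Segre classes to agree, but you do not say which Segre classes, on which space, or why the cancellation leaves exactly the codimension-$d$ piece. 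That is the substantive content of the theorem, and your proposal does not supply it. If you want to pursue this, you should consult \cite{Gaff} directly rather than this paper.
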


%\section*{Acknowledgments}

%The authors are grateful to Jonathan Mboyo Esole, Thiago de Melo and Xiping Zhang for their careful reading and for suggesting improvements in this work.

\end{document}